\DeclareMathOperator{\rank}{rank}
\DeclareMathOperator{\dist}{dist}
\DeclareMathOperator{\supp}{supp}
\DeclareMathOperator{\diam}{diam}
\DeclareMathOperator{\spn}{span}
\DeclareMathOperator{\gr}{Gr}
\newtheorem{theorem}{Theorem}[section]
\newtheorem{lemma}[theorem]{Lemma}
\newtheorem{proposition}[theorem]{Proposition}
\newtheorem{problem}[theorem]{Problem}
\theoremstyle{remark}\newtheorem{remark}[theorem]{Remark}
\theoremstyle{remark}\newtheorem{claim}[theorem]{Claim}
\numberwithin{equation}{section}
\DeclareFontFamily{U}{mathx}{\hyphenchar\font45}
\DeclareFontShape{U}{mathx}{m}{n}{
      <5> <6> <7> <8> <9> <10>
      <10.95> <12> <14.4> <17.28> <20.74> <24.88>
      mathx10
      }{}
\DeclareSymbolFont{mathx}{U}{mathx}{m}{n}
\DeclareMathAccent{\widecheck}{0}{mathx}{"71}
\DeclareMathAccent{\wideparen}{0}{mathx}{"75}
\title[Improved bounds for restricted projection families]{Improved bounds for restricted projection families via weighted Fourier restriction}
\author[T.~L.~J.~Harris]{Terence L.~J.~Harris}
\address{Department of Mathematics, University of Illinois, Urbana, IL 61801, USA}
\address{Department of Mathematics, Cornell University, Ithaca, NY 14853, USA}
\email{tlh236@cornell.edu}
\subjclass[2010]{42B10; 28E99}
\keywords{Orthogonal projections, Hausdorff dimension, Fourier transform}
\thanks{This material is based upon work partially supported by the National Science Foundation under Grant No. DMS-1501041. I would like to thank Burak Erdoğan for advice, and for financial support.}
\begin{document}
\begin{abstract} \begin{sloppypar} It is shown that if $A \subseteq \mathbb{R}^3$ is a Borel set of Hausdorff dimension $\dim A \in (3/2,5/2)$, then for a.e.~$\theta \in [0,2\pi)$ the projection $\pi_{\theta}(A)$ of $A$ onto the 2-dimensional plane orthogonal to $\frac{1}{\sqrt{2}}(\cos \theta, \sin \theta, 1)$ satisfies $\dim \pi_{\theta}(A) \geq \max\left\{\frac{4\dim A}{9} +  \frac{5}{6},\frac{2\dim A+1}{3} \right\}$. This improves the bounds of Oberlin-Oberlin~\cite{oberlin}, and of Orponen-Venieri~\cite{venieri}, for $\dim A \in (3/2,5/2)$. More generally, a weaker lower bound is given for %two possible generalisations of this problem in higher dimensions, and for 
families of planes in $\mathbb{R}^3$ parametrised by curves in $S^2$ with nonvanishing geodesic curvature.  %One of the lower bounds given is for dimension distortion under projections onto $\left(\frac{\gamma(\theta)}{\sqrt{2}},\frac{1}{\sqrt{2}}\right)^{\perp}$, where $\gamma$ is any smooth curve in $S^{d-1}$ such that $\det\left(\gamma, \gamma', \dotsc, \gamma^{(d-1)}\right)$ is nonvanishing. 
\end{sloppypar} \end{abstract}
\maketitle
\section{Introduction} 
\begin{sloppypar} This article gives improved a.e.~lower bounds for Hausdorff dimension under ``restricted'' families of orthogonal projections. The behaviour of Hausdorff dimension under orthogonal projections was first studied in 1954 by Marstrand~\cite{marstrand2}, who showed that if $A$ is a Borel set in the plane, then for $0 \leq \dim A \leq 1$ the projection of $A$ onto a.e.~line through the origin has dimension equal to $\dim A$, and if $\dim A >1$ then the projection of $A$ onto a.e.~line through the origin has positive length. This was generalised to projections onto $k$-planes in $\mathbb{R}^n$ by Mattila~\cite{mattila5}, with respect to the natural rotation invariant probability measure on the Grassmannian. Somewhat more recently, questions of this type were studied for lower dimensional submanifolds of the Grassmannian~\cite{jarvenpaa1, jarvenpaa2,fasslerorponen14,orponen,oberlin,chen,kaenmaki,venieri}, in which case the problem is more difficult. Sets of projections corresponding to planes in lower dimensional subsets of the Grassmannian are referred to as ``restricted projection families''.

The first result given here is for a 1-dimensional family of 2-dimensional planes in $\mathbb{R}^3$. To state it, let $\pi_{\theta}$ be orthogonal projection onto the orthogonal complement of $\frac{1}{\sqrt{2}}(\cos \theta, \sin \theta, 1)$ in $\mathbb{R}^3$, and denote the Hausdorff dimension of a set $A$ by $\dim A$. A subset of a complete separable metric space $X$ is called analytic if it is the continuous image of a Borel subset of $Y$, for some complete separable metric space $Y$ (in particular, every Borel subset of $X$ is analytic).
\begin{theorem} \label{neat} If $A \subseteq \mathbb{R}^3$ is an analytic set with $\dim A \in (3/2,5/2)$, then 
\[ \dim \pi_{\theta}(A) \geq \max\left\{\frac{4\dim A}{9} +  \frac{5}{6},\frac{2 \dim A+1}{3} \right\}, \]
for a.e.~$\theta \in [0,2\pi)$.  \end{theorem}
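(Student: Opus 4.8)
The plan is to run the Fourier-analytic approach to projection theorems, the point being that the curvature of the circle $\gamma(\theta)=\tfrac1{\sqrt2}(\cos\theta,\sin\theta,1)$ enters through a \emph{weighted} $L^2$ restriction estimate for the light cone. By Frostman's lemma, fix $s<\dim A$ and a compactly supported Borel probability measure $\mu$ on a compact subset of $A$ with $I_s(\mu)=\iint|x-y|^{-s}\,d\mu(x)\,d\mu(y)<\infty$. Since $\dim\pi_\theta(A)\ge t$ whenever $I_t(\pi_\theta\mu)<\infty$, a routine limiting argument (letting $s\uparrow\dim A$ and $t$ up to the claimed maximum along sequences, discarding countably many null sets) shows it suffices to prove that for each admissible $t$ one has $I_t(\pi_\theta\mu)<\infty$ for a.e.~$\theta$. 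Writing $\Pi_\theta\colon\mathbb R^2\to\gamma(\theta)^\perp\subset\mathbb R^3$ for a fixed isometric identification and using the Fourier representation of the Riesz $t$-energy on $\mathbb R^2$ together with Tonelli, this reduces to the single bound
\[
\int_0^{2\pi}\!\!\int_{\mathbb R^2}\bigl|\widehat\mu(\Pi_\theta\xi)\bigr|^2|\xi|^{t-2}\,d\xi\,d\theta<\infty .
\]

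Next I would change variables from $(\theta,\xi)$ to $z=\Pi_\theta\xi\in\mathbb R^3$. Writing $\xi=\lambda\eta$ with $\eta\in S^1$ and noting that $\Pi_\theta\eta$ sweeps out $S^2\cap\gamma(\theta)^\perp$, one checks that $z/|z|$ covers the slab $\{z_1^2+z_2^2\ge z_3^2\}\cap S^2$ with bounded multiplicity, and a direct Jacobian computation shows the induced density is comparable to $\dist(z/|z|,\mathcal C\cap S^2)^{-1/2}$, where $\mathcal C=\{z_1^2+z_2^2=z_3^2\}$ is the light cone --- this is the cone over $\pm\gamma([0,2\pi))$, and it is at this point that the precise circle is used. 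Hence the displayed integral is comparable to $\int_{\mathbb R^3}|\widehat\mu(z)|^2|z|^{t-3}\dist(z/|z|,\mathcal C\cap S^2)^{-1/2}\,dz$, and after a dyadic decomposition into the pieces $|z|\sim R$, $\dist(z,\mathcal C)\sim\rho$ (with $1\le\rho\lesssim R$, where the angular weight is $\approx(R/\rho)^{1/2}$), the problem becomes
\[
\sum_{R}\sum_{1\le\rho\lesssim R}R^{\,t-5/2}\rho^{-1/2}\!\int_{\mathcal N_\rho(\mathcal C)\cap\{|z|\sim R\}}\!|\widehat\mu(z)|^2\,dz<\infty .
\]

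The heart of the matter is therefore an upper bound for the $L^2$ mass of $\widehat\mu$ on a $\rho$-neighbourhood of a truncated light cone at scale $R$, in terms of the Frostman exponent $s$ --- a weighted Fourier restriction estimate for the cone in $\mathbb R^3$. The curvature-free bound $\int_{|z|\lesssim R}|\widehat\mu|^2\lesssim R^{3-s}$ only yields $\dim\pi_\theta(A)\ge\dim A-\tfrac12$, so the improvement must come from decomposing $\mathcal N_\rho(\mathcal C)$ into $\sim(R/\rho)^{1/2}$ curvature plates of dimensions $\sim R\times\sqrt{R\rho}\times\rho$, estimating $\widehat\mu$ on each plate by $TT^*$ against Frostman ball/tube bounds for $\mu$ (equivalently, via $\ell^2$-decoupling or the square function estimate for the cone together with the Frostman condition), and then summing in $\rho$ and $R$. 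I expect that two such restriction-type inputs are needed, valid in complementary ranges --- a more elementary $L^2$ one and a sharper curvature-driven one --- and that optimising their exponents against the weight $\rho^{-1/2}$ and the power $R^{t-5/2}$, and then taking the larger of the two resulting thresholds, is what produces the maximum of $\tfrac{4\dim A}{9}+\tfrac56$ and $\tfrac{2\dim A+1}{3}$ in the statement.

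The step I expect to be the main obstacle is precisely this weighted cone restriction estimate: the weight $\rho^{-1/2}$ is ``anti-curvature'', concentrated exactly where the change of variables degenerates, so the curvature gain of $\mathcal C$ has to be strong and quantitatively sharp enough to absorb it; moreover the endpoint scales $\rho\sim1$ and $\rho\sim R$, where the plate decomposition and the summation are most delicate, must be handled with care. Finally, the more general statement for curves in $S^2$ with nonvanishing geodesic curvature (alluded to in the abstract) requires replacing $\mathcal C$ on each dyadic block by an osculating cone, which costs in the exponents and is why only a weaker lower bound is available in that generality.
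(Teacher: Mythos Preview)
Your outline is essentially the proof of the paper's Theorem~1.3, not Theorem~1.1. The change of variables and the weighted cone restriction input you describe (including the ``localisation'' of $\mu$ to unit balls at scale $\rho=2^{j-k}$, which is what your plate decomposition amounts to) are exactly how the paper obtains the bound $\dim\pi_\theta(A)\ge\tfrac{\dim A+1}{2}$ for general curves with nonvanishing geodesic curvature. That bound is strictly weaker than the one in Theorem~1.1 throughout the range $(3/2,5/2)$, and the paper explains that no purely Fourier-analytic restriction argument is known to close this gap.

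The missing idea is a \emph{good--bad decomposition} of $\mu$. One writes $\mu=\mu_g+\mu_b$, where $\mu_b$ is the part carried by wave-packet tubes of abnormally large $\mu$-mass. The bad part $\mu_b$ is \emph{not} controlled by any restriction estimate; instead one invokes the geometric lemma of Orponen--Venieri (Lemma~2.2), which is specific to the constant-height circle and says that for an $s$-Frostman measure, few directions $\theta$ can see heavy tubes through a typical point. This yields the exponent $\alpha^*=\max\{\tfrac{\alpha}{3}+1,\alpha-\tfrac12\}$ appearing in the bad-tube threshold. The good part $\mu_g$ is then handled by your Fourier machinery, but with the crucial upgrade that one applies the \emph{refined} Strichartz inequality for the cone (Theorem~2.9, proved via decoupling plus induction on scales), which exploits the tube-mass cap defining ``good'' to gain a factor $(M/|\mathbb W|)^{1/2-1/p}$. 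The two bounds are then combined not through $\int I_t(\pi_\theta\mu)\,d\theta$ but through Liu's $L^2$ method: assume the projection has small dimension, cover it efficiently, and bound $\mathcal H^1$ of the bad-$\theta$ set. Your two ``restriction-type inputs in complementary ranges'' do not exist as such; one of the two inputs is genuinely non-Fourier, and optimising against it is what produces the $\tfrac{4\dim A}{9}+\tfrac56$ term.
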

This improves the previously known bounds if $\dim A \in (3/2,5/2)$, and makes partial progress towards Conjecture~1.6 from~\cite{fasslerorponen14}, for the special curve $\gamma(\theta) = \frac{1}{\sqrt{2}}(\cos \theta, \sin \theta, 1)$. This conjecture asserts that $\dim \pi_{\theta} (A) \geq \min \left\{ \dim A , 2 \right\}$ for a.e.~$\theta$, where $\gamma$ can be any curve with nonvanishing geodesic curvature in $S^2$, and $\pi_{\theta}$ is the projection onto $\gamma(\theta)^{\perp}$. By a rescaling argument (see Lemma~A.1 in~\cite{venieri}), Theorem~\ref{neat} continues to hold if the curve $\frac{1}{\sqrt{2}}(\cos \theta, \sin \theta, 1)$ is replaced by any circle in $S^2$ which is not a great circle. 

The best currently known bounds will be summarised here, omitting some which have since been superseded. The a.e.~lower bound $\dim \pi_{\theta}(A)  \geq \min\left\{\dim A,1 \right\}$ was obtained by Järvenpää, Järvenpää, Ledrappier, and Leikas~\cite{jarvenpaa1}, and still holds if the curve is replaced by any other circle in $S^2$ (even a great circle). For a great circle and for $\dim A \leq 2$ this is the best possible, which can be seen by taking any set of the given dimension contained in the plane of the great circle. 

Oberlin and Oberlin~\cite{oberlin} proved 
\begin{equation} \label{fourier} \begin{aligned} 
\dim \pi_{\theta}(A) &\geq \frac{3\dim A}{4}, && \dim A \in (1,2], \\
\dim \pi_{\theta}(A) &\geq \dim A - \frac{1}{2}, && \dim A \in (2,5/2], \\
\mathcal{H}^2\left( \pi_{\theta}(A) \right) &>0, && \dim A \in (5/2,3], \end{aligned} \end{equation}
for a.e.~$\theta \in [0, 2\pi)$, which (prior to Theorem~\ref{neat}) was the best known a.e.~lower bound for $\dim A \geq 9/4$, whilst the inequality $\dim A > 5/2$ remains the best known sufficient condition that ensures $\mathcal{H}^2\left( \pi_{\theta}(A) \right) >0$ almost everywhere.

Orponen and Venieri~\cite{venieri} proved the sharp a.e.~equality $\dim \pi_{\theta}(A)  = \dim A$ for $\dim A \in (1, 3/2]$, and gave the a.e.~lower bound 
\begin{equation} \label{GMT} \dim \pi_{\theta}(A) \geq 1+ \frac{\dim A}{3}, \quad \dim A \in (3/2, 3]. \end{equation}
Prior to Theorem~\ref{neat}, the lower bound in \eqref{GMT} was the record for $3/2 < \dim A < 9/4$. A comparison between Theorem~\ref{neat} and prior results is shown in Figure \ref{picture}. 

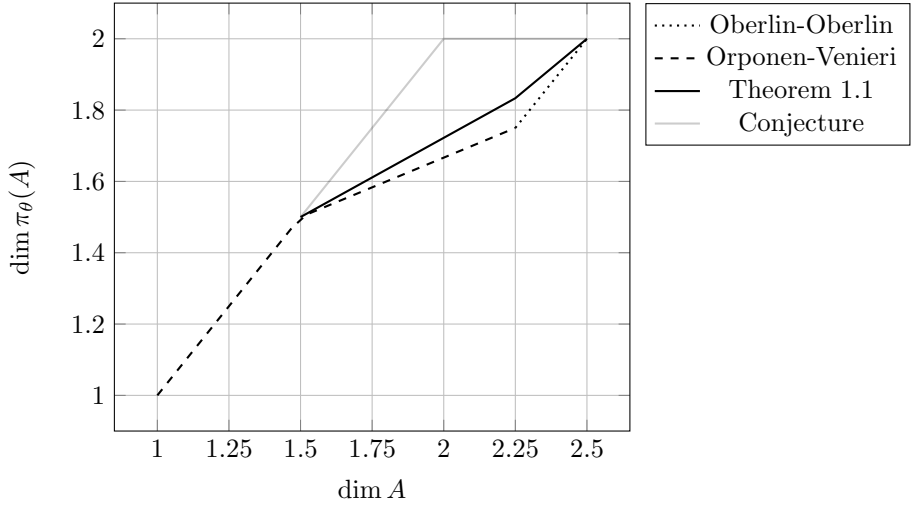
\begin{figure} 
\begin{tikzpicture}[
declare function={
    %func(\x)= (\x<=1.5) * (\x)   +
    %          and(\x>1.5, \x<2.625) * (\x*0.444444+ 0.833333);
   %           and(\x>=2.625, \x<=3) * (2);
	  func(\x)= (\x<=1.5) * (\x)   +
              and(\x>1.5, \x<2.25) * (\x*0.444444+ 0.833333) +
							and(\x>=2.25, \x<2.5) * (\x*0.666666+ 0.333333);
   %           and(\x>=2.625, \x<=3) * (2);
	 % func5(\x)= (\x<=1.5) * (\x)   +
   %           and(\x>1.5, \x<=3) * (\x*0.666666+ 0.333333);
   %           and(\x>=2.625, \x<=3) * (2);
		func2(\x)= (\x<=1.5) * (\x)   +
              and(\x>1.5, \x<=3) * (1+\x*0.333333);
		func3(\x)= (\x<=2) * (\x)   +
              and(\x>2, \x<=3) * (2);
		func4(\x)= (\x<=2) * (0.75*\x)   +
              and(\x>2, \x<=2.5) * (\x-0.5)		+
							and(\x>2.5, \x<=3) * (2);
	%	func4(\x)= (\x<=1) * (0.75*\x)   +
  %            and(\x>2, \x<=2.5) * (\x-0.5)		+
	%						and(\x>2.5, \x<=3) * (2);
  }]
  \begin{axis}[ 
	grid=major,
	xtick distance=0.25,
    xlabel=$\dim A$,
    ylabel={$\dim \pi_{\theta} (A)$},
		legend style={
legend pos=outer north east,
}
  ] 
 %\addplot[loosely dashed,thick,domain=0:1]{x}; 
 \addplot[dotted,thick,domain=2.25:2.5]{func4(x)}; 
 \addplot[dashed,thick,domain=1:2.25]{func2(x)}; 
 \addplot[black,thick,domain=1.5:2.5]{func(x)}; 
 %\addplot[densely dotted,thick,domain=2.25:2.5]{func5(x)}; 
 %\addplot[black,thick,domain=2.25:2.5]{func5(x)}; 
 \addplot[black,thick,opacity = 0.2, domain=1.5:2.5]{func3(x)};
 %\node [black] at (125,833) {\textbullet};
 %\node [black] at (125,750) {\textbullet};
\legend{Oberlin-Oberlin, Orponen-Venieri, Theorem~\ref{neat},Conjecture}
  \end{axis}
\end{tikzpicture}
\caption{The conjectured and best known a.e.~lower bounds for $\dim \pi_{\theta}(A)$, with $\dim A \in (1,5/2)$.}
\label{picture}
\end{figure}

The proof of Theorem~\ref{neat} uses the decomposition of a fractal measure into ``good'' and ``bad'' parts, and a wave version of the ``refined Strichartz inequality'', both recently used on the planar distance set problem in~\cite{GIOW}. The idea of removing the ``bad'' part of a measure also featured earlier in~\cite{shmerkin,orponen2}. Here the ``bad'' part is bounded using the key lemma from Orponen and Venieri's proof of \eqref{GMT}, whereas the ``good'' part is bounded using the refined Strichartz inequality, and by augmenting the Fourier-analytic approach of Oberlin-Oberlin with an ``improvement due to localisation'' technique (which I learnt from~\cite[Lemma~2.3]{cho}; it is used here in \eqref{restrictwavepackets}--\eqref{offdiag3} and \eqref{diag20}--\eqref{offdiag20}). Finally, these two bounds are converted to a projection theorem by adapting Liu's $L^2$-method from~\cite{liu}. For more information about the method, the intuition behind the proof is explained in \cite[pp.~8--13]{thesis}.

The lower bound \eqref{fourier} of Oberlin and Oberlin holds more generally for planes parametrised by curves in $S^2$ with nonvanishing geodesic curvature. The proof of the Orponen-Venieri lemma relies crucially on the constant height property of the curve $\frac{1}{\sqrt{2}}(\cos \theta, \sin \theta, 1)$, so the ``good-bad'' decomposition would likely not yield the same bound for more general curves (although see Problem~\ref{open}). In this more general setting, the following theorem gives an improvement to the lower bound \eqref{fourier} in the intermediate range; the proof uses the aforementioned ``localisation'' technique, without the ``good-bad'' decomposition (see Subsection~\ref{notation} for notation).

\begin{theorem} \label{curvature} Let $\gamma: [a,b] \to S^2$ be a $C^2$ curve with $\det\left(\gamma,\gamma', \gamma''\right)$ nonvanishing, and let $\pi_{\theta}= \pi_{\theta,\gamma}$ be projection onto $\gamma(\theta)^{\perp}$.

Fix $\alpha \in (0, 3)$, and let $\nu$ be a Borel measure on $\mathbb{R}^3$ with $\diam \supp \nu \leq C$. If $\alpha \leq 5/2$ and $0 \leq t< \min\left\{ \alpha, \max\left\{ \frac{1+\alpha}{2}, \alpha - \frac{1}{2} \right\} \right\}$, then
\begin{equation} \label{energyone} \int_a^b I_t\left( \pi_{\theta \#} \nu \right) \, d\theta \lesssim_{\alpha,t,C} c_{\alpha}(\nu) \nu\left( \mathbb{R}^3 \right), \end{equation}
and if $\alpha > 5/2$, then 
\begin{equation} \label{energytwo} \int_a^b \left\lVert \pi_{\theta \#} \nu  \right\rVert_{L^2(\mathbb{R}^3, \mathcal{H}^2 )}^2 \, d\theta \lesssim_{\alpha,C} c_{\alpha}(\nu) \nu\left( \mathbb{R}^3 \right).  \end{equation}Consequently, for any analytic subset $A$ of $\mathbb{R}^3$,
\begin{equation} \label{introref} \begin{aligned}  \dim \pi_{\theta} (A) &= \dim A, \quad && \dim A \in [0,1],  \\
\dim \pi_{\theta} (A) &\geq \frac{\dim A +1}{2}, \quad && \dim A \in (1,2],  \\
\dim \pi_{\theta} (A) &\geq \dim A - \frac{1}{2}, \quad && \dim A \in (2,5/2],  \\
\mathcal{H}^{2}(\pi_{\theta} (A)) &>0,  \quad && \dim A > 5/2, \end{aligned} \end{equation}
for a.e.~$\theta \in [a,b]$.
\end{theorem}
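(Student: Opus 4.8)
The plan is to prove the two integral inequalities \eqref{energyone} and \eqref{energytwo}, and then derive \eqref{introref} by a standard potential-theoretic argument. Start with the reductions familiar from this circle of ideas: by a partition of unity and the $C^2$ hypothesis it suffices to treat a short subarc of $\gamma$ on which, after an affine reparametrisation and a linear change of coordinates in $\mathbb{R}^3$, the curve is a small perturbation of the model $\frac{1}{\sqrt 2}(\cos\theta,\sin\theta,1)$ or, more conveniently, of the parabola-type normal $\gamma(\theta)^{\perp} = \spn\{(1,\theta,\theta^2),(0,1,\theta)\}$ up to normalisation, with the nondegeneracy $\det(\gamma,\gamma',\gamma'')$ bounded away from zero playing the role of ``nonvanishing curvature''. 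Writing $\pi_{\theta\#}\nu$ via its Fourier transform, one gets $\widehat{\pi_{\theta\#}\nu}(\xi) = \widehat{\nu}(\xi_1 e_1(\theta) + \xi_2 e_2(\theta))$ for an orthonormal frame $e_1(\theta),e_2(\theta)$ of $\gamma(\theta)^{\perp}$, so that the relevant integrals become integrals of $|\widehat\nu|^2$ over the two-parameter family of planes $\{\gamma(\theta)^{\perp}\}$, i.e. a $\theta$-averaged version of the quantity controlled by Fourier restriction/extension estimates for the cone or the light cone generated by $\gamma$.

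Next, I would set up the main Fourier-analytic estimate. Dualising, \eqref{energyone} for a given $t$ reduces to a bound of the shape
\begin{equation*}
\int_a^b \int_{\mathbb{R}^2} |\widehat{\pi_{\theta\#}\nu}(\xi)|^2 |\xi|^{t-2}\, d\xi\, d\theta \lesssim c_\alpha(\nu)\,\nu(\mathbb{R}^3),
\end{equation*}
and after inserting a dyadic decomposition $|\xi|\sim R$ and using $c_\alpha(\nu)<\infty$ to control $\int |\widehat\nu|^2\,$ on balls, the task is to show that the ``plate'' operator sending $\nu$ to its restriction to the $\sim R^{-1}$-neighbourhood of the cone over $\gamma$ loses only $R^{\max\{t-\alpha, \text{(something)}\}+\varepsilon}$. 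This is exactly where the ``improvement due to localisation'' technique from \cite[Lemma~2.3]{cho} enters: decompose the relevant annular piece of $\widehat\nu$ into wave packets adapted to $R^{-1/2}\times R^{-1/2}\times R^{-1}$ plates tangent to the cone, organise them by the tube they live over in physical space, and exploit that on a ball of radius $R$ the interaction between non-adjacent plate directions is essentially diagonal, gaining a factor over the trivial $L^2$ orthogonality bound. Summing the off-diagonal contributions (the steps the introduction flags as \eqref{restrictwavepackets}--\eqref{offdiag3} and \eqref{diag20}--\eqref{offdiag20}) and optimising the exponent produces the threshold $t < \max\{\frac{1+\alpha}{2}, \alpha - \frac12\}$ for $\alpha \le 5/2$; the case $\alpha > 5/2$, where one wants an honest $L^2(\mathcal{H}^2)$ bound rather than an energy bound, is the endpoint version and follows along the same lines with $t=2$.

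Finally I would convert the integral bounds into the dimension statements \eqref{introref} by the now-standard argument (Liu's $L^2$-method, as adapted in \cite{liu}, or the classical Falconer--Mattila scheme): given an analytic $A$ with $\dim A > s$, Frostman gives a probability measure $\nu$ supported on a compact subset of $A$ with $c_s(\nu)<\infty$; applying \eqref{energyone} with $\alpha = s$ and any $t < \min\{s,\max\{\frac{1+s}{2}, s-\frac12\}\}$ shows $I_t(\pi_{\theta\#}\nu)<\infty$ for a.e.\ $\theta$, hence $\dim\pi_\theta(A) \ge t$ for a.e.\ $\theta$; letting $t \uparrow$ its supremum and $s \uparrow \dim A$ along countable sequences yields the stated lower bounds for $\dim A \le 5/2$, and \eqref{energytwo} gives $\pi_{\theta\#}\nu \in L^2(\mathcal{H}^2)$ for a.e.\ $\theta$ when $\dim A > 5/2$, forcing $\mathcal{H}^2(\pi_\theta(A))>0$. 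The $\dim A \le 1$ equality is immediate from the general lower bound $\dim\pi_\theta(A)\ge\min\{\dim A,1\}$ of \cite{jarvenpaa1} together with the trivial upper bound $\dim\pi_\theta(A)\le\dim A$. The main obstacle is the wave-packet bookkeeping in the localisation step: one must track the plate geometry over the cone carefully enough to see that non-adjacent plates genuinely decouple on balls of radius $R$, and that the resulting gain is uniform in the perturbation of $\gamma$ away from the model curve, using only $C^2$ regularity and the quantitative nonvanishing of $\det(\gamma,\gamma',\gamma'')$ — this is the only place where curvature, as opposed to mere non-degeneracy of the frame, is used.
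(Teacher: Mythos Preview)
Your proposal is essentially correct and follows the paper's approach: express the averaged energies via the Fourier transform, split into regions near and far from the cone generated by $\gamma$, and use the localisation trick from \cite{cho} (precisely the steps \eqref{diag20}--\eqref{offdiag20} you cite) to gain $+\tfrac{1}{2}$ over the raw restriction exponent, then run the standard Frostman/energy argument. The paper packages this slightly differently---it proves an abstract Proposition~\ref{proposition} for pairs $(F,G)$, takes the Oberlin--Oberlin restriction estimate (Theorem~\ref{torsionfourier}) as a black box for $\beta(\alpha)$, and then just verifies the rank and determinant hypotheses for $F=\gamma'$, $G=\gamma\times\gamma'$---but the content is the same. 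One small correction: the localisation used here is \emph{spatial} (partition the rescaled measure $\mu_\rho$ into pieces on unit balls $B_m$ and use physical-space separation for off-diagonal decay), not a frequency-side decomposition into plates by direction; the full wave-packet machinery you describe is needed for Theorem~\ref{neat} but is overkill for this result.
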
 The second part \eqref{introref} of Theorem~\ref{curvature} improves the bound of Oberlin-Oberlin (see \eqref{fourier}) and also Orponen's bound from~\cite[Theorem~1.8]{orponen} in the range $1 < \dim A < 2$ (the bound from~\cite{orponen} is qualitative; given as $1+\sigma(\dim A)$ for an unspecified positive function $\sigma$ of $\dim A >1$).  The following (open) problem is suggested by Theorem~\ref{curvature} and the method of proof in Theorem~\ref{neat}.
\begin{problem} \label{open} Let $\gamma$ be a smooth curve in $S^2$ with nonvanishing geodesic curvature, and let $A \subseteq \mathbb{R}^3$ be an analytic set with $\dim A < 5/2$. Does the a.e.~inequality
\[ \dim \pi_{\theta} (A) \geq \min\left\{ \dim A, \max\left\{ \frac{\dim A}{2} + \frac{2}{3}, \frac{2\dim A + 1}{3} \right\} \right\}, \]
necessarily hold?
\end{problem}
A positive answer to Problem~\ref{open} would verify Conjecture~1.6 from~\cite{fasslerorponen14} for $\dim A \in (1,4/3]$. The decoupling theorem is known to hold for general conical surfaces; see~\cite[Exercise~12.5]{demeter}. 

Theorem~\ref{neat} will be proved in Section~\ref{sellingpoint}. In Section~\ref{sectFourier}, Theorem~\ref{curvature} and Proposition~\ref{projection} will both be deduced as consequences of the more general Proposition~\ref{proposition}, the proof of which occupies most of Section~\ref{sectFourier}. %Theorem~\ref{projection2} will be proved in Section~\ref{orponen-venieri-method}. 

\subsection{Notation} \label{notation}
 Given a set $E$ in Euclidean space, let $\mathcal{N}_{\delta}(E)$ be the open $\delta$-neighbourhood of $E$. If $E$ is a box, let $CE$ be the box with the same centre but with side lengths scaled by $C$. Let $\mathcal{H}^s$ be the $s$-dimensional Hausdorff measure in Euclidean space. For $\alpha \geq 0$ and a positive Borel measure $\mu$ supported in the unit ball of $\mathbb{R}^{d+1}$, define 
\[ c_{\alpha}(\mu)  = \sup_{\substack{x \in \mathbb{R}^{d+1} \\
r>0 } } \frac{ \mu(B(x,r)) }{r^{\alpha}}. \]
By Frostman's Lemma (see~\cite{davies,howroyd,mattila3}), the Hausdorff dimension $\dim B$ of a Borel (or analytic) set $B \subseteq \mathbb{R}^{d+1}$ is the supremum over all $\alpha \in [0,d+1]$ for which there exists a nonzero Borel measure $\mu$ supported on $B$ with finite $c_{\alpha}(\mu)$. Hausdorff dimension can also be characterised through the energy
\[ I_{\alpha}(\mu) := \int \int \lvert x-y\rvert^{-\alpha} \, d\mu(x) \,d\mu(y) = c_{\alpha, d} \int_{\mathbb{R}^{d+1}} \lvert \xi\rvert^{\alpha-(d+1)} \left\lvert\widehat{\mu}(\xi)\right\rvert^2 \, d\xi, \]
where $\alpha \in (0,d+1)$; the last integral is called the Fourier energy of $\mu$~\cite[Theorem 3.10]{mattila4}. For $s \in \mathbb{R}$, define the homogeneous Sobolev norms by
\[ \lVert \mu\rVert_{\dot{H}^s}^2 = \lVert \mu\rVert_{\dot{H}^s\left( \mathbb{R}^{d+1} \right)}^2 = \int_{\mathbb{R}^{d+1}} \lvert \xi\rvert^{2s} \left\lvert\widehat{\mu}(\xi)\right\rvert^2 \, d\xi,  \]
so that $ \lVert \mu\rVert_{\dot{H}^{\frac{\alpha-(d+1)}{2}}}^2$ and $I_{\alpha}(\mu)$ are equivalent for $\alpha \in (0,d+1)$.  A Borel measure $\mu$ with $c_{\alpha}(\mu) < \infty$ satisfies $I_s(\mu) < \infty$ for any $s< \alpha$, and if $I_{\alpha}(\mu)<\infty$ then $0< c_{\alpha}\left(\mu\restriction_A\right) < \infty$ for some Borel set $A \subseteq \supp \mu$; see e.g.~\cite[Chapter~2]{mattila4}.

Given measurable spaces $X$, $Y$, a measure $\mu$ on $X$ and a measurable function $f: X \to Y$, the pushforward measure $f_{\#}\mu$ on $Y$ is defined by $(f_{\#}\mu)(E) = \mu\left(f^{-1}(E) \right)$. The definition is the same if $\mu$ is a complex measure.  

Given $d \geq 2$, a set $\Omega \subseteq \mathbb{R}^{d-1}$ and a function $G:  \Omega \to  S^d$, let 
\begin{equation} \label{Gammadefn} \Gamma_R(G) = \{ \rho G(y) :  y \in \Omega, \, R/2 \leq \rho \leq R\}, \quad R >0, \end{equation}
and let $\Gamma(G) = \Gamma_1(G)$. 

\section{Projections onto the planes \texorpdfstring{$(\cos \theta, \sin \theta, 1)^{\perp}$}{(cos, sin, 1)-perp}}

\label{sellingpoint}

\subsection{Setup and preliminaries}

\label{subsectionBl}

Define $\gamma: [0,2\pi) \to S^2$ by $\gamma(\theta) = \frac{1}{\sqrt{2}}(\cos \theta, \sin \theta, 1)$, and let $\pi_{\theta}: \mathbb{R}^3 \to \gamma(\theta)^{\perp}$ be orthogonal projection onto the 2-dimensional plane $\gamma(\theta)^{\perp} \subseteq \mathbb{R}^3$. For each $\theta$, an orthonormal basis for $\gamma(\theta)^{\perp}$ is 
\[ \left\{  \sqrt{2}\gamma'(\theta),  \gamma(\theta) \times \left(\sqrt{2}\gamma'(\theta)\right) \right\} = \left\{(-\sin \theta, \cos \theta, 0 ), \frac{1}{\sqrt{2}} (-\cos \theta, -\sin \theta, 1 ) \right\}. \]
Let $\lvert \cdot \rvert = \lvert  \cdot \rvert_{\bmod 2\pi}$ be the distance on $[0, 2\pi)$ which naturally identifies this interval with the unit circle: 
\[ \lvert  x-y \rvert_{\bmod 2\pi} := \dist((x-y)\bmod 2\pi , \{0,2\pi\} ). \]

For any parameter $K \geq 1$, the truncated cone $\Gamma_1=\Gamma_1(\gamma)$ has a finitely overlapping covering by boxes of dimensions $K^{-1} \times 1 \times K^{-2}$; these will be referred to as the ``standard'' $K^{-1}$-boxes or caps. They come from the standard covering of $S^1$ by rectangles of dimensions $K^{-1} \times K^{-2}$. 

The notation used for the wave packet decomposition here will be similar to that from~\cite{GIOW} for ease of comparison. Let $\epsilon$ be a very small number, which will be sent to zero at the end of the proof.  Let $\Gamma_{\mathbb{R}}$ be the entire light cone with both forward and backward parts:
\[ \Gamma_{\mathbb{R}} = \left\{ \lambda \gamma(\theta) : \theta \in [0,2\pi), \, \lambda \in \mathbb{R} \right\}. \]
Fix a large positive integer $J$ to be chosen later. For each integer $j$ let $\Gamma_{2^j} = \Gamma_{2^j}(\gamma)$ as defined in~\eqref{Gammadefn}. For each $j \geq J$ and $0 \leq k < j$, construct a finitely overlapping cover of 
\[ \left[\mathcal{N}_{2^{j-k}}(\Gamma_{2^j} \cup -\Gamma_{2^j}) \setminus \mathcal{N}_{2^{j-k-1}}\left(\Gamma_{\mathbb{R}}\right) \right] \] by boxes $\tau=\tau_{j,k}$ of dimensions 
\[ \sim 2^{j- \frac{k}{2}} \times 2^j \times 2^{j-k}. \]
For fixed $j$ and $k$, let $\Lambda_{j,k}$ be the set of boxes $\tau = \tau_{j,k}$ corresponding to $j$ and $k$. Each box $\tau \in \Lambda_{j,k}$ is such that $2^{-j} \tau$ is contained in a standard box $\widetilde{\tau}$ at scale $2^{-k/2}$ for the cone $\Gamma_1 \cup -\Gamma_1$. When $k=j$ the boxes $\tau \in \Lambda_{j,j}$ are defined similarly, except that they cover the set $\mathcal{N}_1(\Gamma_{2^j} \cup -\Gamma_{2^j})$. The wave packet decomposition is set up in this way to apply a change of variables later; the $L^2$ integral of the ``good'' part of $\widehat{\mu}$ over the conical ring contained in the union of the boxes $\tau \in \Lambda_{j,k}$ will have a fixed Jacobian under this change of variables, and after rescaling by $2^{k-j}$ on the Fourier side this integral will correspond to a more standard $L^2$ conical average of the ``good'' part of $\widehat{\mu}$, which through duality will be controlled using the decoupling theorem for the cone. The extra rescaling step causes the wave packet decomposition used here to be slightly more complicated than in~\cite{GIOW}. 
 
This construction can be done in such a way that the boxes $2 \tau$ are finitely overlapping as $j$ and $k$ vary, and such that $\dist(2\tau_{j,k}, \Gamma_{\mathbb{R}}) \sim 2^{j-k}$ for all $j\neq k$. Let $\{\psi_{\tau}\}_{j,k, \tau \in \Lambda_{j,k}}$ be a smooth partition of unity subordinate to the cover of the set $\bigcup_{j \geq J} \bigcup_{0 \leq k \leq j} \bigcup_{\tau \in \Lambda_{j,k}} \tau$ by the sets $1.1\tau$, such that each $\psi_{\tau}$ has compact support in $1.1\tau$. Then
\begin{equation} \label{unitypartition} 1 =  \sum_{j \geq J} \sum_{k=0}^j \sum_{\tau \in \Lambda_{j,k}} \psi_{\tau} \quad \text{on} \quad \bigcup_{j \geq J} \bigcup_{0 \leq k \leq j} \bigcup_{\tau \in \Lambda_{j,k}} \tau. \end{equation}

Fix a small $\delta >0$ with $\delta \ll \epsilon$, to be chosen after $\epsilon$. For each triple $(j,k, \tau)$ with $j \geq J$ and $\tau \in \Lambda_{j,k}$, construct a finitely overlapping cover of the ball of radius 2 around the origin in $\mathbb{R}^3$, with tubes $T$ of dimensions 
\[ 2^{-j}2^{k(1/2+\delta)} \times 2^{-j}2^{k(1/2+\delta)} \times (10 \cdot 2^{-(j-k)+k\delta}), \]
such that each rescaled set $2^{j-k} T$ is a tube of diameter $\approx 2^{-k/2}$ and length $\approx 1$, with direction normal to the cone at the rescaled box $2^{-j}\tau$ (which has dimensions $\sim 2^{-k/2} \times 1 \times 2^{-k}$). Each tube is a union of boxes dual to the corresponding cap $\tau$, which are shorter than $T$ in the middle direction. Let $\mathbb{T}_{j,k, \tau}$ be the set of tubes corresponding to $\tau \in \Lambda_{j,k}$. Let $\{\eta_T\}_{T \in \mathbb{T}_{j,k, \tau} }$ be a smooth partition of unity subordinate to this cover of $B_3(0,2)$. For each $T \in \mathbb{T}_{j, k,  \tau}$, define $M_T$ by 
\[ M_Tf = \eta_T \widecheck{\psi_{\tau} \widehat{f}} = \eta_T\mathcal{F}^{-1}\left(\psi_{ \tau} \mathcal{F}(f) \right), \]
for Schwartz $f$. Let 
\[ \mathbb{T}_{j,k} = \bigcup_{ \tau \in \Lambda_{j,k} } \mathbb{T}_{j, k,\tau}, \quad \mathbb{T} = \bigcup_{j=J}^{\infty} \bigcup_{k=0}^j \mathbb{T}_{j,k}. \]
Fix a positive smooth function $\mu$ supported in the unit ball in $\mathbb{R}^3$, identified with the measure $\mu \, dx$.  The set of ``bad'' tubes will be the set of tubes with ``large'' $\mu$ measure, where ``large'' is defined so that the contribution coming from these ``bad'' tubes can be handled by the lemma of Orponen-Venieri (Lemma~\ref{venierilemma}). The contribution from the remaining ``good'' tubes will be controlled using Fourier analysis, which is (roughly) where the improvement over the Orponen-Venieri bound comes from. 

More explicitly, given $\alpha \in (3/2,5/2)$ (to be fixed later, corresponding to the ``dimension'' of $\mu$), define 
\begin{equation} \label{alphastardefn} \alpha^* = \max\left\{ \frac{\alpha}{3} + 1, \alpha -\frac{1}{2} \right\} - \epsilon.  \end{equation}
Define the set of ``bad'' tubes corresponding to $\tau \in \Lambda_{j,k}$ by
\begin{equation} \label{badtubestau} \mathbb{T}_{j,k, \tau, b} = \left\{ T \in \mathbb{T}_{j,k, \tau} : \mu\left(10T\right) \geq 100 \cdot 2^{\frac{k}{2} \left(100 \delta -\alpha^*\right)} 2^{-\alpha(j-k)} \right\}, \end{equation}
and let 
\begin{equation} \label{goodtubestau} \mathbb{T}_{j,k, \tau, g} = \mathbb{T}_{j,k, \tau} \setminus \mathbb{T}_{j,k, \tau, b}. \end{equation}
Let
\[ \mathbb{T}_{j,k, b} = \bigcup_{\tau \in \Lambda_{j,k}} \mathbb{T}_{j, k,\tau, b}, \quad  \mathbb{T}_{j,k, g} = \bigcup_{\tau \in \Lambda_{j,k}} \mathbb{T}_{j,k, \tau, g}. \]
%Similarly let 
%\[ \mathbb{T}_b = \bigcup_{j=J}^{\infty} \bigcup_{k=0}^j  \mathbb{T}_{j, k, b}, \quad \mathbb{T}_g = \bigcup_{j=J}^{\infty} \bigcup_{k=0}^j  \mathbb{T}_{j,k, g}. \]
Define the ``bad'' part of $\mu$ by summing over the ``bad'' tubes with $k$ bounded away from zero in the following quantitative sense:
\begin{equation} \label{mubaddefn} \mu_b =  \sum_{j=J}^{\infty} \sum_{k= \lceil j \epsilon \rceil}^j \sum_{\tau \in \Lambda_{j,k}} \sum_{T \in \mathbb{T}_{j,k, \tau, b}} M_T \mu. \end{equation}
Define the ``good'' part of $\mu$ by 
\[\mu_g = \mu - \mu_b. \]
Since the caps $\tau$ in \eqref{unitypartition} and the caps occurring in \eqref{mubaddefn} do not cover all of $\mathbb{R}^3$ but only a region around the light cone, the measure $\mu_g$ is not equal to the sum over the ``good'' tubes (in contrast to~\cite{GIOW}).

For the specific function $\mu$ used later, only finitely many values of $j$ in \eqref{mubaddefn} will be non-negligible, so there will not be any convergence issues in the infinite sum. The pushforward $\pi_{\theta\#}\mu_b$ of the complex measure $\mu_b$ will be equal to the sum of $\pi_{\theta\#} M_T\mu$ over the tubes defining $\mu_b$. 

%For each $j \geq J$ and $k$ with $0 \leq k \leq j$, let $\{B_l\}_l$ be a finitely overlapping cover of $\overline{B_3(0,1)}$ by balls of radius $2^{-(j-k)}$, and let $\{\phi_{j,k,l}\}_l$ be a smooth partition of unity subordinate to this cover. Let $\mu_{j,k,l} = \phi_{j,k,l} \mu$.

To bound the average $L^1$ norm of the pushforward of the ``bad'' part of a measure, the following lemma from Orponen and Venieri's work on the same problem will be used.
\begin{lemma}[{\cite[Lemma~2.3]{venieri} for $s < 9/4$ and~\cite{oberlin} for $s \geq 9/4$}]  \label{venierilemma} Let $s\in [0, 5/2)$, and suppose that $\nu$ is a compactly supported Borel measure on $\mathbb{R}^3$ such that $\sup_{\substack{x \in \mathbb{R}^3 \\ r >0 }} \frac{ \nu\left( B(x,r) \right) }{r^s}\leq  C_1$ and $\diam\left( \supp \nu \right) \leq C_2$, where $C_1, C_2$ are positive constants. Then for any 
\[ \kappa > \max\left\{ 0, \min\left\{\frac{2s}{3} -1,\frac{1}{2} \right\}\right\}, \]
there exist $\delta_0, \eta>0$, depending only on $C_1,C_2,s$ and $\kappa$, such that
\[ \nu\left\{ y \in \mathbb{R}^3: \mathcal{H}^1\left\{ \theta \in [0,2\pi) : \pi_{\theta\#} \nu\left( B\left( \pi_{\theta}(y), \delta \right)\right) \geq \delta^{s-\kappa} \right\} \geq \delta^{\eta} \right\} \leq \nu\left(\mathbb{R}^3\right)\delta^{\eta}, \]
for all $\delta \in (0, \delta_0)$. 
\end{lemma}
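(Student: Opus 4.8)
The plan is to establish the essentially equivalent ``bad pair'' estimate
\[ \left(\nu \times \mathcal{H}^1\right)\left\{(y,\theta) \in \mathbb{R}^3\times[0,2\pi) : \pi_{\theta\#}\nu\left(B(\pi_\theta(y),\delta)\right) \geq \delta^{s-\kappa}\right\} \lesssim \nu\left(\mathbb{R}^3\right)\delta^{2\eta}, \]
from which the displayed inequality follows by Fubini and Chebyshev: if the $\theta$-slice of this set had $\mathcal{H}^1$-measure $\geq \delta^\eta$ on a set of $y$'s of $\nu$-measure exceeding $\nu(\mathbb{R}^3)\delta^\eta$, the product measure would exceed $\nu(\mathbb{R}^3)\delta^{2\eta}$. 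After normalising $\nu$ to a probability measure (which only rescales $C_1$) and, via Frostman's lemma, replacing it by its restriction to a $C\delta$-neighbourhood of a maximal $\delta$-separated subset of $\supp \nu$, the quantity $\pi_{\theta\#}\nu(B(\pi_\theta(y),\delta))$ becomes comparable to $\delta^s$ times the number of the $\sim\delta^{-s}$ atoms of $\nu$ in the $\delta$-tube $T_\theta^\delta(y):=\pi_\theta^{-1}(B(\pi_\theta(y),\delta))$, so the task reduces to showing that a $\delta^{-\kappa}$-rich incidence between this point set and the three-parameter family $\{T_\theta^\delta(y)\}$ of $\delta$-tubes is rare. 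By one further application of Chebyshev it suffices to bound a second moment $\int_a^b \int \nu(T_\theta^\delta(y))^2\, d\nu(y)\, d\theta \lesssim_\epsilon \delta^{-\epsilon}\delta^{2(s-\kappa_0)}$ for some $\kappa_0$ strictly below the claimed threshold.

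For $s \geq 9/4$ I would invoke Oberlin--Oberlin's Fourier-analytic argument. Its key geometric input is that $\gamma$ has nonvanishing geodesic curvature, so the light cone $\Gamma_{\mathbb{R}}$ has a single nonvanishing principal curvature and its surface measure enjoys $|\xi|^{-1/2}$ Fourier decay; combined with the identity that $\widehat{\pi_{\theta\#}\nu}$ is the restriction of $\widehat\nu$ to the plane $\gamma(\theta)^{\perp}$, a single-scale $L^2$ estimate in the spirit of the Fourier energy bound \eqref{energytwo} yields $\nu(T_\theta^\delta(y)) \lesssim \delta^{s-1/2}$ off a set of $(y,\theta)$ of the required smallness, i.e.\ $\kappa$ slightly above $\tfrac12 = \min\left\{\tfrac{2s}{3}-1,\tfrac12\right\}$ in this range. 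The harder range is $s \in (3/2,9/4)$, where I would follow Orponen--Venieri and exploit the \emph{constant-height} feature of $\gamma$. Writing $x = (w,z) \in \mathbb{R}^2\times\mathbb{R}$, one has the identity
\[ \pi_\theta(w,z) = \left( \langle w, (-\sin\theta,\cos\theta)\rangle,\ \tfrac{1}{\sqrt2}\left(z - \langle w, (\cos\theta,\sin\theta)\rangle\right)\right), \]
so $T_\theta^\delta(y)$ meets each horizontal slice $\{z = c\}$ in a $\delta$-disc centred at $w_y + (c - z_y)(\cos\theta,\sin\theta)$, and both coordinates of $\pi_\theta$ are controlled by the \emph{full} circle of line projections of $\mathbb{R}^2$ onto the directions $(-\sin\theta,\cos\theta)$ and $(\cos\theta,\sin\theta)$. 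Since those directions sweep out all of $S^1$, the transversality integral $\int_a^b \left|\langle w - w', (-\sin\theta,\cos\theta)\rangle\right|^{-\beta}\,d\theta \lesssim_\beta |w - w'|^{-\beta}$ holds for \emph{every} $\beta < 1$, with none of the conical concentration loss afflicting the analogous integral over the one-parameter family of directions $\gamma(\theta) \in S^2$; feeding this planar transversality, the Marstrand slicing principle (slices of an $s$-dimensional set are $(s-1)$-dimensional for a.e.\ height), and a Fubini/H\"older reassembly into the second moment above gives the bound with $\kappa_0$ up to $\tfrac{2s}{3}-1$.

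The step I expect to be the main obstacle is extracting precisely the exponent $\tfrac{2s}{3}$: bounding the horizontal slices independently only yields $\kappa > 1$, so one must keep track of the fact that the $\delta$-discs $T_\theta^\delta(y) \cap \{z = c\}$ have collinear centres, and the right estimate is a genuinely three-dimensional second moment in which the planar transversality gain (a power $\beta<1$) is balanced against the $(s-1)$-dimensional Frostman condition on the slices and the extra height integration, with the hypothesis $\det\left(\gamma,\gamma',\gamma''\right)\neq 0$ ensuring that the relevant change of variables is uniformly nondegenerate. Once the single-scale second-moment estimate is in hand, upgrading it to the bound valid for all $\delta \in (0,\delta_0)$ with an explicit $\delta^\eta$ saving is routine: pigeonhole the rich pairs into the dyadic level sets on which $\nu(T_\theta^\delta(y))$ has a fixed size in $[\delta^{s-\kappa},1]$, sum the resulting geometric series, and finally choose $\eta$ (and, before it, $\epsilon$ and the discretisation constant $C$) small in terms of the gap between $\kappa$ and $\max\left\{0,\min\left\{\tfrac{2s}{3}-1,\tfrac12\right\}\right\}$, so that $\delta_0$ and $\eta$ depend only on $C_1, C_2, s, \kappa$ as required.
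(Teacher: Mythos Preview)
Your proposal is a sketch rather than a proof, and for the case $s \geq 9/4$ (the only case the paper actually proves, citing Orponen--Venieri for $s < 9/4$) it is considerably more involved than necessary. The paper's argument for $s \geq 9/4$ is a three-line affair: setting $t = s - \kappa + 100\eta$ with $\eta$ small, two applications of Chebyshev give
\[ \nu(Z) \leq \delta^{-(s-\kappa+\eta)} \int_0^{2\pi} \int \int \mathbf{1}_{|\pi_\theta(x-y)| < \delta}\,d\nu(x)\,d\nu(y)\,d\theta \leq \delta^{99\eta} \int_0^{2\pi} I_t(\pi_{\theta\#}\nu)\,d\theta, \]
and the right-hand energy integral is $\lesssim \nu(\mathbb{R}^3)$ by the Oberlin--Oberlin inequality (equivalently, \eqref{energyone} of Theorem~\ref{curvature}), since $t < s - 1/2$. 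No discretisation, no incidence counting, no second moment is needed: a \emph{first}-moment (energy) bound suffices, and your Frostman-discretisation step is superfluous because the Frostman condition is already a hypothesis.

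Your second-moment framework for $s < 9/4$ is also not quite what Orponen--Venieri do. As the paper's appendix indicates, their key estimate is a \emph{fourth}-moment bound $\nu^4\{(z,z_1,z_2,z_3) : z_j \sim_j z\text{ for all }j\} \lessapprox t^{2s}\delta^s$, obtained by fixing $z_1$ and applying the Frostman condition to the pairs $(z_1,z_2)$, $(z_1,z_3)$ separately; the exponent $\tfrac{2s}{3}$ emerges from balancing this against the level-set count, not from a second-moment-plus-planar-transversality computation. You correctly identify the constant-height structure of $\gamma$ and the collinearity of the horizontal slices of $T_\theta^\delta(y)$ as the crucial geometry, but the second-moment route as you describe it does not obviously deliver $\kappa_0 = \tfrac{2s}{3} - 1$; the obstacle you flag is real, and the resolution in \cite{venieri} is to pass to a higher moment.
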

The $\nu\left(\mathbb{R}^3\right)$ factor is not given explicitly in~\cite{venieri}, but follows from their proof. This factor is explained in more detail in Appendix~\ref{appendix2}, along with a proof that the case $s \geq 9/4$ follows from the main inequality in~\cite{oberlin}. %The proof of a higher dimensional version of this lemma will also be given in Lemma~\ref{sufficient}. 
\begin{remark} For ease of reference, the choice and dependence of the parameters in the proof of Theorem~\ref{neat} below will be summarised here. The parameter $\epsilon >0$ is given first and may be arbitrarily small (see the paragraph before \ref{refstar}). The parameters $\delta_0,\eta>0$ are chosen after $\epsilon$, and then $\delta$ is chosen after $\eta$ and $\delta_0$, so that 
\[ 0 < \delta \ll \eta \ll 1, \quad 0 < \delta \ll \delta_0 \ll 1; \]
see the paragraph after~\eqref{refdoublestar}. With these parameters fixed, $\epsilon'>0$ is given (see the paragraph after~\eqref{sprimedefn}), and then $\delta_1>0$ is chosen after $\epsilon'$ such that 
\[ \delta_1 < \delta_0/4, \quad \delta_1^{\eta} < \epsilon', \quad \delta_1 \ll 2^{-1/(\epsilon^{10})}, \]
and such that 
\begin{equation} \label{trivialinequality} \max\left\{x^4 2^{-(\epsilon^2 x)/(100) }, x^2 2^{-(\epsilon^2 \eta x)/(8C) }\right\} \ll \min\left\{ \epsilon', \delta_0 \right\} \quad \text{ for all } x > \left\lvert \log_2 \delta_1 \right\rvert, \end{equation}
where $C>0$ is an absolute constant (see paragraphs after \eqref{sprimedefn}, \eqref{delta1choiceone} and \eqref{Jchoice}). A positive integer $j_0> \left\lvert \log_2 \delta_1 \right\rvert$ is then given (see~\eqref{bad}) and $J$ is defined by $J= \left\lfloor (j_0 \epsilon)/(2C) \right\rfloor$ (see \eqref{Jchoice}).  

The intermediate size parameters are $\dim A$, $\alpha = \dim A -\epsilon$ (see the paragraph before \eqref{refstar}), $\alpha^* = \max\left\{ 1+ \alpha/3, \alpha- 1/2 \right\} - \epsilon$ (see \eqref{alphastardefn}), $s= \max\{ 4\alpha /9 + 5/6, (2\alpha+1)/3 \}$ (see \eqref{sprimedefn}), $s' = s-50\sqrt{\epsilon}$ (see \eqref{sdoubleprimedefn}) and $\kappa = 1-\epsilon/(10^5)$ (see \eqref{kappadefn}). \end{remark}

\subsection{Main part of the proof}

\begin{proof}[Proof of Theorem~\ref{neat}] Let $\dim A \in (3/2,5/2)$ and assume without loss of generality that $A$ is a subset of the unit ball. Let $\epsilon>0$ be small. By Frostman's lemma, there is a compactly supported probability measure $\nu$ on $A$ with $c_{\alpha}(\nu) < \infty$, where $\alpha = \dim A -\epsilon> 3/2$. Let $E \subseteq [0,2\pi)$ be a compact set such that 
\begin{equation} \label{refstar} \dim \pi_{\theta} \supp \nu < s-200 \sqrt{\epsilon} \quad \text{for every } \theta \in E, \end{equation}
where 
\begin{equation} \label{sprimedefn} s:= \max\left\{\frac{4\alpha}{9} + \frac{5}{6}, \frac{2\alpha+1}{3} \right\}. \end{equation}
Let $\epsilon'>0$ be arbitrary. The proof will be carried out by showing that $\mathcal{H}^1(E) \lesssim_{\epsilon} \epsilon'$, then letting $\epsilon' \to 0$, and then letting $\epsilon \to 0$. For $\delta_0, \eta>0$ to be specified later (depending on $\epsilon$), let $\delta_1>0$ be such that $\delta_1 < \delta_0/4$ and $\delta_1^{\eta} < \epsilon '$ (the exact choice of $\delta_1$ will also be made later, but after $\delta_0$ and $\eta$; see the paragraphs after \eqref{delta1choiceone} and \eqref{Jchoice}). For each $\theta \in E$, let $\left\{ B_{\gamma(\theta)^{\perp}}\left(z_i(\theta), \delta_i(\theta) \right) \right\}_i$ be a cover of $\pi_{\theta} \supp \nu$ by balls of dyadic radii $\delta_i(\theta) < \delta_1$, such that 
\begin{equation} \label{scondition} \sum_i \delta_i(\theta)^{s-100\sqrt{\epsilon}} < \epsilon' \end{equation}
(measurability issues of the function $\theta \mapsto \delta_i(\theta)$ will be ignored, they can be dealt with similarly to the proof of Lemma~2.1 in~\cite{harris2}). Let
\[ D_{\theta}^j = \bigcup_{\substack{i  \\
 \delta_i(\theta) = 2^{-j}}}  B\left(z_i(\theta), \delta_i(\theta) \right), \]
and let  $\widetilde{D_{\theta}^j}$ be the $2^{-j}$ neighbourhood of $D_{\theta}^j$. Let 
\begin{equation} \label{convolve2} \nu_j = \nu \ast \phi_j, \quad \phi_j(x) = 2^{3j} \phi(2^j x), \end{equation}
for a smooth positive bump function $\phi$ equal to 1 on the unit ball, satisfying $0 \leq \phi \leq 1$ everywhere and vanishing outside $B(0,2)$. For each $\theta \in E$, 
\begin{equation} \label{patch1} 1 = \nu(A) \leq \sum_{j > \left\lvert \log_2 \delta_1 \right\rvert} \pi_{\theta \#} \nu(D_{\theta}^j )  \lesssim \sum_{j > \left\lvert \log_2 \delta_1 \right\rvert} \int_{\widetilde{D_{\theta}^j}} \pi_{\theta \#} \nu_j \, d\mathcal{H}^2; \end{equation}
the last inequality follows by expanding out each summand of the right-hand side and applying Fubini, the assumption that $\phi=1$ on the unit ball, and the 1-Lipschitz property of the projections $\pi_{\theta}$ (as in~\cite[p.7]{liu}, see also \eqref{projformula} for the formula for the density $\pi_{\theta\#}\nu_j$ with respect to $\mathcal{H}^2$). Integrating \eqref{patch1} over $\theta \in E$ gives
\[ \mathcal{H}^1(E) \lesssim \sum_{j > \left\lvert \log_2 \delta_1 \right\rvert} \int_E \int_{\widetilde{D_{\theta}^j}} \pi_{\theta \#} \nu_j \, d\mathcal{H}^2 \, d\theta. \]
Hence there exists a single $j_0 > \left\lvert \log_2 \delta_1 \right\rvert$, which may depend on $E$ and $\epsilon'$, such that 
\begin{align}\notag  \mathcal{H}^1(E) &\lesssim j_0^2 \int_E \int_{\widetilde{D_{\theta}^{j_0}}} \pi_{\theta \#} \nu_{j_0} \, d\mathcal{H}^2 \, d\theta \\
\label{bad} &\leq j_0^2 \int_0^{2\pi} \left\lVert \pi_{\theta \#} \mu_b \right\rVert_{L^1(\mathbb{R}^3, \mathcal{H}^2)} \, d\theta + j_0^2\int_E \int_{\widetilde{D_{\theta}^{j_0}}} \left\lvert \pi_{\theta \#} \mu_g \right\rvert \, d\mathcal{H}^2 \, d\theta, \end{align}
where 
\[ \mu := \nu_{j_0}, \]
and the $J$ in the ``good-bad'' decomposition will be chosen later (depending on $j_0$ and $\epsilon$; see \eqref{Jchoice}). The proof will proceed by showing each term in \eqref{bad} is $\lesssim_{\epsilon} \epsilon'$, from which the theorem will be shown to follow. 

Assume the first term in \eqref{bad} dominates. If the angle of a tube $T$ is not roughly equal to the angle of projection $\theta$, then $\pi_{\theta\#} (M_T\mu)$ is negligible (the proof of this is via nonstationary phase, and will be postponed until Subsection~\ref{nonstationary}). To be more precise, for each $\tau \in \Lambda_{j,k}$ let $\angle \tau$ be the angle corresponding to $\tau$. For the forward (resp.~backward) light cone, this angle can be defined as the value of $\phi$ such that the line through $(\cos \phi, \sin \phi, 1)$ (resp.~$(\cos \phi, \sin \phi, -1)$) passes through the barycentre of the unique standard box $\widetilde{\tau}$ at scale $2^{-k/2}$ containing $2^{-j}\tau$. Use the notation $(\angle \tau)^*$ to denote $(\pi + \angle\tau) \bmod 2\pi $ if $\tau$ lies in the forward light cone, and $(\angle \tau)^* = \angle\tau$ if $\tau$ lies in the backward light cone. %Use $(\angle T)^*$ to denote $\angle T$ if $\tau(T)$ lies in the forward light cone, and $(\angle T)^* = (\pi+ \angle T)\bmod 2\pi$ if $\tau(T)$ lies in the backward light cone.
 For each $\theta \in [0,2\pi)$, the first integrand in \eqref{bad} satisfies
\begin{align*}  \left\lVert  \pi_{\theta \#} \mu_b \right\rVert_{L^1(\mathbb{R}^3, \mathcal{H}^2)} &= \left\lVert   \sum_{j=J}^{\infty} \sum_{k= \lceil j \epsilon \rceil}^j\sum_{\tau \in \Lambda_{j,k}} \sum_{T \in \mathbb{T}_{j,k, \tau, b}} \pi_{\theta \#} (M_T \mu) \right\rVert_{L^1(\mathbb{R}^3, \mathcal{H}^2)} \\
&\leq   \sum_{j=J}^{\infty} \sum_{k= \lceil j \epsilon \rceil}^j\sum_{\tau \in \Lambda_{j,k}} \sum_{T \in \mathbb{T}_{j,k, \tau, b}} \left\lVert  \pi_{\theta \#} (M_T \mu) \right\rVert_{L^1(\mathbb{R}^3, \mathcal{H}^2)} \\
&=   \sum_{j=J}^{\infty} \sum_{k= \lceil j \epsilon \rceil}^j\sum_{\substack{\tau \in \Lambda_{j,k} \\
 \left\lvert(\angle \tau)^*- \theta \right\rvert \\
\leq 10^32^{k(-1/2+\delta)} }} \sum_{T \in \mathbb{T}_{j,k, \tau, b} } \left\lVert  \pi_{\theta \#} (M_T \mu) \right\rVert_{L^1(\mathbb{R}^3, \mathcal{H}^2)} \\
&\quad + \sum_{j=J}^{\infty} \sum_{k= \lceil j \epsilon \rceil}^j\sum_{\substack{\tau \in \Lambda_{j,k} \\
  \left\lvert(\angle \tau)^*- \theta \right\rvert \\
	> 10^32^{k(-1/2+\delta)}} } \sum_{T \in \mathbb{T}_{j,k, \tau, b} } \left\lVert  \pi_{\theta \#} (M_T \mu) \right\rVert_{L^1(\mathbb{R}^3, \mathcal{H}^2)}.  \end{align*}
By Lemma~\ref{nonstationaryphase}, the second term is $\lesssim_{\delta, \epsilon, N} 2^{-JN}$, and therefore
	\begin{multline} \label{pause1} \left\lVert  \pi_{\theta \#} \mu_b \right\rVert_{L^1(\mathbb{R}^3, \mathcal{H}^2)} \\
	\lesssim_{\delta, \epsilon, N} 2^{-JN} + \sum_{j=J}^{\infty} \sum_{k= \lceil j \epsilon \rceil}^j\sum_{\substack{\tau \in \Lambda_{j,k} \\
 \left\lvert(\angle \tau)^*- \theta \right\rvert \\
\leq 10^32^{k(-1/2+\delta)} }} \sum_{T \in \mathbb{T}_{j,k, \tau, b} } \left\lVert  \pi_{\theta \#} (M_T \mu) \right\rVert_{L^1(\mathbb{R}^3, \mathcal{H}^2)}. \end{multline}
By the formula \eqref{projformula} for the pushforward density, the $L^1$ norm of the pushforward satisfies $\left\lVert  \pi_{\theta \#} f \right\rVert_{L^1(\mathbb{R}^3, \mathcal{H}^2)} \leq \lVert f\rVert_1$ for any $f \in L^1(\mathbb{R}^3)$; this will be applied to $f=M_T\mu$. The function $\widecheck{\psi_{\tau}}$ decays rapidly outside the box $\tau'$ centred at 0 with dual dimensions to $\tau$, and this box is smaller than $T$, so by Fubini the operator $M_T$ satisfies $\lVert M_T \mu\rVert_1  \lesssim_{\delta,N}  \mu\left(2T\right) + 2^{-kN}$.  Hence
\[ \left\lVert  \pi_{\theta \#} (M_T \mu) \right\rVert_{L^1(\mathbb{R}^3, \mathcal{H}^2)} \leq \lVert M_T \mu\rVert_1 \lesssim_{\delta, N} \mu\left(2T\right) + 2^{-kN}. \]
Putting this into \eqref{pause1} yields
\begin{equation} \label{additive} \left\lVert  \pi_{\theta \#} \mu_b \right\rVert_{L^1(\mathbb{R}^3, \mathcal{H}^2)} \lesssim_{\delta, \epsilon, N} 2^{-JN} + \sum_{j=J}^{\infty} \sum_{k= \lceil j \epsilon \rceil}^j\sum_{\substack{\tau \in \Lambda_{j,k} \\
 \left\lvert(\angle \tau)^*- \theta \right\rvert \\
\leq 10^32^{k(-1/2+\delta)} }} \sum_{T \in \mathbb{T}_{j,k, \tau, b} } \mu\left(2T\right). \end{equation}
This will be simplified using ``essential disjointness'' of the inner two sums. Let
\[ B_{j,k}(\theta) = \bigcup_{\substack{\tau \in \Lambda_{j,k} \\
 \left\lvert(\angle \tau)^*- \theta \right\rvert \\
\leq 10^32^{k(-1/2+\delta)} }}  \bigcup_{T \in \mathbb{T}_{j,k,\tau,b}} 2T. \] 
For each $\theta$, the number of $\tau$'s occurring in the third sum of \eqref{additive} is $\lesssim 2^{k\delta}$, so \eqref{additive} becomes
\begin{equation} \label{pause5}  \left\lVert  \pi_{\theta \#} \mu_b \right\rVert_{L^1(\mathbb{R}^3, \mathcal{H}^2)} \lesssim_{\delta, \epsilon, N} 2^{-JN}+  \sum_{j=J}^{\infty}  \sum_{k= \lceil j \epsilon \rceil}^j  2^{k\delta} \mu(B_{j,k}(\theta)). \end{equation}
Define 
\[ B_{j,k} = \left\{ (\theta, x) \in [0,2\pi) \times \mathbb{R}^3 : x \in B_{j,k}(\theta) \right\}.  \]
Integrating \eqref{pause5} over $[0,2\pi)$ gives 
\begin{align} \notag  \int_0^{2\pi} \left\lVert  \pi_{\theta \#} \mu_b \right\rVert_{L^1(\mathbb{R}^3, \mathcal{H}^2)} \, d\theta &\lesssim_{\delta, \epsilon, N} 2^{-JN}+  \sum_{j=J}^{\infty} \sum_{k= \lceil j \epsilon \rceil}^j   2^{k\delta} \int_0^{2\pi} \mu(B_{j,k}(\theta)) \, d\theta \\
\label{bothsums} &= 2^{-JN}+  \sum_{j=J}^{\infty} \sum_{k= \lceil j \epsilon \rceil}^j  2^{k\delta} \left(\mathcal{H}^1 \times \mu\right)(B_{j,k}). \end{align}
If $(\theta_0,x_0) \in B_{j,k}$, then there is a ``bad'' tube $T_0 \in \mathbb{T}_{j,k,b}$ such that $x_0\in 2T_0$, corresponding to a cap $\tau_0$ with 
\begin{equation} \label{angleT} \lvert (\angle \tau_0)^* - \theta_0\rvert \leq 10^3 2^{k(-1/2 + \delta)}. \end{equation} 
%Assume that $\tau_0$ lies in the forward light cone, so that $(\angle \tau_0)^* = \angle \tau_0 + \pi$ and $(\angle T_0)^* = \angle T_0$. The tube $T_0$ is normal to the cone at $\tau_0$ (i.e.~normal to cone at $2^{-j} \tau_0$), which means that $\angle T_0 =(\angle T_0)^* =  (\angle \tau_0)^* \bmod 2\pi$, where $\angle T_0$ is such that $T_0$ has direction $\frac{1}{\sqrt{2}} \left( \cos \angle T_0, \sin \angle T_0, 1 \right)$. Hence
%\begin{equation} \label{angleT} \left\lvert (\angle T_0)^* - \theta\right\rvert \leq 10^3 2^{k(-1/2 + \delta)}. \end{equation} 
%This holds similarly if $\tau_0$ lies in the backward light cone. 

Since the tube $T_0$ is roughly in the direction of $\gamma(\theta_0)$, the image of $T_0$ under $\pi_{\theta_0}$ is approximately a disc of the same radius, and the ``bad'' tube assumption means the projected measure fails a Frostman condition (to be made precise below). The Orponen-Venieri lemma (Lemma~\ref{venierilemma}) gives a bound on the measure of those points which fail Frostman conditions in many directions, so this will now be used to control \eqref{bothsums}. 

By the definition of ``bad'' tubes in \eqref{badtubestau}, the tube $T_0$ satisfies
\[ (\mu \chi_{B_l})\left(10T_0 \right) \geq  2^{\frac{k}{2} \left( 98 \delta - \alpha^* \right)- \alpha(j-k)}, \]
for some $l$ (see Subsection~\ref{subsectionBl} for the definition of $B_l$). By \eqref{angleT},
\[ 10 T_0 \subseteq \pi_{\theta_0}^{-1}\left( B\left( \pi_{\theta_0}(x_0), 10^7 \cdot 2^{-j + k(1/2 + 2\delta) } \right) \right), \]
and so
\begin{multline} \label{frostmanproj}  (\pi_{\theta_0 \#}\mu \chi_{B_l}) \left( B\left( \pi_{\theta_0}(x_0), 10^7 \cdot 2^{-j + k(1/2 + 2\delta) } \right) \right) \geq \mu \chi_{B_l}\left(10 T_0\right) \\
 \geq 2^{\frac{k}{2} \left( 98 \delta - \alpha^* \right)- \alpha(j-k)}.  \end{multline}
Let $B_{j,k,l}$ be the set of points $(\theta,x) \in B_{j,k}$ such that $x \in 100 \cdot 2^{k\delta}B_l$ and such that the outer parts of \eqref{frostmanproj} hold with $(\theta_0,x_0)$ replaced by $(\theta, x)$. More explicitly,
\begin{multline*} B_{j,k,l} := \Big\{ (\theta,x) \in [0,2\pi) \times 100 \cdot 2^{k\delta}B_l :  \\
(\pi_{\theta \#}\mu \chi_{B_l}) \left( B\left( \pi_{\theta}(x), 10^7 \cdot 2^{-j + k(1/2 + 2\delta) } \right) \right) \geq 2^{\frac{k}{2} \left( 98 \delta - \alpha^* \right)- \alpha(j-k)} \Big\}. \end{multline*}
 Using the parameter $\eta >0$ (yet to be chosen), let 
\begin{equation} \label{Zdefn} Z_{j,k,l} = \left\{ x \in \mathbb{R}^3:  \int_0^{2\pi} \chi_{B_{j,k,l}}(\theta,x) \, d \theta \geq \left( 2^{k(-1/2 + 2\delta) }\right)^{\eta} \right\}. \end{equation}
Roughly speaking, $x \in Z_{j,k,l}$ means that $x$ has lots of ``bad'' tubes passing through it, whose projections are discs failing a Frostman condition. The contribution from points in $Z_{j,k,l}$ will be bounded using the Orponen-Venieri lemma (Lemma~\ref{venierilemma}), whilst the contribution from points outside $Z_{j,k,l}$ will be controlled by the negation of the inequality in \eqref{Zdefn}. The set $B_{j,k}$ is contained in $\bigcup_l B_{j,k,l}$ by the reasoning leading to \eqref{frostmanproj}, so each summand of \eqref{bothsums} satisfies
\begin{align} \notag (\mathcal{H}^1 \times \mu)(B_{j,k}) &\leq \sum_l (\mathcal{H}^1 \times \mu)(B_{j,k,l}) \\
 \notag &= \sum_l (\mathcal{H}^1 \times \mu)\left(B_{j,k,l} \cap \left( [0,2\pi)\times Z_{j,k,l} \right) \right) \\
\label{secondsum} &\quad + \sum_l(\mathcal{H}^1 \times \mu)\left(B_{j,k,l} \setminus \left( [0,2\pi)\times Z_{j,k,l} \right) \right). \end{align} 
%To bound the second sum, write $m \sim l$ if $\supp \mu_{j,k,m} \cap 20B_l \neq \emptyset$ (the number of such $m$ is $\lesssim 1$). Then 
The second sum satisfies
\begin{align*} &\sum_l(\mathcal{H}^1 \times \mu)\left(B_{j,k,l} \setminus \left( [0,2\pi)\times Z_{j,k,l} \right) \right) \\
%&\quad \leq \sum_l \sum_{m \sim l} (\mathcal{H}^1 \times \mu_{j,k,m})\left(B_{j,k,l} \setminus \left( [0,2\pi)\times Z_{j,k,l} \right) \right) \\
%&\quad \leq \sum_l  (\mathcal{H}^1 \times \mu\chi_{B_l})\left(B_{j,k,l} \setminus \left( [0,2\pi)\times Z_{j,k,l} \right) \right) \\
%&\quad \leq \sum_l \sum_{m \sim l} \int_{\mathbb{R}^3 \setminus Z_{j,k,l}} \int_0^{2\pi} \chi_{B_{j,k,l}}(\theta,x) \, d \theta \, d\mu_{j,k,m}(x) \\
&\quad \leq \sum_l  \int_{100 \cdot 2^{k\delta} B_l \setminus Z_{j,k,l}} \int_0^{2\pi} \chi_{B_{j,k,l}}(\theta,x) \, d \theta \, d\mu(x) \\
&\quad \lesssim 2^{300 \cdot k\delta} \left( 2^{k(-1/2 + 2\delta) }\right)^{\eta},  \end{align*} 
by the inequality in \eqref{Zdefn} defining $Z_{j,k,l}$. Since $\delta \ll \eta \ll 1$ ($\delta$ has not been chosen yet, but it may be chosen after $\eta$), putting the previous bound into \eqref{secondsum} and then \eqref{bothsums} results in
\begin{multline} \label{lastsum} \int_0^{2\pi} \left\lVert  \pi_{\theta \#} \mu_b \right\rVert_{L^1(\mathbb{R}^3, \mathcal{H}^2)} \, d \theta \\
\lesssim_{\delta, \epsilon, \eta} 2^{-(J\epsilon \eta)/4}+ \sum_{j=J}^{\infty}  \sum_{k= \lceil j \epsilon \rceil}^j \sum_l 2^{k\delta} (\mathcal{H}^1 \times \mu)(B_{j,k,l} \cap \left( [0,2\pi)\times Z_{j,k,l} \right)). \end{multline}
It remains to bound the sum on the right-hand side. By the definitions of $Z_{j,k,l}$ and $B_{j,k,l}$,
\begin{align} \notag &(\mathcal{H}^1 \times \mu)(B_{j,k,l} \cap \left( [0,2\pi)\times Z_{j,k,l} \right)) \\
\notag &\quad \leq (\mathcal{H}^1 \times \mu)\left( [0,2\pi)\times Z_{j,k,l} \right) \\
\notag &\quad \sim \mu\left(Z_{j,k,l} \right) \\
\notag &\quad = \mu\Big\{ x \in 100 \cdot 2^{k\delta} B_l : \mathcal{H}^1\left\{ \theta \in [0,2\pi) : (\theta,x) \in B_{j,k,l} \right\} \geq \left( 2^{k(-1/2 + 2\delta) }\right)^{\eta} \Big\} \\
\notag &\quad =  \mu\Big\{ x \in 100 \cdot 2^{k\delta} B_l : \mathcal{H}^1\Big\{ \theta \in [0,2\pi) : \\ 
 \notag & \qquad \qquad (\pi_{\theta \#}\mu \chi_{B_l}) \left( B\left( \pi_{\theta}(x), 10^7 \cdot 2^{-j + k(1/2 + 2\delta) } \right) \right)  \geq 2^{\frac{k}{2} \left( 98 \delta - \alpha^* \right)- \alpha(j-k)} \Big\} \\
\notag &\qquad \qquad \qquad \geq \left( 2^{k(-1/2 + 2\delta) }\right)^{\eta} \Big\} \\
\label{tagnu} &\quad \leq 2^{-\alpha(j-k-k\delta)}\lambda\Big\{ x \in \mathbb{R}^3 : \mathcal{H}^1\Big\{ \theta \in [0,2\pi) :\\
\notag &\qquad \quad (\pi_{\theta \#}\lambda) \left( B\left( \pi_{\theta}(x), 2^{k(-1/2 + 2\delta) } \right) \right) \geq \left(2^{k(-1/2 + 2\delta)}\right)^{\alpha^*} \Big\} \geq \left( 2^{k(-1/2 + 2\delta) }\right)^{\eta} \Big\},  \end{align}
where 
\[ \lambda :=  2^{\alpha(j-k-k\delta)} \cdot \left[A_{\#}(\mu \chi_{100 \cdot 2^{k\delta} B_l})\right], \]
and $A$ is the linear scaling map $x \mapsto 10^{-7} \cdot 2^{j-k-k\delta}x$. The measure $\lambda$ is supported in a ball of diameter $\lesssim 1$, and satisfies $c_{\alpha} (\lambda) \lesssim 1$ by the property $c_{\alpha}(\mu) \lesssim 1$ of $\mu= \nu_{j_0}$ inherited from the assumption $c_{\alpha}(\nu) \lesssim 1$ (see \eqref{convolve2}). The total mass of $\lambda$ satisfies $\lambda\left(\mathbb{R}^3 \right) = 2^{\alpha(j-k-k\delta)} \mu\left(100 \cdot 2^{k\delta} B_l\right)$. Applying Lemma~\ref{venierilemma} to \eqref{tagnu} therefore gives 
\begin{equation} \label{refdoublestar} (\mathcal{H}^1 \times \mu)(B_{j,k,l} \cap \left( [0,2\pi)\times Z_{j,k,l} \right)) \leq \mu\left(100 \cdot 2^{k\delta} B_l\right) \left( 2^{k(-1/2 + 2\delta)} \right)^{\eta}, \end{equation}
provided $\delta_0$ and $\eta$ are chosen small enough to ensure that Lemma~\ref{venierilemma} and Theorem~\ref{refinedstrichartz} hold. More precisely, Lemma~\ref{venierilemma} is applied with $\alpha$ in place of $s$ and $\alpha-\alpha^*= \min\left\{ \frac{2\alpha}{3} -1, \frac{1}{2} \right\}+\epsilon$ in place of $\kappa$, so that the $\delta_0$ and $\eta$ given by the lemma depend only on $\epsilon$ and $\dim A$. Since $\delta \ll \eta \ll 1$, putting \eqref{refdoublestar} into \eqref{lastsum} and summing over $l$ gives 
\begin{equation} \label{delta1choiceone} \int_0^{2\pi} \left\lVert  \pi_{\theta \#} \mu_b \right\rVert_{L^1(\mathbb{R}^3, \mathcal{H}^2)} \, d\theta \lesssim_{\epsilon} 2^{-(J\epsilon \eta)/4}. \end{equation}
The choice of $J$ (made later in \eqref{Jchoice}) is $J= (j_0\epsilon)/(2C)$ for a large absolute constant $C$, so by choosing $\delta_1$ small enough (depending on $\epsilon'$, $\eta$ and $\delta_0$) and using $j_0 > \left\lvert \log_2 \delta_1 \right\rvert$, the quantity $j_0^22^{-(J\epsilon \eta)/4}$ will be much smaller than $\epsilon'$ and $\delta_0$, which shows that $\mathcal{H}^1(E) \lesssim_{\epsilon} \epsilon'$  if the first integral in \eqref{bad} dominates. The parameter $\delta_1$ is also taken small enough to ensure that the radius occurring in \eqref{tagnu} is smaller than $\delta_0$, so that the application of Lemma~\ref{venierilemma} is valid.

Now suppose that the second term in \eqref{bad} dominates. Applying Cauchy-Schwarz to the double integral in \eqref{bad} gives 
\begin{align} \notag \mathcal{H}^1(E) &\lesssim  j_0^4  \sup_{\theta \in E} \mathcal{H}^2\left( \widetilde{D_{\theta}^{j_0}} \cap \gamma(\theta)^{\perp} \right) \int_0^{2\pi} \left\lVert  \pi_{\theta \#} \mu_g \right\rVert_{L^2(\mathbb{R}^3, \mathcal{H}^2)}^2  \, d\theta \\
\label{HoneE} &\lesssim  j_0^42^{j_0(s-2-100\sqrt{\epsilon})} \int_0^{2\pi}  \left\lVert  \pi_{\theta \#} \mu_g \right\rVert_{L^2(\mathbb{R}^3, \mathcal{H}^2)}^2 \, d\theta, \end{align} 
by \eqref{scondition}.

For each $\theta \in [0,2\pi)$ let $U_{\theta}: \mathbb{R}^3 \to \mathbb{R}^3$ be the unitary satisfying
\[ U_{\theta}\left(\sqrt{2}\gamma'(\theta)\right) = e_1, \quad U_{\theta} \left(\sqrt{2} \gamma(\theta) \times \gamma'(\theta)\right) = e_2, \quad U_{\theta}\gamma(\theta) = e_3, \]
where the $e_i$ are the standard basis vectors in $\mathbb{R}^3$. This rotates the image of $\pi_{\theta}$ to $\mathbb{R}^2 \times \{0\}$. Since $\mathcal{H}^2$ is a rotation invariant measure on $\mathbb{R}^3$, 
\begin{align} \notag &2^{j_0\left(s-2-100\sqrt{\epsilon}\right)}\int_0^{2\pi} \left\lVert  \pi_{\theta \#} \mu_g \right\rVert_{L^2(\mathbb{R}^3, \mathcal{H}^2)}^2 \, d\theta \\
\notag &\quad = 2^{j_0\left(s-2-100\sqrt{\epsilon}\right)}\int_0^{2\pi} \left\lVert  U_{\theta \#}\pi_{\theta\#}\mu_g \right\rVert_{L^2(\mathbb{R}^2)}^2 \, d\theta  \\
\label{preannying} &\quad = 2^{j_0\left(s-2-100\sqrt{\epsilon}\right)}\int_{B\left(0,2^{j_0(1+\delta) } \right)}  \int_0^{2\pi} \left\lvert \widehat{\mu_g}\left(\eta_1 \sqrt{2} \gamma'(\theta) + \eta_2 \sqrt{2} \gamma(\theta) \times \gamma'(\theta) \right)\right\rvert^2 \, d\theta \, d \eta  \\
\notag &\qquad + 2^{j_0\left(s-2-100\sqrt{\epsilon}\right)} \times \\
\notag &\qquad \quad \int_{\mathbb{R}^2 \setminus B\left(0,2^{j_0(1+\delta) } \right)}  \int_0^{2\pi} \left\lvert \widehat{\mu_g}\left(\eta_1 \sqrt{2} \gamma'(\theta) + \eta_2 \sqrt{2} \gamma(\theta) \times \gamma'(\theta) \right) \right\rvert^2 \, d\theta \, d \eta  \\
\label{annying} &\quad\lesssim 2^{-j_0\epsilon} \int_{\mathbb{R}^2}  \int_0^{2\pi} \lvert \eta\rvert^{s-2-50\sqrt{\epsilon}}  \left\lvert \widehat{\mu_g}\left(\eta_1 \gamma'(\theta) + \eta_2 \gamma(\theta) \times \gamma'(\theta) \right)\right\rvert^2 \, d\theta \, d \eta  \\
\notag &\qquad + 2^{j_0\left(s-2-100\sqrt{\epsilon}\right)} \times \\
\notag &\qquad \quad \int_{\mathbb{R}^2 \setminus B\left(0,2^{j_0(1+\delta) } \right)}  \int_0^{2\pi} \left\lvert \widehat{\mu_g}\left(\eta_1 \sqrt{2} \gamma'(\theta) + \eta_2 \sqrt{2} \gamma(\theta) \times \gamma'(\theta) \right) \right\rvert^2 \, d\theta \, d \eta.  \end{align}
The bound of \eqref{preannying} by \eqref{annying} used 
\[ 2^{j_0\left(s-2-100\sqrt{\epsilon}\right) } \leq 2^{-j _0 \epsilon} 2^{j_0(1+\delta)\left(s-2- 50\sqrt{\epsilon}\right)} \leq 2^{-j_0\epsilon} \left\lvert \eta \right\rvert^{s-2-50\sqrt{\epsilon} },\quad \left\lvert \eta\right\rvert \leq 2^{j_0(1+\delta)}. \] 
The second part of \eqref{annying} will be shown to be negligible. Since $\mu = \nu \ast \phi_{j_0}$ and $\mu_g = \mu-\mu_b$, the second integral in \eqref{annying} satisfies 
\begin{multline*} \int_{\mathbb{R}^2 \setminus B\left(0,2^{j_0(1+\delta) } \right)}  \int_0^{2\pi} \left\lvert \widehat{\mu_g}\left(\eta_1 \sqrt{2} \gamma'(\theta) + \eta_2 \sqrt{2} \gamma(\theta) \times \gamma'(\theta) \right) \right\rvert^2 \, d\theta \, d \eta \\
 \lesssim \int_{\mathbb{R}^2 \setminus B\left(0,2^{j_0(1+\delta) } \right)}  \int_0^{2\pi} \left\lvert \widehat{\phi_{j_0}}\left(\eta_1 \sqrt{2} \gamma'(\theta) + \eta_2 \sqrt{2} \gamma(\theta) \times \gamma'(\theta) \right) \right\rvert^2 \, d\theta \, d \eta \\
 +  \int_{\mathbb{R}^2 \setminus B\left(0,2^{j_0(1+\delta) } \right)}  \int_0^{2\pi} \left( \sum_{T \in \mathbb{T}'} \left\lvert \widehat{M_T\mu}\left(\sqrt{2}\eta_1  \gamma'(\theta) + \sqrt{2}\eta_2  \gamma(\theta) \times \gamma'(\theta) \right) \right\rvert\right)^2 \, d\theta \, d \eta, \end{multline*} 
where 
\[ \mathbb{T}' = \bigcup_{j=J}^{\infty} \bigcup_{k= \lceil j \epsilon \rceil}^j \bigcup_{\tau \in \Lambda_{j,k}} \mathbb{T}_{j,k, \tau, b}. \]

The first integral is $\lesssim_N 2^{-j_0N}$ since $\widehat{\phi_{j_0}}$ decays rapidly outside $B(0, 2^{j_0})$. To bound the second, by summing a geometric series it suffices to prove that 
\begin{multline} \label{pause339}  \int_{B(0,2R) \setminus B(0,R) } \int_0^{2\pi} \left( \sum_{T \in \mathbb{T}'_R 
} \left\lvert \widehat{M_T\mu}\left(\eta_1 \sqrt{2} \gamma'(\theta) + \eta_2 \sqrt{2} \gamma(\theta) \times \gamma'(\theta) \right) \right\rvert\right)^2 \, d\theta \, d \eta \\ \lesssim_N R^{-N},  \end{multline} 
and 
\begin{multline} \label{pause439}  \int_{B(0,2R) \setminus B(0,R) }   \int_0^{2\pi} \left( \sum_{T \in \mathbb{T}' \setminus \mathbb{T}'_R} \left\lvert \widehat{M_T\mu}\left(\eta_1 \sqrt{2} \gamma'(\theta) + \eta_2 \sqrt{2} \gamma(\theta) \times \gamma'(\theta) \right) \right\rvert\right)^2 \, d\theta \, d \eta \\ \lesssim_N R^{-N},  \end{multline} 
for each $R \geq 2^{j_0(1+\delta)}$, where $\mathbb{T}'_R$ consists of those $T\in \mathbb{T}'$ such that 
\begin{equation} \label{annulusR} 2\tau(T) \cap \left[ B(0,2R) \setminus B(0,R) \right] \neq \emptyset. \end{equation}
Since $\left\lvert\mathbb{T}'_R\right\rvert \lesssim R^{O(1)}$, to prove \eqref{pause339} it suffices to show that 
\begin{equation} \label{pause539} \int_{B(0,2R) \setminus B(0,R) }  \int_0^{2\pi}\left\lvert \widehat{M_T\mu}\left(\eta_1 \sqrt{2} \gamma'(\theta) + \eta_2 \sqrt{2} \gamma(\theta) \times \gamma'(\theta) \right) \right\rvert^2 \, d\theta \, d \eta \lesssim_N R^{-N},  \end{equation}
for each $T \in \mathbb{T}'_R$. The assumption in \eqref{annulusR} implies that $2\tau \subseteq \mathbb{R}^3 \setminus B\left( 0, R/1000 \right)$, so by the Schwartz decay of $\widehat{\phi_{j_0}}$,  
\[ \left\lvert \widehat{M_T \mu}(\xi) \right\rvert \lesssim_N R^{-N}, \quad  \xi \in \mathbb{R}^3. \]
%By breaking the domain of integration in \eqref{pause539} into the region corresponding to $2\tau$ and the complement of $2\tau$, and using \eqref{taudist} and \eqref{linfinbound}, 
This gives \eqref{pause539} and therefore \eqref{pause339}. For the remaining bound \eqref{pause439}, any $T \in \mathbb{T}'$ satisfies
\[ \left\lvert\widehat{\eta_T}\left(\xi-\xi'\right)\right\rvert \lesssim_N \dist(\xi, \tau)^{-N}, \quad \xi' \in \tau, \quad \xi \notin 2\tau, \]
and therefore 
\begin{equation} \label{taudist} \left\lvert \widehat{M_T\mu}(\xi) \right\rvert \lesssim_N \dist(\xi, \tau)^{-N}, \quad \xi \notin 2\tau. \end{equation}
 Applying Minkowski's inequality and then \eqref{taudist} to the left-hand side of \eqref{pause439} gives  
\begin{align*} &\int_{B(0,2R) \setminus B(0,R) }   \int_0^{2\pi} \left( \sum_{T \in \mathbb{T}' \setminus \mathbb{T}'_R} \left\lvert \widehat{M_T\mu}\left(\eta_1 \sqrt{2} \gamma'(\theta) + \eta_2 \sqrt{2} \gamma(\theta) \times \gamma'(\theta) \right) \right\rvert\right)^2 \, d\theta \, d \eta \\ 
&\lesssim_N \left( \sum_{T \in \mathbb{T}' \setminus \mathbb{T}'_R } \dist(\tau(T), B(0,2R) \setminus B(0,R) )^{-N} \right)^2. \end{align*}
Summing a geometric series gives \eqref{pause439}. Putting this all together shows that the second integral in \eqref{annying} is $\lesssim_N 2^{-j_0N}$. 
Hence
\begin{equation} \label{L2quant} \eqref{annying} \lesssim_N 2^{-j_0N} +  2^{-j_0\epsilon} \int_{\mathbb{R}^2} \int_0^{2\pi} \lvert \eta\rvert^{s-2-50\sqrt{\epsilon}} \left\lvert \widehat{\mu_g}\left( \eta_1\gamma'(\theta) + \eta_2 \gamma(\theta) \times \gamma'(\theta) \right) \right\rvert^2 \, d\theta \,  d\eta. \end{equation}
 When a change of variables is used to rewrite \eqref{L2quant} in terms of the Fourier energy of $\mu$, the corresponding Jacobian will blow up near the light cone. For this reason, the integral will be broken into two parts; one can be written in terms of the Fourier energy of $\mu$, the other behaves like the $L^2$-average of $\widehat{\mu_g}$ over the cone (at various scales). A key tool used in bounding these $L^2$ averages will be the cone decoupling theorem.

 Define $\kappa$ by
\begin{equation} \label{kappadefn} \kappa =1-\frac{\epsilon}{10^5}. \end{equation}
and define $s'$ by 
\begin{equation} \label{sdoubleprimedefn} s' = s - 50\sqrt{\epsilon} = \max\left\{\frac{4\alpha}{9} + \frac{5}{6}, \frac{2 \alpha + 1}{3} \right\} - 50\sqrt{\epsilon}, \end{equation}
where the second equality comes from the definition of $s$ in \eqref{sprimedefn}. The other parts being similar, it will suffice to bound the part of the integral in \eqref{L2quant} over the positive quadrant, which may be written as
\begin{align} \notag &\int_{\mathbb{R}^2_+} \int_0^{2\pi} \lvert \eta\rvert^{s'-2}\left\lvert \widehat{\mu_g}\left( \eta_1\gamma'(\theta) + \eta_2 \gamma(\theta) \times \gamma'(\theta) \right) \right\rvert^2 \, d\theta \,  d\eta \\
\notag &\quad = \int_{\eta_1 > \eta_2^{\kappa} \geq 0} \int_0^{2\pi} \lvert \eta\rvert^{s'-2}\left\lvert \widehat{\mu_g}\left( \eta_1\gamma'(\theta) + \eta_2 \gamma(\theta) \times \gamma'(\theta) \right) \right\rvert^2 \, d\theta \,  d\eta \\
\notag &\qquad + \int_{0 \leq \eta_1  \leq \eta_2^{\kappa}} \int_0^{2\pi} \lvert \eta\rvert^{s'-2}\left\lvert \widehat{\mu_g}\left( \eta_1\gamma'(\theta) + \eta_2 \gamma(\theta) \times \gamma'(\theta) \right) \right\rvert^2 \, d\theta \,  d\eta. \end{align}
The cross product $\gamma(\theta) \times \gamma'(\theta) = \frac{1}{2} (-\cos \theta, -\sin \theta, 1)$ is in the light cone, whilst $\gamma'(\theta)$ is also tangential to the cone at this same point. The domain of the first integral is therefore away from the cone, whilst the second (more difficult) integral has domain close to it.

By taking $J$ small in comparison to $j_0$ (see \eqref{Jchoice}), it will suffice to prove the following two inequalities.
\begin{claim} \label{L2bound1}
\begin{equation} \label{energy} \int_{\eta_1 > \eta_2^{\kappa} \geq 0} \int_0^{2\pi} \lvert \eta\rvert^{s'-2}\left\lvert \widehat{\mu_g}\left( \eta_1\gamma'(\theta) + \eta_2 \gamma(\theta) \times \gamma'(\theta) \right) \right\rvert^2 \, d\theta \,  d\eta \lesssim_{\epsilon} 2^{CJ}. \end{equation} \end{claim}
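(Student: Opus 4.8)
The plan is to pass from the left‑hand side of \eqref{energy} to the Riesz energy of $\nu$ by the substitution
\[ \xi = \eta_1\gamma'(\theta) + \eta_2 w(\theta), \qquad w(\theta) := \gamma(\theta)\times\gamma'(\theta) = \tfrac12(-\cos\theta,-\sin\theta,1), \]
and to exploit that on the region $\eta_1 > \eta_2^{\kappa}$ the resulting weight decays at a rate permitted by $c_{\alpha}(\nu) < \infty$. First I would record the properties of this substitution. Since $\gamma'(\theta)\perp w(\theta)$ with $|\gamma'(\theta)|^2 = |w(\theta)|^2 = \tfrac12$, one has $|\xi|^2 = \tfrac12|\eta|^2$, so $|\xi|\sim|\eta|$. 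Using $w'(\theta) = -\tfrac1{\sqrt2}\gamma'(\theta)$ gives $\partial_{\theta}\xi = \eta_1\gamma''(\theta) - \tfrac{\eta_2}{\sqrt2}\gamma'(\theta)$, and a direct computation (or $\det(\gamma',\gamma\times\gamma',\gamma'') = |\gamma'|^2(\gamma\cdot\gamma'') = -\tfrac14$, using $\gamma\perp\gamma'$) gives that the Jacobian of $(\eta_1,\eta_2,\theta)\mapsto\xi$ equals $-\tfrac14\eta_1$; in particular the map is a local diffeomorphism where $\eta_1 > 0$. It is moreover at most two‑to‑one, since $\xi$ determines $\theta$ through the single equation $\xi\cdot\gamma(\theta) = 0$ and then $(\eta_1,\eta_2)$ by a nondegenerate linear system.

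After the change of variables (with a constant factor for multiplicity), \eqref{energy} is $\lesssim \int_{\mathbb{R}^3} W(\xi)\,|\widehat{\mu_g}(\xi)|^2\,d\xi$, where the weight $W(\xi) \sim \frac{|\xi|^{s'-2}}{\eta_1}$ satisfies, on the region $\eta_1 > \eta_2^{\kappa}\geq 0$ and because $\kappa < 1$,
\[ W(\xi)\ \lesssim\ |\xi|^{s'-3}\ \text{ if } |\xi|\leq 1, \qquad W(\xi)\ \lesssim\ |\xi|^{s'-2-\kappa}\ \text{ if } |\xi|>1 \]
(if $\eta_1\geq\eta_2$ then $\eta_1\sim|\xi|$, while if $\eta_1<\eta_2$ then $\eta_2>1$, $|\xi|\sim\eta_2$, and $\eta_1>\eta_2^{\kappa}\gtrsim|\xi|^{\kappa}$). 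Thus it is enough to bound
\[ \mathrm{I} := \int_{|\xi|\leq1}|\xi|^{s'-3}|\widehat{\mu_g}(\xi)|^2\,d\xi, \qquad \mathrm{II} := \int_{|\xi|>1}|\xi|^{s'-2-\kappa}|\widehat{\mu_g}(\xi)|^2\,d\xi. \]

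For $\mathrm{I}$: every wave packet in $\mu_b$ has Fourier support (up to the rapid tails of \eqref{taudist}) in some $2\tau$ with $\tau\in\Lambda_{j,k}$, $j\geq J$, hence in $\{|\xi|\gtrsim 2^{J}\}$; since $J$ is large, \eqref{taudist} makes $\widehat{\mu_b}$ negligible on $\{|\xi|\leq1\}$. Together with $|\widehat{\mu}| = |\widehat{\nu}\,\widehat{\phi_{j_0}}| \leq \|\widehat{\phi}\|_{\infty}$, this gives $|\widehat{\mu_g}|\lesssim1$ on $\{|\xi|\leq1\}$, and since $s'-3 > -3$ the weight is locally integrable, so $\mathrm{I}\lesssim 1$. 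For $\mathrm{II}$, set $\beta := s'+1-\kappa$, so the weight is the Riesz weight $|\xi|^{\beta-3}$; by \eqref{sprimedefn}, \eqref{kappadefn} and \eqref{sdoubleprimedefn}, $\beta = s - 50\sqrt{\epsilon} + \epsilon/10^5$, and since $s - \alpha \leq 5\epsilon/9$ for every admissible $\alpha$, we have $\beta < \alpha$ once $\epsilon$ is small. Writing $|\widehat{\mu_g}|^2\lesssim|\widehat{\mu}|^2 + |\widehat{\mu_b}|^2$: the $\widehat{\mu}$ term gives $\lesssim\int_{\mathbb{R}^3}|\xi|^{\beta-3}|\widehat{\mu}|^2 \lesssim \|\widehat{\phi}\|_{\infty}^2\,I_{\beta}(\nu)\lesssim_{\alpha}1$ because $\nu$ is a probability measure with $c_{\alpha}(\nu)\lesssim1$ and $\beta<\alpha$. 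For the $\widehat{\mu_b}$ term, group the packets as $\widehat{\mu_b} = \sum_{j,k,\tau}\widehat{h_{j,k,\tau}}$ with $h_{j,k,\tau} := \bigl(\sum_{T\in\mathbb{T}_{j,k,\tau,b}}\eta_T\bigr)\mathcal{F}^{-1}(\psi_{\tau}\widehat{\mu})$; then $\widehat{h_{j,k,\tau}}$ is essentially supported (by \eqref{taudist}) in $2\tau\subset\{|\xi|\sim2^{j}\}$, and $\|\widehat{h_{j,k,\tau}}\|_2\leq\|\psi_{\tau}\widehat{\mu}\|_2$, since $0\leq\sum_{T\in\mathbb{T}_{j,k,\tau,b}}\eta_T\leq1$ as a partial sum of the partition of unity $\{\eta_T\}$. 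Using the bounded overlap of the boxes $2\tau$ (hence of the supports $1.1\tau$ of the $\psi_{\tau}$) and $\int_{|\xi|\sim2^{j}}|\widehat{\mu}|^2 \leq \|\widehat{\phi}\|_{\infty}^2\int_{|\xi|\sim2^{j}}|\widehat{\nu}|^2 \lesssim 2^{j(3-\alpha)}$, this yields $\int_{|\xi|>1}|\xi|^{\beta-3}|\widehat{\mu_b}|^2 \lesssim \sum_{j\geq J}2^{j(\beta-3)}\int_{|\xi|\sim2^{j}}|\widehat{\mu}|^2 \lesssim \sum_{j\geq J}2^{j(\beta-\alpha)}\lesssim 1$. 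Hence $\mathrm{II}\lesssim 1$, and \eqref{energy} $\lesssim 1 \lesssim 2^{CJ}$.

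All the estimates above are routine; the only genuinely delicate points are the bookkeeping of the change of variables — its Jacobian vanishes exactly on the light cone, which is precisely why the cutoff $\eta_1 > \eta_2^{\kappa}$ is placed here, keeping $1/\eta_1 \lesssim |\xi|^{1-\kappa}$ — and checking that discarding only the ``bad'' wave packets costs nothing. The latter is essentially free: away from the cone the available decay $|\xi|^{\beta-3}$ with $\beta < \alpha$ already lies within the range allowed by $c_{\alpha}(\nu) < \infty$, so no saving from the good--bad splitting is needed. (Alternatively, each wave packet of $\mu_b$ is supported within distance $\lesssim |\xi|^{1-\epsilon}$ of the cone while $\{\eta_1 > \eta_2^{\kappa}\}$ lies at distance $\gtrsim |\xi|^{2\kappa-1} \gg |\xi|^{1-\epsilon}$ from it, so $\widehat{\mu_b}$ is in fact negligible on the entire domain of integration, and one may simply replace $\mu_g$ by $\mu$.)
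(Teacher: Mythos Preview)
Your proof is correct and follows essentially the same route as the paper: compute the Jacobian $\bigl|\partial\xi/\partial(\eta_1,\eta_2,\theta)\bigr| = \eta_1/4$, use the constraint $\eta_1 > \eta_2^{\kappa}$ to bound the inverse Jacobian by $|\xi|^{-\kappa}$, and then control everything by the $(s'+1-\kappa)$-energy of $\mu$ (equivalently $\nu$), which is finite since $s'+1-\kappa<\alpha$.

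The only genuine difference is in how you dispose of $\mu_b$. The paper takes the route you mention parenthetically at the end: it computes (your distance estimate $\gtrsim |\xi|^{2\kappa-1}$ is exactly \eqref{conedistance}) that the domain $\{\eta_1>\eta_2^{\kappa}\}$ stays at distance $\gtrsim |\xi|^{1-\epsilon/10^3}$ from the light cone, while every cap in the definition of $\mu_b$ lies within $|\xi|^{1-\epsilon}$ of the cone, so $\widehat{\mu_b}$ is negligible on the whole domain and one may simply replace $\mu_g$ by $\mu$. Your primary argument instead splits $|\widehat{\mu_g}|^2 \lesssim |\widehat{\mu}|^2 + |\widehat{\mu_b}|^2$ and bounds the second piece by almost orthogonality of the $\psi_{\tau}$ together with the pointwise bound $\sum_{T}\eta_T\leq 1$; this also works (the Schwartz tails from \eqref{taudist} sum harmlessly), but it is more laborious and does not use the geometric separation from the cone that is the real reason the claim is easy. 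The paper's route is cleaner; your alternative is exactly it. One small cosmetic difference: the paper crudely bounds the region $|\eta|\lesssim 2^J$ by $|\widehat{\mu_g}|\lesssim 2^{CJ}$ (this is the source of the $2^{CJ}$ on the right), whereas you show $|\widehat{\mu_g}|\lesssim 1$ there and obtain a bound $\lesssim 1$, which is of course stronger.
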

\begin{claim} \label{L2bound2} 
\begin{equation} \label{singular} \int_{0 \leq \eta_1  \leq \eta_2^{\kappa}} \int_0^{2\pi} \lvert \eta\rvert^{s'-2}\left\lvert \widehat{\mu_g}\left( \eta_1\gamma'(\theta) + \eta_2 \gamma(\theta) \times \gamma'(\theta) \right) \right\rvert^2 \, d\theta \,  d\eta \lesssim_{\epsilon} 2^{CJ}. \end{equation} \end{claim}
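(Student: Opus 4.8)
The domain $\{\eta_1 > \eta_2^{\kappa} \geq 0\}$ keeps the frequency point $\eta_1\gamma'(\theta) + \eta_2 \gamma(\theta)\times\gamma'(\theta)$ at distance $\gtrsim |\eta|\cdot\eta_2^{\kappa-1}\cdot(\text{something})$ — more precisely a quantitative distance bound — away from the light cone $\Gamma_{\mathbb{R}}$, because $\gamma'(\theta)$ is the tangential direction and $\gamma(\theta)\times\gamma'(\theta)$ is the cone direction, so the "$\eta_1$" component pushes us transversally off the cone whenever $\eta_1$ dominates a power of $\eta_2$. The plan is to perform the change of variables $\xi = \eta_1\gamma'(\theta) + \eta_2\gamma(\theta)\times\gamma'(\theta)$, compute the Jacobian (which degenerates like a power of $\dist(\xi,\Gamma_{\mathbb{R}})$, or equivalently like a power of $\eta_1$), and thereby bound the left side of \eqref{energy} by a weighted $L^2$ integral of $\widehat{\mu_g}$ of the form $\int |\xi|^{a}\dist(\xi,\Gamma_{\mathbb{R}})^{-b}|\widehat{\mu_g}(\xi)|^2\,d\xi$ over the region where $\dist(\xi,\Gamma_{\mathbb{R}})$ is not too small. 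Since $\widehat{\mu_g} = \widehat{\mu} - \widehat{\mu_b}$ and $\mu_b$ is supported (in frequency) within $\mathcal{N}$-neighbourhoods of the cone, on the bulk of this region $\widehat{\mu_g}$ is essentially $\widehat{\mu}$ up to rapidly decaying errors; then one uses $c_{\alpha}(\mu)\lesssim 1$ to control $\int|\xi|^{\alpha-3}|\widehat{\mu}(\xi)|^2\,d\xi \lesssim I_{\alpha}(\mu) \lesssim 1$, checking that the exponents produced by the Jacobian together with $s' - 2$ are dominated by $\alpha - 3$ given the constraint $s < \min\{\alpha, \max\{(1+\alpha)/2, \alpha-1/2\}\}$-type bound built into \eqref{sprimedefn} and \eqref{alphastardefn}. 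The $2^{CJ}$ on the right absorbs the loss near the innermost frequency scales (frequencies of size $\lesssim 2^{J}$, where the wave packet decomposition starts) and near the cone where the truncation $\eta_1 > \eta_2^{\kappa}$ only barely holds. The main obstacle here is bookkeeping the Jacobian degeneration against the Sobolev weight $|\eta|^{s'-2}$ and verifying the resulting exponent inequality; conceptually it is the "easy" integral because it reduces to the Fourier energy of $\mu$.

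**Plan for Claim~\ref{L2bound2}.** This is the genuinely hard estimate: the domain $\{0 \leq \eta_1 \leq \eta_2^{\kappa}\}$ sits in a shrinking neighbourhood of the light cone, so one cannot just invoke the Fourier energy. The plan is to dyadically decompose: fix $\eta_2 \sim 2^{j}$ (this is the "radial" scale $R = 2^j$ on the cone), and then the condition $\eta_1 \lesssim 2^{j\kappa}$ places $\xi$ in the $2^{j\kappa}$-neighbourhood of $\Gamma_R$, i.e.\ (since $\kappa = 1-\epsilon/10^5$) essentially the full $R^{1-\epsilon/10^5}$-neighbourhood. Over each such conical slab, integrating in $\theta$ and $\eta_1$ recovers, after a change of variables, an $L^2$ average of $\widehat{\mu_g}$ over the $R^{1-\epsilon/10^5}$-neighbourhood of the truncated cone $\Gamma_R$. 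This is exactly where the wave packet decomposition of Subsection~\ref{subsectionBl}, the decoupling theorem for the cone, and the refined Strichartz inequality (Theorem~\ref{refinedstrichartz}, referenced in the text) come in: one writes $\widehat{\mu_g}$ restricted to the slab as a sum over wave packets $M_T\mu$ with $T$ \emph{good}, uses the definition \eqref{goodtubestau}--\eqref{badtubestau} of good tubes to bound each $\|M_T\mu\|$ via the Frostman condition on $\mu$, applies $\ell^2$-decoupling (at scale $2^{-k/2}$ after the rescaling described in the setup, which has fixed Jacobian) to pass from the slab norm to the sum of cap norms, and finally applies the refined Strichartz estimate together with the good-tube mass bound to control the sum of cap contributions. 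The "improvement due to localisation" technique cited from \cite{cho} is inserted precisely at the off-diagonal terms to gain the extra savings. Summing the dyadic pieces in $j$ (from $j \gtrsim J$) produces at most a factor $2^{CJ}$ from the lowest-frequency contributions where decoupling is lossy. The main obstacle — and the heart of the whole paper — is executing this wave packet / decoupling / refined Strichartz chain with the correct exponents so that the good-tube bound \eqref{badtubestau} (with its $\alpha^*$ and the power of $\delta$) exactly matches the decoupling loss and yields the target exponent $s' = \max\{4\alpha/9 + 5/6, (2\alpha+1)/3\} - 50\sqrt{\epsilon}$; this is where the $4\alpha/9 + 5/6$ term, which is new to this paper, must emerge from balancing the two estimates.
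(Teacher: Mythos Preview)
Your high-level plan for Claim~\ref{L2bound2} is the same route the paper takes: dyadic localisation near the cone, replacement of $\widehat{\mu_g}$ by a sum over good wave packets, the physical-space localisation trick, and then refined Strichartz (Theorem~\ref{refinedstrichartz}) paired with the good-tube mass bound. But there is a structural gap in how you decompose.

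You propose to fix only the radial scale $\eta_2 \sim 2^j$ and treat the resulting conical slab as one piece. The paper instead performs a \emph{double} dyadic decomposition: after the change of variables $(\eta_1,\eta_2) = (r\sqrt{1-t^2}, rt)$ it fixes $r \sim 2^j$ \emph{and} $1-t \sim 2^{-k}$, landing in the shell $\mathcal{N}_{C2^{-k}}(\Gamma) \setminus \mathcal{N}_{C^{-1}2^{-k}}(\Gamma_{\mathbb{R}})$ (see \eqref{preneighbourhood}--\eqref{surface}). This second parameter $k$ is not optional: the caps $\tau \in \Lambda_{j,k}$, the tubes $T \in \mathbb{T}_{j,k,\tau}$, the rescaling $\rho = 2^{j-k}$ in the localisation step \eqref{restrictwavepackets}--\eqref{offdiag3}, and the good-tube threshold \eqref{badtubestau} all depend on $k$, and refined Strichartz is applied at the single scale $R = 2^k$. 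The final arithmetic \eqref{finalbound} is a geometric sum in $k$ from $k \approx 2j(1-\kappa)$ to $k \approx j(1-\epsilon)$; the range $k > j(1-\epsilon)$ (the ``surface'' term \eqref{surface}) is handled separately. Without isolating each $k$-shell you cannot match the wave-packet scale $2^{-k/2}$ to the slab and the argument does not close.

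A second point you gloss over: $\mu_g$ is complex, so you cannot plug it into a fractal-weight bound directly. The paper extracts the positive measure $\mu$ by a Plancherel/Cauchy--Schwarz duality step \eqref{convolution}--\eqref{couchy}, then applies H\"older with $p=6$, $p'=3$ against the mollified localised measure $\mu_{m,k}$. The good-tube bound enters by controlling $\mu_{m,k}(4S)$ in the weight factor $\int_Y \mu_{m,k}^{p'/2}$ (see \eqref{secondterm}), not by bounding $\lVert M_T\mu\rVert$ as you suggest; and decoupling is used only inside the proof of Theorem~\ref{refinedstrichartz}, not as a separate step before it. This H\"older split is exactly what produces the exponent $\frac{1}{p'}(-3+2\alpha-\alpha^*) + \frac{1}{2}$ and hence the value $s'$ in \eqref{sdoubleprimedefn}.
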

The quantity $C$ is a large absolute constant, to be determined later (implicitly). 

To prove Claim~\ref{L2bound1}, let 
\begin{equation} \label{variablechange} \xi = \xi(\eta_1, \eta_2, \theta) = \eta_1\gamma'(\theta) + \eta_2 \gamma(\theta) \times \gamma'(\theta) . \end{equation}
The scalar triple product formula $\det(a,b,c) = \langle a \times b, c \rangle$ gives
\begin{multline} \label{jacobian10} \frac{\partial(\xi_1,\xi_2,\xi_3)}{\partial(\eta_1, \eta_2,\theta)}  = \det\begin{pmatrix} \gamma' & \gamma \times \gamma' & \eta_1 \gamma'' + \eta_2 \gamma \times \gamma'' \end{pmatrix} = \\
 \left \langle \gamma' \times (\gamma \times \gamma'),  \eta_1 \gamma'' + \eta_2 \gamma \times \gamma''  \right \rangle = \left \langle \gamma/2, \eta_1 \gamma'' + \eta_2 \gamma \times \gamma''  \right \rangle = \eta_1\left \langle \gamma/2, \gamma''  \right \rangle = \frac{- \eta_1}{4}. \end{multline}
 Hence
\begin{equation} \label{jacobian22} \left\lvert \frac{ \partial(\eta_1, \eta_2, \theta) } {\partial(\xi_1,\xi_2, \xi_3) } \right\rvert \gtrsim \left\lvert \xi \right\vert^{-\kappa}, \quad  \eta_1 > \eta_2^{\kappa} \geq 0, \quad \left\lvert \eta \right\vert \geq 1. \end{equation}

It will first be shown that if $2^j \leq \lvert \eta\rvert/ \sqrt{2} \leq 2^{j+1}$ for some $j \geq 0$, then the argument of $\widehat{\mu_g}$ in \eqref{energy} has distance $\gtrsim 2^{j\left(1-\epsilon/\left(10^3\right)\right)}$ from the light cone $\Gamma_{\mathbb{R}}$:
\begin{multline} \label{conedistance} \dist\left(\eta_1 \gamma'(\theta) +\eta_2 \gamma(\theta) \times \gamma'(\theta), \Gamma_{\mathbb{R}} \right) \gtrsim 2^{j\left( 1- \epsilon/\left(10^3\right) \right) }, \\\text{ for }  2^j \leq \lvert \eta\rvert/ \sqrt{2} \leq 2^{j+1}, \quad \eta_1 > \eta_2^{\kappa} \geq 0. \end{multline}
This follows from a calculation using the formula
\begin{equation} \label{distanceformula} \dist\left( (x,y,z), \Gamma_{\mathbb{R}} \right)  = \frac{1}{\sqrt{2}}\left\lvert \sqrt{x^2+y^2} - z \right\rvert, \quad z>0; \end{equation}
if $\eta_2 < \eta_1/100$ then \eqref{distanceformula} implies that the left-hand side of \eqref{conedistance} is $\gtrsim 2^j$, whilst if $\eta_2 \geq \eta_1/100$ then putting the left-hand side of \eqref{conedistance} into \eqref{distanceformula} and applying the inequality
\begin{equation} \label{sqrtaylor} \sqrt{1+x} -1 \gtrsim x, \quad 0 \leq x \sim 1, \end{equation}
followed by the equation $\kappa = 1-\epsilon/(10^5)$ (from \eqref{kappadefn}), gives the lower bound in \eqref{conedistance}.

If $j \geq J$, the function $\chi_{\lvert \xi \rvert \sim 2^j }(\xi)\widehat{\mu_b}(\xi)$ is rapidly decaying outside a $\sim 2^{j(1-\epsilon)}$-neighbourhood of $\Gamma_{\mathbb{R}}$ (by the condition $\dist\left(2\tau_{j,k}, \Gamma_{\mathbb{R}}\right) \lesssim 2^{j-k}$ satisfied by the caps in the sum defining $\mu_b$ (see \eqref{mubaddefn})). Hence if $j \geq J$, \eqref{conedistance} gives 
\[ \left\lvert \widehat{\mu_b}\left( \eta_1\gamma'(\theta) + \eta_2 \gamma(\theta) \times \gamma'(\theta) \right) \right\rvert \lesssim_N 2^{-jN}, \quad 2^j \leq \lvert \eta\rvert/ \sqrt{2} \leq 2^{j+1}, \quad \eta_1 > \eta_2^{\kappa} \geq 0, \]
for arbitrarily large $N$. By the definition $\mu_g = \mu - \mu_b$ of $\mu_g$, 
\begin{multline} \label{largeetabound} \left\lvert \widehat{\mu_g}\left( \eta_1\gamma'(\theta) + \eta_2 \gamma(\theta) \times \gamma'(\theta) \right) \right\rvert \lesssim_N \left\lvert \widehat{\mu}\left( \eta_1\gamma'(\theta) + \eta_2 \gamma(\theta) \times \gamma'(\theta) \right) \right\rvert + 2^{-jN}, \\
 \text{ for } 2^j \leq \lvert \eta\rvert/ \sqrt{2} \leq 2^{j+1}, \quad \eta_1 > \eta_2^{\kappa} \geq 0, \quad j \geq J. \end{multline}
For the other values $j \leq J$,
\begin{equation} \label{smalletabound} \left\lvert \widehat{\mu_g}\left( \eta_1\gamma'(\theta) + \eta_2 \gamma(\theta) \times \gamma'(\theta) \right) \right\rvert \lesssim 2^{CJ}, \quad \left\lvert \eta \right\rvert \lesssim 2^J, \end{equation}
for some large absolute constant $C$. This follows for instance by summing the triangle inequality over the trivial bound
\begin{equation} \label{asterisktrivial} \left\lVert \widehat{M_T\mu} \right\rVert_{L^{\infty} } \lesssim m(T) m(\tau(T)), \end{equation}
for each $\widehat{M_T\mu}$ with $\tau_{j,k}(T)$ satisfying $2^j \lesssim 2^J$, where $m$ is the Lebesgue measure. The other terms can be handled by rapid decay.

 Using \eqref{jacobian22}, \eqref{largeetabound}, \eqref{smalletabound}, and applying the change of variables from \eqref{variablechange} to the integral in \eqref{energy}, results in
\begin{align} \notag &\int_{\eta_1 > \eta_2^{\kappa} \geq 0} \int_0^{2\pi} \lvert \eta\rvert^{s'-2}\left\lvert \widehat{\mu_g}\left( \eta_1\gamma'(\theta) + \eta_2 \gamma(\theta) \times \gamma'(\theta) \right) \right\rvert^2 \, d\theta \,  d\eta \\
\notag  &\quad \lesssim_{\epsilon} 2^{CJ}+  \int_{\mathbb{R}^3} \lvert \xi\rvert^{s'-2-\kappa} \left\lvert \widehat{\mu}\left( \xi \right) \right\rvert^2 \,  d\xi \lesssim_{\epsilon} 2^{CJ}, \end{align}
since $s'-2-\kappa < \alpha-3$ by the definition of $\kappa$ (see \eqref{kappadefn}) and since $\alpha > 3/2$. This finishes the proof of Claim~\ref{L2bound1}.

To prove Claim~\ref{L2bound2}, consider the change of variables
\begin{equation} \label{rchange} (\eta_1, \eta_2) = \left( r \sqrt{1-t^2}, rt \right), \quad r > 0, \quad 0 \leq t \leq 1.  \end{equation}
%\begin{equation} \label{rchange} r^2 = \eta_1^2 + \eta_2^2, \quad \eta_2 = rt, \end{equation}
with Jacobian 
\begin{equation} \label{jacobian20} %\left\lvert\frac{ \partial(r,t)}{\partial(\eta_1,\eta_2)} \right\rvert
 \left\lvert\frac{\partial(\eta_1,\eta_2)}{\partial(r,t)} \right\rvert = \frac{r}{\sqrt{1-t^2} }. %\frac{\sqrt{1-t^2}}{r}.
 \end{equation}
Let 
\begin{equation} \label{gammatdefn}  \gamma_t(\theta) =  \sqrt{1-t^2}\gamma'(\theta) + t\gamma(\theta) \times \gamma'(\theta). \end{equation}

Using the change of variables from \eqref{variablechange}, and the one from \eqref{rchange}, it will be shown that the integral in \eqref{singular} satisfies
\begin{align} \label{dummylabel2} & \int_{0 \leq \eta_1 \leq \eta_2^{\kappa} } \int_0^{2\pi} \lvert \eta\rvert^{s'-2} \left\lvert \widehat{\mu_g}\left( \eta_1\gamma'(\theta) + \eta_2 \gamma(\theta) \times \gamma'(\theta) \right) \right\rvert^2 \, d\theta \,  d\eta \\ 
\label{dummylabel} &\quad \lesssim_{\epsilon} 1 + \sum_{j \geq (1-\kappa)^{-1}} \sum_{k=\left\lfloor 2j(1-\kappa) \right\rfloor-2}^{\left\lfloor j (1-\epsilon) \right\rfloor} \\
\label{preneighbourhood} &\qquad \int_{1-2^{-k}}^{1-2^{-(k+1)}}  \int_{2^{j-1}}^{2^j} \int_0^{2\pi} \frac{2^{j(s'-1)}}{\sqrt{1-t^2}} \left\lvert \widehat{\mu_g}\left( r  \gamma_t(\theta) \right) \right\rvert^2 \, d\theta \, dr \, dt \\
\label{presurface} &\qquad +  \sum_{j \geq (1-\kappa)^{-1}} \int_{1-2^{-j(1-\epsilon)}}^1 \int_{2^{j-1}}^{2^j} \int_0^{2\pi} \frac{2^{j(s'-1)}}{\sqrt{1-t^2}} \left\lvert \widehat{\mu_g}\left( r  \gamma_t(\theta) \right) \right\rvert^2 \, d\theta \, dr \, dt \\
\notag &\quad \lesssim_{\epsilon} 1 + \\
\label{neighbourhood}  &\qquad  \sum_{j \geq  (1-\kappa)^{-1}} \sum_{k=\left\lfloor 2j(1-\kappa) \right\rfloor-2}^{\left\lfloor j (1-\epsilon) \right\rfloor} 2^{js' + k/2} \int_{\mathcal{N}_{C2^{-k}}(\Gamma) \setminus \mathcal{N}_{C^{-1} 2^{-k}} (\Gamma_{\mathbb{R}})}  \left\lvert \widehat{\mu_g}\left( 2^j \xi \right) \right\rvert^2 \,   d\xi \\
\label{surface} &\qquad+  \sum_{j \geq (1-\kappa)^{-1}} \int_{1-2^{-j(1-\epsilon)}}^1 \int_{2^{j-1}}^{2^j} \int_0^{2\pi} \frac{2^{j(s'-1)}}{\sqrt{1-t^2}} \left\lvert \widehat{\mu_g}\left( r  \gamma_t(\theta) \right) \right\rvert^2 \, d\theta \, dr \, dt, \end{align}
where $C$ is a large constant to be chosen in a moment. To simplify notation $\Gamma$ is used here to denote the set of points $(\xi,\left\lvert \xi\right\rvert)$ in $\mathbb{R}^3$ with $1/10 \leq \left\lvert \xi \right\rvert \leq 10$, and $\Gamma_{\mathbb{R}}$ is the entire light cone. The purpose of this decomposition is to concentrate the integral on regions where $\left\lvert \xi \right\rvert$ and the Jacobian are roughly constant. The first bound, in \eqref{dummylabel}, \eqref{preneighbourhood} and \eqref{presurface}, is a consequence of the change of variables in \eqref{rchange} and a dyadic decomposition. To be more precise, the part of the integral in \eqref{dummylabel2} with $\lvert \eta \rvert \lesssim 2^{(1-\kappa)^{-1}}$ is absorbed into the $\lesssim_{\epsilon} 1$ term in \eqref{dummylabel} (since $\kappa$ depends only on $\epsilon$; see \eqref{kappadefn}). This explains the condition $j \geq (1-\kappa)^{-1}$ in the summation in \eqref{dummylabel}, which arises after changing variables in \eqref{dummylabel2}, using \eqref{rchange}, and dyadically decomposing the values of $r$. Given $j$, the summation in $k$ comes from then decomposing the domain of the integral in $t$ according to the dyadic value of $1-t$. The integral over the range $1-2^{-j(1-\epsilon)} \leq t \leq 1$ is handled by \eqref{presurface}, which justifies the upper restriction $k \leq \left\lfloor j(1-\epsilon) \right\rfloor$ on the summation in $k$, in \eqref{preneighbourhood}. It remains to justify the lower restriction $k \geq \left\lfloor 2j(1-\kappa) \right\rfloor-2$ (in place of the default restriction $k \geq 0$). The condition $\eta_1 \leq \eta_2^{\kappa}$ on the domain in \eqref{dummylabel2} can be written as 
\[ r \sqrt{1-t^2} \leq r^{\kappa} t^{\kappa}. \]
Applying the trivial bounds $t^{\kappa} \leq 1$ and $\sqrt{1-t} \leq \sqrt{1-t^2}$ to this yields  
\[ t \geq 1- r^{2(\kappa-1)}. \]
Since $r \geq 2^{j-1}$ for a given fixed $j$, and $t \leq 1-2^{-(k+1)}$ for a given fixed $k$, this implies that
\[ k \geq 2(j-1)(1-\kappa) -1 \geq 2j(1-\kappa)  -2, \]
which explains the lower bound $k\geq \left\lfloor 2j(1-\kappa) \right\rfloor-2$ in the summation over $k$. This proves that \eqref{dummylabel2} is bounded by \eqref{dummylabel}, \eqref{preneighbourhood} and \eqref{presurface}.

To prove the second bound it suffices to show that each summand in \eqref{preneighbourhood} is bounded by the corresponding one in \eqref{neighbourhood}. This will be done by changing variables in each summand (using \eqref{jacobian10} and \eqref{jacobian20}), and verifying that the condition
\begin{equation} \label{equality} r\sqrt{1-t^2}\gamma'(\theta) + rt\gamma(\theta) \times \gamma'(\theta) = 2^j \xi,  \end{equation}
where 
\[  2^{j-1} \leq r \leq 2^j, \quad 2^{-(k+1)} \leq 1-t \leq 2^{-k}, \]
implies that $\xi \in \mathcal{N}_{C2^{-k}}(\Gamma)  \setminus \mathcal{N}_{C^{-1} 2^{-k}} (\Gamma_{\mathbb{R}})$, provided $C$ is large enough. Division of \eqref{equality} by $2^j$ gives 
\[ \xi = \lambda_1 \gamma'(\theta) + \lambda_2 \gamma(\theta) \times \gamma'(\theta),  \quad 2^{-2} \cdot 2^{-k/2} \leq \lambda_1 \leq 2 \cdot 2^{-k/2}, \quad  \frac{1}{4} \leq \lambda_2 \leq 1. \]
if $k \geq 1$; if $k=0$ this still holds but without the lower bound $\lambda_2 \geq 1/4$. Applying \eqref{distanceformula} to this gives 
\[ \dist(\xi, \Gamma_{\mathbb{R}}) = \frac{1}{\sqrt{2}} \left( \sqrt{ \left( \frac{\lambda_1}{\sqrt{2}} \right)^2 + \left( \frac{\lambda_2}{2} \right)^2 } - \frac{\lambda_2}{2} \right) \sim \lambda_1^2 \sim 2^{-k}, \]
if $k  \geq 1$, and if $k=0$ this still holds by \eqref{sqrtaylor} since then $\lambda_2 \lesssim \lambda_1$. This verifies that \eqref{preneighbourhood} is bounded by \eqref{neighbourhood}.
%By computing the cross product explicitly, this can be written as 
%\begin{align} \label{lower3} \xi &= \frac{\lambda_2}{\sqrt{2}} \gamma(\theta+\pi) -\lambda_1\gamma'(\theta+\pi) \\
%\label{lower4} &= \frac{\lambda_2}{\sqrt{2}}\gamma(\theta+\pi+h) + O\left(h^2\right), \quad h = \frac{-\lambda_1 \sqrt{2}}{\lambda_2}, \end{align}
%where the notation $O\left(h^2\right)$ means the error is a vector in $\mathbb{R}^3$ of magnitude $\lesssim h^2$, and the implicit constant in $O\left(h^2\right)$ is uniform. Then $h$ satisfies $\lvert h\rvert \lesssim \lambda_1$, and therefore $\dist(\xi, \Gamma) \lesssim \lambda_1^2 \lesssim 2^{-k}$ by \eqref{lower4}. Hence $\xi \in \mathcal{N}_{C2^{-k}}(\Gamma)$ provided $C$ is chosen large enough.  Moreover, \eqref{lower3}, the lower bound on $\lambda_1$ in \eqref{perturb} and the curvature of the cone imply that $\dist(\xi, \Gamma_{\mathbb{R}}) \gtrsim \lambda_1^2 \gtrsim 2^{-k}$, and so $\xi \notin \mathcal{N}_{C^{-1}2^{-k}}(\Gamma_{\mathbb{R}})$ provided $C$ is chosen large enough. This verifies that \eqref{preneighbourhood} is bounded by \eqref{neighbourhood}.

It remains to bound the sums in \eqref{neighbourhood} and \eqref{surface}. The terms in \eqref{neighbourhood} will be bounded first; the terms in \eqref{surface} will be similar to those terms in \eqref{neighbourhood} with $k$ close to $j$. Fix a pair $(j,k)$ occurring in \eqref{neighbourhood}, such that the corresponding $j$ satisfies $j \geq 2J$ (the other terms will be bounded trivially). By \eqref{unitypartition} and the definition $\mu_g = \mu-\mu_b$ of $\mu_g$, the integral in the summand of \eqref{neighbourhood} satisfies
\begin{align}\label{placeholder} &\int_{\mathcal{N}_{C2^{-k}}(\Gamma) \setminus \mathcal{N}_{C^{-1}2^{-k}}(\Gamma_{\mathbb{R}})}  \left\lvert \widehat{\mu_g}\left( 2^j \xi \right) \right\rvert^2 \,   d\xi \\
\notag &\quad = \frac{1}{2^{3j}} \int_{\mathcal{N}_{C2^{j-k}}(2^j\Gamma) \setminus \mathcal{N}_{C^{-1}2^{j-k}}(\Gamma_{\mathbb{R}})}  \left\lvert \widehat{\mu_g}\left( \xi \right) \right\rvert^2 \,   d\xi \\
\label{asteriskk} &\quad = \frac{1}{2^{3j}} \int_{\mathcal{N}_{C2^{j-k}}(2^j\Gamma) \setminus \mathcal{N}_{C^{-1}2^{j-k}}(\Gamma_{\mathbb{R}})}  \left\lvert \sum_{\substack{j' \\
2^{j'} \sim 2^j}} \sum_{\substack{k' \\
2^{k'} \sim 2^k} } \sum_{\tau \in \Lambda_{j',k'}} \sum_{\substack{T \in \mathbb{T}_{j',k', \tau }}} \widehat{M_T\mu}(\xi) \right. \\
\notag &\qquad \left.  -\sum_{j'=J}^{\infty} \sum_{k'= \lceil j' \epsilon \rceil}^{j'} \sum_{\tau \in \Lambda_{j',k'}} \sum_{T \in \mathbb{T}_{j',k', \tau, b}} \widehat{M_T\mu}(\xi) \right\rvert^2 \,   d\xi  + O\left( 2^{-kN} \right) \\
\label{nonerror} &\lesssim \frac{1}{2^{3j}} \int_{\mathcal{N}_{C2^{j-k}}(2^j\Gamma) \setminus \mathcal{N}_{C^{-1}2^{j-k}}(\Gamma_{\mathbb{R}})}  \left\lvert \sum_{\substack{j' \\
2^{j'} \sim 2^j}} \sum_{\substack{k' \\
2^{k'} \sim 2^k} } \sum_{\tau \in \Lambda_{j',k'}} \sum_{\substack{T \in \mathbb{T}_{j',k', \tau,g}}} \widehat{M_T\mu}(\xi) \right\rvert^2  \\
\label{errorterms} &\qquad + \frac{1}{2^{3j}} \left( \sum_{T \in \mathbb{T}''} \left(\int_{\mathcal{N}_{C2^{j-k}}(2^j\Gamma) \setminus \mathcal{N}_{C^{-1}2^{j-k}}(\Gamma_{\mathbb{R}})} \left\lvert \widehat{M_T\mu}(\xi) \right\rvert^2 \,   d\xi\right)^{1/2} \right)^2 + O\left( 2^{-kN} \right), \end{align}
where $\mathbb{T}''$ consists of those $T$ in 
\[ \bigcup_{j'=J}^{\infty} \bigcup_{k'= \lceil j' \epsilon \rceil}^{j'} \bigcup_{\tau \in \Lambda_{j',k'}} \mathbb{T}_{j',k', \tau, b}, \]
such that $2\tau(T)$ does not intersect $\mathcal{N}_{C2^{j-k}}(\Gamma) \setminus \mathcal{N}_{C^{-1}2^{j-k}}(\Gamma_{\mathbb{R}})$. The notation $2^{j'} \sim 2^j$ and $2^{k'} \sim 2^k$ means that 
\[ \lvert j-j'\rvert \leq C', \quad \lvert k-k'\rvert \leq C', \]
for some large constant $C'$ depending only on $C$. In \eqref{asteriskk} $\widehat{\mu}$ was first replaced by 
\begin{equation} \label{lastpause} \sum_{\substack{j' \\
2^{j'} \sim 2^j}} \sum_{\substack{k' \\
2^{k'} \sim 2^k} } \sum_{\tau \in \Lambda_{j',k'}} \psi_{\tau} \widehat{\mu}, \end{equation}
which is valid since the $\psi_{\tau}$'s form a partition of unity (see \eqref{unitypartition}), and the $\psi_{\tau}$'s coming from those $(j',k')$ not occurring in \eqref{lastpause} are vanishing on the domain of integration in \eqref{asteriskk}. It was then used that each $\psi_{\tau} \widehat{\mu}$ is equal to $\sum_{T \in \mathbb{T}_{j', k', \tau}} \widehat{M_T \mu} + O\left(2^{-kN}\right)$.  Furthermore, the assumption $j \geq 2J$ was needed in \eqref{nonerror} to restrict the sum to the ``good'' tubes.  By applying \eqref{taudist} and summing a geometric series, the term in \eqref{errorterms} is $\lesssim_N 2^{-kN}$. 

For the remaining term in \eqref{nonerror}, let $\rho = 2^{j-k}$ and define 
%$\mu_{g,\rho^*}$ by $\widehat{\mu_{g,\rho^*}}(\xi) = \widehat{\mu_g}(\rho \xi)$. The function $\mu_g$ is supported in $B(0,100)$ by construction, and so  $\mu_{g, \rho^*} = \rho^{-3}\mu_g( \cdot/ \rho)$ is supported in $B(0, 100 \rho)$. Correspondingly, define 
$M_{T, \rho^*}\mu$ by $\widehat{M_{T, \rho^*}\mu}(\xi) =\widehat{ M_T\mu}(\rho \xi)$. Define $\eta_{T, 1/\rho}$ by $\eta_{T,1/\rho}(x) = \eta_T(x/\rho)$, and define $\psi_{\tau,\rho}$ by $\psi_{\tau,\rho}(\xi) = \psi_{\tau}(\rho \xi)$. Let $\{B_m\}_m$ be a finitely overlapping cover of $B(0,100\rho)$ by unit balls and let $\{\vartheta_m\}_m$ be a smooth partition of unity subordinate to this cover. Let $\phi$ be a non-negative smooth function supported in $\mathcal{N}_{2C}(2^k \Gamma) \setminus \mathcal{N}_{(2C)^{-1}}(\Gamma_{\mathbb{R}})$, with $\phi \sim 1$ on $\mathcal{N}_C(2^k \Gamma) \setminus \mathcal{N}_{C^{-1}}(\Gamma_{\mathbb{R}})$, such that 
\begin{equation} \label{schwartzdecay} \left\lvert \widecheck{\phi}(x) \right\rvert \lesssim_N 2^{2k} \lvert x\rvert^{-N}, \quad x \in \mathbb{R}^3. \end{equation}
Such a function can be constructed by summing over a smooth partition of unity subordinate to a finitely overlapping cover of $\mathcal{N}_C(2^k \Gamma) \setminus \mathcal{N}_{C^{-1}}(\Gamma_{\mathbb{R}})$ by unit balls. This set has volume $\lesssim 2^{2k}$, so pairing the triangle inequality with the Schwartz decay of each function in the partition gives \eqref{schwartzdecay}. Define
\[ \mu_{g,j,k,\rho^*} = \sum_{\substack{j' \\
2^{j'} \sim 2^j}} \sum_{\substack{k' \\
2^{k'} \sim 2^k} } \sum_{\tau \in \Lambda_{j',k'}} \sum_{\substack{T \in \mathbb{T}_{j',k', \tau, g} }} M_{T, \rho^*} \mu. \]
Then by changing variables and using that $\eqref{errorterms} \lesssim_N 2^{-kN}$, 
\begin{align}  %&\lesssim \int \phi\left(2^k \xi\right) \left\lvert \widehat{\mu_{g,\rho^*}}\left( 2^k \xi \right) \right\rvert^2 \,   d\xi \\
\label{restrictwavepackets} &\eqref{placeholder} \lesssim_N \frac{1}{2^{3k}}\int \phi\left(\xi\right) \left\lvert \sum_m  \mathcal{F}\left(\vartheta_m\mu_{g,j,k,\rho^*}\right)\left(\xi\right) \right\rvert^2 \,   d\xi +2^{-kN} \\
\label{diag3} &\quad \lesssim \frac{\rho^{3\epsilon^2}}{2^{3k}} \sum_m \int \phi(\xi) \left\lvert \mathcal{F}\left(\vartheta_m \mu_{g,j,k,\rho^*}\right)\left(\xi\right) \right\rvert^2 \, d\xi  \\
\label{offdiag3} &\qquad + \frac{1}{2^{3k}}\sum_{\dist(B_m, B_n) \geq \rho^{\epsilon^2}} \left\lvert \int \phi(\xi)\mathcal{F}\left(\vartheta_m \mu_{g,j,k\rho^*}\right)(\xi ) \overline{\mathcal{F}\left(\vartheta_n \mu_{g,j,k,\rho^*}\right)(\xi )} \, d\xi  \right\rvert \\
\notag &\qquad \quad + 2^{-kN}. \end{align}
% The restriction of the wave packets in \eqref{restrictwavepackets} to only those occurring in \eqref{annoydefn} is where the separation of the support of $\phi$ from the cone is used; when $k$ is very close to $j$ the separation is unnecessary. 
%By rapid decay the other wave packets are negligible on the domain of integration in \eqref{restrictwavepackets}, and are absorbed into the error term $2^{-kN}$. 
%The restrictions $2^j \sim 2^{j'}$ and $2^k \sim 2^{k'}$ in \eqref{annoydefn} refer to the notation in \eqref{obvious20}. 
By Plancherel and by \eqref{schwartzdecay}, the summands in \eqref{offdiag3} satisfy 
\begin{align} \notag  &\left\lvert \int \phi(\xi)\mathcal{F}\left(\vartheta_m \mu_{g,j,k,\rho^*}\right)(\xi ) \overline{\mathcal{F}\left(\vartheta_n \mu_{g,j,k,\rho^*}\right)}(\xi ) \, d\xi  \right\rvert \\
\notag &\quad= \left\lvert \int \int \widecheck{\phi}(x-y) \vartheta_m(x) \vartheta_n(y) \, d\mu_{g,j,k,\rho^*}(x) \, d\overline{\mu_{g,j,k,\rho^*}}(y) \right\rvert  \\
\label{offdiag4} &\quad\lesssim_N  2^{O(j)}\rho^{-\epsilon^2 N}, \end{align}
where the last line used the trivial bound (see \eqref{asterisktrivial}):
\[ \left\lVert \mu_{g,j,k,\rho} \right\rVert_{\infty} \leq 2^{O(j)}. \]
By the constraint $k \leq j(1-\epsilon)$, $\rho$ is at least as large as $2^{j\epsilon}$, so by taking $N$ much larger than $\epsilon^{-3}$, the sum over the terms in \eqref{offdiag3} is $\lesssim_{\epsilon} 2^{-100j}$, which will be better than the bound obtained on the other part and hence absorbed into it. The sum in \eqref{diag3} will be controlled through the following.
\begin{claim} \label{L2bound3} Fix $j$, $k$ and $m$ as in \eqref{diag3}. Let $p=6$ and $p'=3$.  If $j \geq 2J$ then for arbitrarily large $N$, 
\[ \int \phi(\xi) \left\lvert \mathcal{F} \left(\vartheta_m \mu_{g,j,k,\rho^*}\right)\left(\xi\right) \right\rvert^2 \, d\xi \lesssim_{\epsilon, N} 2^{k \left( 3-\frac{3}{p'} + \frac{2\alpha}{p'} - \frac{\alpha^*}{p'} + 5\epsilon \right)-j \alpha} \lVert \mu_m\rVert + 2^{-kN}, \]
where $\mu_m$ is the restriction of $\rho_{\#}\mu$ to $2^{10k\delta}B_m$. \end{claim}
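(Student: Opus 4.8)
\emph{Proof proposal.} The plan is to peel off the cutoff $\phi$ by H\"older's inequality, rescale to the unit cone, and then invoke the refined Strichartz inequality (Theorem~\ref{refinedstrichartz}) in its measure--weighted form, feeding in the defining ``good tube'' inequality \eqref{badtubestau} to produce the $\alpha^{*}$--dependence.

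\emph{Reduction and rescaling.} Write $h=\vartheta_m\mu_{g,j,k,\rho^{*}}$. Up to an $O(2^{-kN})$ error, $\widehat h$ is supported in the $O(1)$--neighbourhood of the cone $2^{k}\Gamma$: each rescaled cap $\rho^{-1}(1.1\tau)$ with $2^{j'}\sim 2^{j}$, $2^{k'}\sim 2^{k}$ is a box of dimensions $\sim 2^{k/2}\times 2^{k}\times 1$ lying at distance $\sim 1$ from $\Gamma_{\mathbb R}$, and multiplication by the unit--scale bump $\vartheta_m$ only broadens the Fourier support by $O(1)$. H\"older's inequality with the conjugate pair $p'=3$ and $(p')'=3/2$, applied to $|\widehat h|^{2}\in L^{p'}$ and $\phi\in L^{(p')'}$, then gives $\int\phi|\widehat h|^{2}\le\|\widehat h\|_{L^{2p'}}^{2}\,\|\phi\|_{L^{(p')'}}\lesssim 2^{4k/3}\|\widehat h\|_{L^{6}}^{2}$, since $\supp\phi$ has volume $\sim 2^{2k}$ and $2p'=p=6$. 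A parabolic rescaling by $2^{k}$ now turns $h$ into a function whose Fourier transform lives in the $2^{-k}$--neighbourhood of the unit cone, decomposed into $\sim 2^{k/2}$ standard caps, while $\mu_m$ becomes a measure $\widetilde\mu_m$ supported in a ball of radius $O(2^{k\delta})$ with total mass $\|\mu_m\|$ and $c_{\alpha}(\widetilde\mu_m)\lesssim 2^{-j\alpha}$; this is why the final bound carries a $2^{-j\alpha}$ and is \emph{linear} in $\|\mu_m\|$.

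\emph{The weighted cone estimate.} On the rescaled object I would run the $\ell^{2}$--decoupling theorem for the cone at its critical exponent $p=6$ together with the wave--packet structure, i.e.\ apply the refined Strichartz inequality (Theorem~\ref{refinedstrichartz}) in its measure--weighted form, so that $\|\mu_m\|$ enters to the first power. The single--cap inputs come from the trivial wave--packet bounds: $\|M_{T,\rho^{*}}\mu\|_{1}\lesssim\mu(2T)+2^{-kN}$, and, by local constancy at the dual--cap scale, $\|M_{T,\rho^{*}}\mu\|_{\infty}\lesssim 2^{3k/2}\mu(2T)$ (the dual statement of \eqref{asterisktrivial}). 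Here the good--tube hypothesis \eqref{badtubestau}, namely $\mu(10T)<100\cdot 2^{\frac k2(100\delta-\alpha^{*})}2^{-\alpha(j-k)}$, upgrades the $L^{\infty}$ bound to the \emph{uniform} bound $\|M_{T,\rho^{*}}\mu\|_{\infty}\lesssim 2^{3k/2}2^{\frac k2(100\delta-\alpha^{*})-\alpha(j-k)}$, which is the source of the $2^{-\alpha^{*}/p'}$ factor; summing $\mu(2T)$ over the good tubes in a fixed cap that meet $2^{10k\delta}B_m$ gives $\lesssim\|\mu_m\|$ by finite overlap and the Frostman bound $c_{\alpha}(\mu)\lesssim 1$. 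Collecting these, together with the $2^{2k}$ from $\supp\phi$ and the dimensional gain built into Theorem~\ref{refinedstrichartz}, reproduces the exponent $3-3/p'+2\alpha/p'-\alpha^{*}/p'$; the $2^{O(k\delta)}$ and $2^{O(k\epsilon)}$ losses are absorbed into $2^{5k\epsilon}$ using the hierarchy $\delta\ll\epsilon$ fixed earlier, and the hypothesis $j\ge 2J$ is used precisely to guarantee that $\mu-\mu_b$ restricted to the relevant caps is exactly the sum of the good wave packets (see \eqref{mubaddefn}). The complementary range $J\le j<2J$, where all relevant caps satisfy $2^{j}\lesssim 2^{J}$, is handled trivially from \eqref{asterisktrivial} and rapid decay, the error being absorbed by later taking $J$ small in terms of $j_0$ as in \eqref{Jchoice}.

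\emph{Main obstacle.} The crux is getting $\|\mu_m\|$ to the \emph{first} power: a naive combination of cone decoupling with $L^{1}$--$L^{\infty}$ interpolation of the wave packets, or even refined decoupling with a multiplicity factor, only yields $\|\mu_m\|^{1/p}$, which is hopeless once one sums over the $\sim 2^{3(j-k)}$ balls $B_m$ in \eqref{diag3}; hence it is essential that Theorem~\ref{refinedstrichartz} is a genuinely measure--linear weighted estimate rather than an unweighted $L^{p}$ bound. A secondary technical point is the localisation step inside the proof, \eqref{diag20}--\eqref{offdiag20}: one localises the rescaled $h$ to balls at the natural decoupling scale, discards well--separated pairs using the rapid decay \eqref{schwartzdecay} of the cone--multiplier kernel, and applies the weighted estimate only on the near--diagonal part, again keeping every $\delta$-- and $\epsilon$--dependent loss subordinate to $2^{5k\epsilon}$.
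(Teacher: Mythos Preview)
There is a genuine gap. Your H\"older step on the Fourier side produces $\lVert \widehat{h}\rVert_{L^6}^2$, but Theorem~\ref{refinedstrichartz} is a \emph{physical-side} bound on $\lVert f\rVert_{L^p(Y)}$ for $f$ with Fourier support near the cone; it says nothing about $\lVert \widehat{f}\rVert_{L^p}$. Hausdorff--Young goes the wrong way here, so there is no bridge from your $\lVert \widehat{h}\rVert_{L^6}$ to the refined Strichartz input. More importantly, Theorem~\ref{refinedstrichartz} as stated and proved in the paper is \emph{not} measure-weighted: it is an unweighted $L^p(Y)$ estimate with the gain factor $(M/\lvert\mathbb{W}\rvert)^{1/2-1/p}$. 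Your assertion that ``it is essential that Theorem~\ref{refinedstrichartz} is a genuinely measure-linear weighted estimate'' is simply false about the theorem you are citing, and you give no mechanism to derive such a weighted form.

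The paper's route to linearity in $\lVert\mu_m\rVert$ is a bootstrap you do not have. One first drops $\phi$ (it is bounded) and uses essential orthogonality to bound the left-hand side by $X:=\sum_T \lVert M_{T,\rho^*}\mu\rVert_2^2$. Then, and this is the key step you miss, one rewrites $X$ by Plancherel as an integral of a sum of wave packets $f=\sum_S f_S$ against the \emph{positive} measure $\rho_\#\mu$ (see \eqref{convolution}); this is the duality that extracts $\mu$ from $\mu_g$. Cauchy--Schwarz against $\mu_m$ gives $X\lesssim \lVert\mu_m\rVert^{1/2}\bigl(\int\lvert f\rvert^2\,d\mu_m\bigr)^{1/2}$. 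After mollifying and pigeonholing, H\"older on the \emph{physical} side splits $\int_Y \lvert f\rvert^2\,d\mu_{m,k}$ into $\lVert f\rVert_{L^6(Y)}^2\cdot(\int_Y \mu_{m,k}^{3/2})^{2/3}$. The unweighted refined Strichartz bounds the first factor by $(M/\lvert\mathbb{W}'\rvert)^{1/3}(\sum\lVert f_S\rVert_2^2)^{1/2}$, while the good-tube condition enters through $\mu_{m,k}(4S)\lesssim 2^{-k\alpha^*/2-\alpha(j-k)}$ to bound the second factor, producing a compensating $(\lvert\mathbb{W}'\rvert/M)^{1/3}$. The factors cancel, and since $\sum\lVert f_S\rVert_2^2\lesssim X$ one obtains $X\lesssim C\,\lVert\mu_m\rVert^{1/2}X^{1/2}$, hence $X\lesssim C^2\lVert\mu_m\rVert$. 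This self-referential division is what forces the first power of $\lVert\mu_m\rVert$; your proposal has no analogue of it.
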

The proof of this claim will use the refined Strichartz inequality for the cone (Theorem~\ref{refinedstrichartz}), the precise statement and proof of which is postponed until Subsection~\ref{strichartz}. Summing the bound in Claim~\ref{L2bound3} over $m$, $k$ and $j$ will then give a bound on \eqref{neighbourhood}. A very similar strategy will then be used to bound \eqref{surface}, which will conclude the proof of Claim~\ref{L2bound2}.

% In order to prove Claim~\ref{L2bound3}, by partitioning the wave packet decomposition of $\mu_{g, \rho^*}$ into $\lesssim 1$ measures and applying the triangle inequality, it may be assumed that any two caps in the wave packet decomposition of $\mu_{g, \rho^*}$ are non-adjacent. Similarly it may be assumed that any two tubes in the wave packet decomposition of $\mu_{g, \rho^*}$ corresponding to the same cap $\tau$ are non-adjacent. 
For fixed $m$, 
\begin{multline} \label{pause10} \int \phi(\xi) \left\lvert \mathcal{F} \left(\vartheta_m \mu_{g,j,k,\rho^*}\right)\left(\xi\right) \right\rvert^2 \, d\xi \\
\lesssim \int \left\lvert  \sum_{\substack{j' \\
2^{j'} \sim 2^j}} \sum_{\substack{k' \\
2^{k'} \sim 2^k} } \sum_{\tau \in \Lambda_{j',k'}} \sum_{\substack{T \in \mathbb{T}_{j',k', \tau, g} \\ \rho T \cap B_m \neq \emptyset }} \mathcal{F} \left( \vartheta_m M_{T, \rho^*} \mu \right)(\xi) \right\rvert^2 \, d\xi. \end{multline}
The following inequality will be shown to follow from ``essential orthogonality'' of wave packets:
\begin{equation} \label{plancherelstar} \eqref{pause10} \lesssim_N 2^{-kN} +  \sum_{\substack{j' \\
2^{j'} \sim 2^j}} \sum_{\substack{k' \\
2^{k'} \sim 2^k} } \sum_{\tau \in \Lambda_{j',k'}} \sum_{\substack{T \in \mathbb{T}_{j',k', \tau, g} \\ \rho T \cap B_m \neq \emptyset }}\int \left\lvert(M_{T, \rho^*} \mu)(x) \right\rvert^2 \, dx. \end{equation}
The argument for this is similar to the usual derivation of the wave packet decomposition (see e.g.~\cite[Section~3]{guthpoly}), but will be summarised here for completeness. By Plancherel the term $\vartheta_m$ in \eqref{pause10} can be removed first since it is $\lesssim 1$ everywhere. By the triangle inequality the sums in $j'$ and then $k'$ can then be moved outside the integral, since there are $\lesssim 1$ terms in each sum. The sum in $\tau$ can be expanded out using the inner product on $L^2(\mathbb{R}^3)$; for a given $\tau \in \Lambda_{j',k'}$, there are $\lesssim 1$ sets $\tau' \in \Lambda_{j',k'}$ such that $2 \tau' \cap \tau \neq \emptyset$, whilst the contribution of the other terms is $\lesssim_N 2^{-kN}$ by \eqref{taudist}. The AM-GM inequality then allows the sum over $\tau$ to be moved outside the integral. By Plancherel the sum over $T$ can then be moved outside the integral since the sets $T$ are finitely overlapping, so this proves \eqref{plancherelstar}. 

The decay term can be ignored, so it remains to bound the sum in \eqref{plancherelstar}. The integral in \eqref{plancherelstar} is roughly the $L^2$ average of $\widehat{\mu_g}$ over the cone (ignoring rescaling). The usual method of controlling the $L^2$ averages of $\widehat{\mu}$ over surfaces uses duality and Cauchy-Schwarz to reduce the problem to bounding $\lVert Ef\rVert_{L^2(H)}$, where $Ef$ is a Fourier extension operator and $H$ is a weight function corresponding to $\mu$ (as in e.g.~\cite{duzhang}). Since $\mu_g$ is not a positive measure this duality step will be slightly more involved, but it still works by extracting the measure $\mu$ out of $\mu_g$ as follows. By Plancherel,
\begin{align} \notag &\sum_{\substack{j' \\
2^{j'} \sim 2^j}} \sum_{\substack{k' \\
2^{k'} \sim 2^k} } \sum_{\tau \in \Lambda_{j',k'}} \sum_{\substack{T \in \mathbb{T}_{j',k', \tau, g} \\ \rho T \cap B_m \neq \emptyset }}\int \left\lvert(M_{T, \rho^*} \mu)(x) \right\rvert^2 \, dx \\
\notag &\quad = \sum_{\substack{j' \\
2^{j'} \sim 2^j}} \sum_{\substack{k' \\
2^{k'} \sim 2^k} } \sum_{\tau \in \Lambda_{j',k'}} \sum_{\substack{T \in \mathbb{T}_{j',k', \tau, g} \\ \rho T \cap B_m \neq \emptyset }} \\
\notag &\qquad \frac{1}{\rho^3}\int (M_{T, \rho^*}\mu)(x) \eta_T\left(\frac{x}{\rho}\right) \left[ \int \overline{\widecheck{\psi_{\tau}}\left(\frac{x}{\rho}-y\right)} \, d\mu(y) \right] \, dx  \\
\label{convolution} &\quad =  \int \sum_{\substack{j' \\
2^{j'} \sim 2^j}} \sum_{\substack{k' \\
2^{k'} \sim 2^k} } \sum_{\tau \in \Lambda_{j',k'}} \sum_{\substack{T \in \mathbb{T}_{j',k', \tau, g} \\ \rho T \cap B_m \neq \emptyset }} \left[(M_{T, \rho^*}\mu) \eta_{T, 1/\rho} \right] \ast \widecheck{\psi_{\tau,\rho}} \, d(\rho_{\#}\mu),  \end{align} %\endgroup
where $\rho_{\#}\mu$ is the pushforward of $\mu$ under $y \mapsto \rho y$. To compress the notation, let $\mathbb{W}$ be the entire set of (inflated) tubes $T$ occurring in \eqref{convolution}:
\[ \mathbb{W} = \bigcup_{\substack{j' \\ 2^{j'} \sim 2^j}} \bigcup_{\substack{k' \\ 2^{k'} \sim 2^k} } \bigcup_{\tau \in \Lambda_{j',k'}} \left\{ S= 2 \rho T:  T \in \mathbb{T}_{j',k', \tau, g} :  \rho T \cap B_m \neq \emptyset \right\}, \]
 and for each such $S$ let 
\begin{equation} \label{fdefn} f_S = \left[(M_{T, \rho^*}\mu) \eta_{T, 1/\rho} \right] \ast \widecheck{\psi_{\tau,\rho}}. \end{equation} 
By Cauchy-Schwarz (with respect to the measure $\mu_m$), 
\begin{equation} \label{couchy} \eqref{convolution} = \left\lvert\int \sum_{S \in \mathbb{W}} f_S \, d\rho_{\#}\mu \right\rvert \lesssim_N \left(\int  \left\lvert\sum_{S \in \mathbb{W}} f_S \right\rvert^2 \, d\mu_m \right)^{1/2}\lVert \mu_m\rVert^{1/2} +2^{-kN}, \end{equation}
where $\mu_m$ is the restriction of $\rho_{\#}\mu$ to $2^{10k \delta}B_m$. After some minor adjustments and mollifications, this will be in a form more amenable to the refined Strichartz inequality; the application of which will be the final major step in the proof. Each $f_S$ is essentially supported in a rescaled tube $S$ of dimensions
\[\sim 2^{k(-1/2+\delta)} \times 2^{k(-1/2+\delta)} \times 2^{k\delta}. \]
Each $f_S$ has Fourier transform $\widehat{f_S}$ supported in a cap of dimensions
\[ \sim 2^{k/2} \times 2^k \times 1, \]
in a $\sim 1$ neighbourhood of the cone $2^k \Gamma$. Each rescaled tube $S$ has direction normal to the corresponding cap $\tau$. Let $f=\sum_{S \in \mathbb{W}} f_S$, so that $\widehat{f}$ is supported in a ball around the origin of radius $C''2^k$ for some sufficiently large constant $C''$. Let $\varphi$ be a smooth non-negative even bump function equal to 1 on $B(0,C'')$ and supported in $B(0,2C'')$, and let $\varphi_k(\xi) = \varphi( \xi/ 2^k )$. Then $\widehat{f}=\widehat{f} \varphi_k$, and so by Cauchy-Schwarz,
\[ \left\lvert f \right\rvert^2 = \left\lvert f \ast \widecheck{\varphi_k} \right\rvert^2 \lesssim \left\lvert f\right\rvert^2 \ast \left\lvert\widecheck{\varphi_k} \right\rvert. \]
Putting this into the integral in \eqref{couchy} gives
\begin{multline*} \int\left\lvert\sum_{S \in \mathbb{W}} f_S \right\rvert^2 \, d\mu_m =  \int \left\lvert f\right\rvert^2 \, d\mu_m \lesssim \int \left\lvert f\right\rvert^2 \ast \left\lvert\widecheck{\varphi_k} \right\rvert\, d\mu_m \\
= \int \left\lvert f\right\rvert^2 \, d\left(\mu_m \ast \left\lvert\widecheck{\varphi_k}\right\rvert \right)  \lesssim_N \int \left\lvert f\right\rvert^2 \, d\mu_{m,k},  \end{multline*} 
where 
\[ \mu_{m,k} := \mu_m \ast \zeta_{k,N}, \qquad \zeta_{k,N}(x) := \frac{ 2^{3k} }{1+ 2^{kN} \left\lvert x\right\rvert^N }. \]
It remains to bound $\int \left\lvert f\right\rvert^2 \, d\mu_{m,k}$. By dyadic pigeonholing and the triangle inequality, there is a subset $\mathbb{W}' \subseteq \mathbb{W}$ such that $\lVert f_S\rVert_2$ is constant up to a factor of 2 as $S$ ranges over $\mathbb{W}'$, and 
\begin{equation} \label{pigeoncheck}  \int \left\lvert f\right\rvert^2 \, d\mu_{m,k} \lesssim_N \log\left( 2^j\right)^2 \int \left\lvert \sum_{S \in \mathbb{W}'} f_S \right\rvert^2 \, d\mu_{m,k}+ 2^{-kN}. \end{equation}
Strictly speaking, in order for this pigeonholing argument to work, some trivial lower and upper bounds are needed on $\lVert f_S\rVert_2$ to restrict the range of dyadic values. By \eqref{pigeonbound} below each $\lVert f_S\rVert_2$ satisfies 
\[ \lVert f_S\rVert_2 \lesssim \left( \sum_{S = 2\rho T \in \mathbb{W}} \lVert M_{T, \rho^*}\mu \rVert_2^2 \right)^{1/2}. \]
Since the bound on \eqref{convolution} to eventually be proved below is \eqref{pigeonconstraint}, by \eqref{dimensionboundmu} the contribution to \eqref{convolution} of the sum over those $f_S$ with 
\[ \left\lVert f_S \right\rVert_2  \leq  2^{-10000j} \left( \sum_{S = 2\rho T \in \mathbb{W}} \lVert M_{T, \rho^*}\mu \rVert_2^2 \right)^{1/2}, \]
will automatically satisfy \eqref{pigeonconstraint}, so by the triangle inequality it may be assumed that all every $S \in \mathbb{W}$ has $\left\lVert f_S\right\rVert_2$ lying in a range of $\lesssim \log\left( 2^j \right)$ dyadic values, so that the inequality \eqref{pigeoncheck} holds.

Cover the support of $\mu_{m,k}$ with $2^{-k/2}\mathbb{Z}^3$-lattice cubes $Q$, and partition the cubes $Q$ according to the dyadic number of tubes $S \in \mathbb{W}'$ such that $2S$ intersects $Q$. By pigeonholing again, there is a union $Y$ of $2^{-k/2}\mathbb{Z}^3$-lattice cubes $Q$, contained in a ball of radius 1, such that each $Q$ intersects the same dyadic number $M$ of tubes $2S$ with $S \in \mathbb{W}'$ (up to a factor of 2), and such that
\begin{equation} \label{pause15}  \int \left\lvert \sum_{S \in \mathbb{W}'} f_S\right\rvert^2 \, d\mu_{m,k} \lesssim \log\left( 2^k \right) \int_Y \left\lvert \sum_{S \in \mathbb{W}'} f_S\right\rvert^2 \, d\mu_{m,k}. \end{equation}
Recall that $p=6$ and $p'= 3$, so that $\frac{1}{2} = \frac{1}{p} + \frac{1}{p'}$. By Hölder's inequality with respect to Lebesgue measure, the integral in the right-hand side of \eqref{pause15} satisfies
\begin{equation} \label{holder} \int_Y \left\lvert \sum_{S \in \mathbb{W}'} f_S\right\rvert^2 \, d\mu_{m,k} \leq \left\lVert  \sum_{S \in \mathbb{W}'} f_S\right\rVert_{L^p(Y)}^2 \left( \int_Y \mu_{m,k}^{p'/2} \right)^{2/p'}. \end{equation} 
Hence it remains to bound each term of the product in \eqref{holder}.  Recall that each tube $S \in \mathbb{W}'$ is of the form $S=2\rho  T$, where $T$ is a tube of dimensions 
\[ \sim 2^{-j+k\left(1/2+\delta\right)} \times 2^{-j+k\left(1/2+\delta\right)} \times (10 \cdot 2^{-(j-k)+k\delta}), \]
and $\mu\left(10 T\right) \lesssim 2^{k(50\delta-\alpha^*/2)- \alpha(j-k)}$ by the definition of good tubes (see \eqref{badtubestau} and \eqref{goodtubestau}). Since $j \geq 2J$, the measure $\mu_{m,k}$ satisfies
\begin{align*} \mu_{m,k}(4S) &\leq \int_{\mathbb{R}^3} \int_{4S} \zeta_{k,N}(x-y) \, dx \, d(\rho_{\#}\mu)(y) \\
&\lesssim_{N, \delta} \int_{\mathbb{R}^3} \int_{4S \cap B\left(y, 2^{-k(1-\delta)} \right) } \zeta_{k,N}(x-y) \, dx \, d(\rho_{\#}\mu)(y) +2^{-kN} \\
&\leq \int_{5S} \int_{\mathbb{R}^3} \zeta_{k,N}(x-y) \, dx \, d(\rho_{\#}\mu)(y) + 2^{-kN} \\
&\lesssim \int_{5S} \, d(\rho_{\#}\mu)(y) + 2^{-kN} \\
&= \int_{10 T}  \, d\mu(y) + 2^{-kN} \\
&\lesssim 2^{k(50\delta-\alpha^*/2)- \alpha(j-k)}. \end{align*} 
This can be interpreted as saying that ``good'' tubes for $\mu$ are automatically ``good'' tubes for its mollified and localised version $\mu_{m,k}$. Similarly, by the uncertainty principle, for any $x \in \mathbb{R}^3$ and $r >0$,
\begin{equation} \label{dimensionboundmu} \mu_{m,k}(B(x,r)) \lesssim 2^{-\alpha(j-k)} r^{\alpha}, \quad \lVert \mu_{m,k}\rVert_{\infty} \lesssim 2^{3k-j\alpha}, \end{equation}
see e.g.~\cite[Lemma~3.1]{erdogan} for a proof. The second term of the product in \eqref{holder} therefore satisfies
\begin{align} \notag \int_Y \mu_{m,k}^{p'/2} &\lesssim 2^{(3k-j\alpha)(p'/2 -1)} \int_Y \, d\mu_{m,k} \\
\notag &\lesssim \frac{2^{(3k-j\alpha)(p'/2 -1)}}{M}\sum_{S \in \mathbb{W}'}  \int_{4S} \, d\mu_{m,k} \\
\label{secondterm} &\lesssim 2^{k[3(p'/2-1) +50\delta-\alpha^*/2+\alpha]-j\alpha p'/2} \left( \frac{\lvert \mathbb{W}'\rvert}{M} \right). \end{align}
This bounds the second term in \eqref{holder}.

The first term in \eqref{holder} will be bounded using the refined Strichartz inequality; see Theorem~\ref{refinedstrichartz} for the precise statement and subsequent proof. The inequality in Theorem~\ref{refinedstrichartz} is for $R^{1/2}$-cubes in $B(0,R)$ with $R \geq 1$, so applying Theorem~\ref{refinedstrichartz} with $R=2^k$ and $p=6$ to the rescaled functions $g_S(x) = f_S\left(2^{-k}x\right)$ gives 
\begin{equation}  \label{firstterm} \left\lVert \sum_{S \in \mathbb{W}'} f_S \right\rVert_{L^p(Y)} \lesssim_{\epsilon, \delta} 2^{\frac{k}{p'} \left[ 3/2 + \epsilon/2 \right]}  \left( \frac{M}{\lvert \mathbb{W}'\rvert} \right)^{\frac{1}{2} - \frac{1}{p}}\left( \sum_{S \in \mathbb{W}'} \left\lVert f_S \right\rVert_2^2 \right)^{1/2}. \end{equation}

 Applying~\eqref{secondterm} and \eqref{firstterm} to (the square root of) each term in \eqref{holder} gives 
\begin{multline} \label{pause80} \left\lVert \sum_{S \in \mathbb{W}'} f_S \right\rVert_{L^p(Y)} \left( \int \mu_{m,k}^{p'/2} \right)^{1/p'} \lVert \mu_m\rVert^{1/2} \\
\lesssim_{\epsilon}  2^{\frac{k}{p'}[3(p'/2-1) -\alpha^*/2+\alpha + 3/2+\epsilon]-j\alpha/2}\lVert \mu_m\rVert^{1/2}\left( \sum_{S \in \mathbb{W}'} \left\lVert f_S \right\rVert_2^2 \right)^{1/2},\end{multline}
since $\delta \ll \epsilon$ depends only on $\epsilon$. Applying Plancherel twice to the functions $f_S$ (see \eqref{fdefn}) yields
\begin{equation} \label{pigeonbound} \left\lVert f_S\right\rVert_2 \lesssim \left\lVert M_{T, \rho^*} \mu \right\rVert_2. \end{equation}
Putting this chain of inequalities together will conclude the proof of Claim~\ref{L2bound3}. To summarise, %\begingroup
%\allowdisplaybreaks
\begin{align} \notag &\int \phi(\xi) \left\lvert \mathcal{F} \left(\vartheta_m \mu_{g,j,k,\rho^*}\right)\left(\xi\right) \right\rvert^2 \, d\xi \\
\notag &\quad \lesssim \int \left\lvert  \sum_{2\rho T \in \mathbb{W}} \mathcal{F} \left( \vartheta_m M_{T, \rho^*} \mu \right) (\xi) \right\rvert^2 \, d\xi  &&\text{(by \eqref{pause10})} \\
\label{L2sum} &\quad \lesssim  \sum_{2\rho T \in \mathbb{W}}\int \left\lvert   M_{T, \rho^*} \mu(x) \right\rvert^2 \, dx + 2^{-kN} &&\text{(by \eqref{plancherelstar})} \\
\notag &\quad =  \int \sum_{2\rho T\in \mathbb{W}} \left[(M_{T, \rho^*}\mu) \eta_{T, 1/\rho} \right] \ast \widecheck{\psi_{\tau,\rho}} \, d(\rho_{\#}\mu) + 2^{-kN} &&\text{(by \eqref{convolution})} \\
\notag &\quad \leq \left(\int \left\lvert\sum_{S \in \mathbb{W}} f_S \right\rvert^2 \, d \mu_m \right)^{1/2} \lVert \mu_m\rVert^{1/2} + 2^{-kN} &&\text{(by \eqref{couchy})} \\
\notag &\quad \lesssim_N \log\left(2^j\right)^{3/2} \left(\int_Y \left\lvert\sum_{S \in \mathbb{W}'} f_S \right\rvert^2 \, d\mu_{m,k} \right)^{1/2} \lVert \mu_m\rVert^{1/2} + 2^{-kN} &&\text{(by \eqref{pause15})} \\
\notag &\quad \leq \log\left(2^j\right)^{3/2}  \left\lVert \sum_{S \in \mathbb{W}'} f_S \right\rVert_{L^p(Y)} \left( \int_Y \mu_{m,k}^{p'/2} \right)^{1/p'} \lVert \mu_m\rVert^{1/2} +2^{-kN} &&\text{(by \eqref{holder})} \\
\notag &\quad \lesssim_{\epsilon,N}  2^{\frac{k}{p'}[3(p'/2-1) -\alpha^*/2+\alpha + 3/2+2\epsilon]-j\alpha/2}   \times \\
\label{pigeonconstraint} &\qquad \left\lVert \mu_m \right\rVert^{1/2}\left( \sum_{2\rho T \in \mathbb{W}} \lVert M_{T, \rho^*}\mu \rVert_2^2 \right)^{1/2}  + 2^{-kN} &&\text{(by \eqref{pause80})}. \end{align}% \endgroup
In the last line the $\log\left(2^j\right)$ factors were absorbed into the $2^{k \epsilon}$ term; see \eqref{trivialinequality} and \eqref{Jchoice}. The sum in the last line is the same as the one in \eqref{L2sum}, so division gives 
\begin{equation*}  \sum_{2\rho T \in \mathbb{W}}\int \left\lvert  M_{T, \rho^*} \mu(x) \right\rvert^2 \, dx \lesssim_{\epsilon, N}  2^{\frac{2k}{p'}[3(p'/2-1) -\alpha^*/2+\alpha + 3/2+ 5\epsilon]-j\alpha} \lVert \mu_m\rVert +2^{-kN}. \end{equation*} 
 %Using the formula $\alpha^* = \frac{\alpha}{3} + 1 - \epsilon$ from \eqref{alphastardefn}, and 
Putting this bound into \eqref{L2sum} gives
\begin{align*} \int \phi(\xi) \left\lvert \mathcal{F} \left(\vartheta_m \mu_{g,j,k,\rho^*}\right)\left(\xi\right) \right\rvert^2 \, d\xi &\lesssim_{\epsilon,N} 2^{\frac{2k}{p'}[3(p'/2-1)-\alpha^*/2+\alpha + 3/2+5\epsilon]-j\alpha} \lVert \mu_m\rVert + 2^{-kN} \\
&\leq 2^{k \left( 3-\frac{3}{p'} + \frac{2\alpha}{p'} - \frac{\alpha^*}{p'} + 5\epsilon \right)-j \alpha} \lVert \mu_m\rVert + 2^{-kN}. \end{align*}
This finishes the proof of Claim~\ref{L2bound3}.

By summing the bound from Claim~\ref{L2bound3} over $m$ and substituting into \eqref{diag3}, using $k \geq \frac{j \epsilon}{10^5}-2$, and using \eqref{offdiag4} for the off-diagonal terms (since $k \leq j(1-\epsilon)$, the choice $N \gg \epsilon^{-4}$ works), 
\begin{equation} \label{pause90} \int_{\mathcal{N}_{C2^{-k}}(\Gamma) \setminus \mathcal{N}_{C^{-1}2^{-k}}(\Gamma_{\mathbb{R}})}  \left\lvert \widehat{\mu_g}\left( 2^j \xi \right) \right\rvert^2 \,   d\xi \lesssim_{\epsilon}  2^{\frac{k}{p'}\left( -3 + 2\alpha - \alpha^*+ 10^7 \epsilon\right)-j \alpha }.  \end{equation}
Putting this into \eqref{neighbourhood} gives
\begin{align} \notag &\sum_{j \geq (1-\kappa)^{-1}} \sum_{k=\left\lfloor 2j(1-\kappa) \right\rfloor-2}^{\left\lfloor j (1-\epsilon) \right\rfloor} \int_{\mathcal{N}_{C2^{-k}}(\Gamma) \setminus \mathcal{N}_{C^{-1} 2^{-k}} (\Gamma_{\mathbb{R}})} 2^{js' + k/2} \left\lvert \widehat{\mu_g}\left( 2^j \xi \right) \right\rvert^2 \,   d\xi \\
\notag &\quad \lesssim_{\epsilon} 2^{CJ} + \sum_{j \geq \max\left\{2J, (1-\kappa)^{-1} \right\}} \sum_{k=\left\lfloor 2j(1-\kappa) \right\rfloor-2}^{\left\lfloor j (1-\epsilon) \right\rfloor} 2^{j(s'-\alpha)} \cdot 2^{k\left[\frac{1}{p'}\left( -3 + 2\alpha-\alpha^*\right) + \frac{1}{2} +10^7\epsilon\right] } \\
\notag &\quad\lesssim  2^{CJ} + \sum_{j \geq \max\left\{2J, (1-\kappa)^{-1} \right\}} 2^{j\left[s'- \alpha+ \frac{1}{p'} \left( -3 + 2\alpha-\alpha^* \right) + \frac{1}{2} +10^7\epsilon\right] } \\
\label{finalbound} &\quad \lesssim_{\epsilon} 2^{CJ}, \end{align}
for a sufficiently large constant $C$, since $p'=3$, $3/2 < \alpha < 5/2$, $\epsilon \ll 1$ and by the definition of $s'$ in \eqref{sdoubleprimedefn}. The trivial bound $\left\lvert \widehat{\mu_g}(\xi) \right\rvert \lesssim R^{O(1)}$ for $\xi  \in B(0,R)$, coming from \eqref{asterisktrivial}, was used to handle the terms with $j \leq 2J$. This bounds the sum in \eqref{neighbourhood}.

It remains to bound the sum in \eqref{surface}, which may be written as
\begin{equation} \label{pause101}  \sum_{j \geq (1-\kappa)^{-1}} 2^{j(s'-1)} \int_{1-2^{-j(1-\epsilon)}}^1 \frac{1}{\sqrt{1-t^2}} \left(\int_{2^{j-1}}^{2^j} \int_0^{2\pi}  \left\lvert \widehat{\mu_g}\left( r  \gamma_t(\theta) \right) \right\rvert^2 \, d\theta \, dr\right) \, dt. \end{equation}
By applying a similar argument to \eqref{placeholder}-\eqref{errorterms}, for each $j \geq 2J$ in \eqref{pause101} the function $\widehat{\mu_g}$ can be replaced by 
\[ \widehat{\mu_{g,j}} := \sum_{\substack{ j' \\ 2^{j'} \sim 2^j } } \sum_{k=\lfloor j(1-2\epsilon) \rfloor}^j \sum_{\tau \in \Lambda_{j',k}} \sum_{T \in \mathbb{T}_{j',k, \tau,g}} \widehat{M_T\mu}. \]
The function $\mu_{g,j}$ is supported in the ball of radius $2^{10j\delta}$ centred at the origin, and therefore satisfies $\mu_{g,j} = \mu_{g,j} \varphi\left(x/ 2^{10j \delta}\right)$ for a smooth non-negative bump function $\varphi$ equal to 1 on $B(0,1)$, which vanishes outside $B(0,2)$. Hence for $r, \theta$ and $t$ in the domain of integration in \eqref{pause101},  the Schwartz decay of $\widehat{\varphi}$ gives
\[ \left\lvert \widehat{\mu_{g,j}}\left( r  \gamma_t(\theta) \right) \right\rvert \lesssim_N  \left(\left\lvert \widehat{\mu_{g,j}}\right\rvert \ast \zeta_N\right)\left( r  \gamma_t(\theta) \right), \]
where $\zeta_N(x) = \frac{2^{30j\delta}}{1+2^{10j\delta N}\lvert x\rvert^N}$. The function $\zeta_N$ has the property that 
\begin{equation} \label{constantproperty} \zeta_N(x+w) \lesssim_N 2^{10j\delta N} A^N \zeta_N(x) \quad w, x \in \mathbb{R}^3, \quad \lvert w \rvert \leq A, \end{equation}
for any constant $A \geq 1$, and therefore the function $\left\lvert \widehat{\mu_{g,j}}\right\rvert \ast \zeta_N$ also has this property. By the definition of $\gamma_t$ in \eqref{gammatdefn},
\begin{align*} \gamma_t(\theta) &= \sqrt{1-t^2} \gamma'(\theta) + t \gamma(\theta) \times \gamma'(\theta) \\
&= \frac{t}{\sqrt{2}}\gamma(\theta+\pi) - \sqrt{1-t^2} \gamma'(\theta+\pi) \\
&= \frac{t}{\sqrt{2}}\gamma\left(\theta+\pi - \frac{\sqrt{2}\sqrt{1-t^2}}{t} \right) + O\left(1-t\right), \end{align*} 
where $t \sim 1$. Since $0 \leq 1-t \leq 2^{-j(1-\epsilon)}$ and $r \sim 2^j$, the constancy property \eqref{constantproperty} of $\left\lvert \widehat{\mu_{g,j}}\right\rvert \ast \zeta_N$ therefore gives
\[ \left\lvert \widehat{\mu_{g,j}}\left( r  \gamma_t(\theta) \right) \right\rvert \lesssim_N 2^{jN(\epsilon +10\delta)}  \left(\left\lvert \widehat{\mu_{g,j}} \right\rvert \ast \zeta_N\right)\left( \frac{r}{\sqrt{2}}  \gamma\left(\theta+\pi - \frac{\sqrt{2}\sqrt{1-t^2}}{t} \right) \right), \] 
on the domain of integration in \eqref{pause101}. Applying Cauchy-Schwarz and using $\delta \ll \epsilon$ gives
\[ \left\lvert \widehat{\mu_{g,j}}\left( r  \gamma_t(\theta) \right) \right\rvert^2 \lesssim_N  2^{4j \epsilon N} \left(\left\lvert \widehat{\mu_{g,j}} \right\rvert^2 \ast \zeta_N\right)\left( \frac{r}{\sqrt{2}}  \gamma\left(\theta+\pi - \frac{\sqrt{2}\sqrt{1-t^2}}{t} \right) \right). \] 
Hence
\begin{align} \notag &\int_{1-2^{-j(1-\epsilon)}}^1 \frac{1}{\sqrt{1-t^2}} \left(\int_{2^{j-1}}^{2^j} \int_0^{2\pi}  \left\lvert \widehat{\mu_{g,j}}\left( r  \gamma_t(\theta) \right) \right\rvert^2 \, d\theta \, dr\right) \, dt \\
\label{uncertainty10} &\quad \lesssim_N 2^{j(-1/2+5\epsilon N)} \int_{2^{j-1}}^{2^j} \int_0^{2\pi}  \left(\left\lvert \widehat{\mu_{g,j}} \right\rvert^2 \ast \zeta_N \right)\left( \frac{r}{\sqrt{2}}  \gamma(\theta) \right) \, d\theta \, dr. \end{align}
By the essentially constant property of $\zeta_N$, this double integral satisfies 
\begin{align} \notag  &\int_{2^{j-1}}^{2^j} \int_0^{2\pi}  \left(\left\lvert \widehat{\mu_{g,j}} \right\rvert^2 \ast \zeta_N \right)\left( \frac{r}{\sqrt{2}}  \gamma(\theta) \right) \, d\theta \, dr \\
\notag &\quad \lesssim_N  2^{j(10\delta N - 1)}\int_{\mathcal{N}_1(2^j \Gamma) }  \left(\left\lvert \widehat{\mu_{g,j}} \right\rvert^2 \ast \zeta_N \right)\left( \xi \right) \, d\xi \\
\notag &\quad \lesssim_N 2^{j(10\delta N - 1)}\int_{B(0, 2^{j \epsilon } ) } \left[ \int_{\mathcal{N}_1(2^j\Gamma) } \left\lvert \widehat{\mu_{g,j}}\left(\xi - y\right) \right\rvert^2 \, d\xi \right] \, dy  + 2^{-j\epsilon N} \\
\label{uncertainty20} &\quad \leq 2^{j(10\delta N - 1)} \int_{\mathcal{N}_{2 \cdot 2^{j \epsilon}}(2^j\Gamma) } \left\lvert \widehat{\mu_{g,j}}\left(\xi\right) \right\rvert^2 \, d\xi  + 2^{-j\epsilon N}.  \end{align} 
This integral is essentially a special case of the integral in \eqref{placeholder} with $k \approx j$; if $j \geq 2J$ then similar working to the proof of Claim~\ref{L2bound3}, used to obtain \eqref{pause90}, gives 
\begin{equation} \label{theendisnear} \int_{\mathcal{N}_{2 \cdot 2^{j \epsilon}}(2^j\Gamma) } \left\lvert \widehat{\mu_{g,j}}\left(\xi\right) \right\rvert^2 \, d\xi \lesssim_{\epsilon}  2^{j\left[3-\alpha + \frac{1}{p'}\left( -3+2\alpha-\alpha^*\right)+10^7\epsilon\right] }. \end{equation} Putting all of this together, the sum in \eqref{surface} satisfies 
\begin{align} \notag  &\sum_{j \geq (1-\kappa)^{-1}} 2^{j(s'-1) } \int_{1-2^{-j(1-\epsilon)}}^1 \frac{1}{\sqrt{1-t^2}} \times \\
\notag &\qquad \qquad \left(\int_{2^{j-1}}^{2^j} \int_0^{2\pi}  \left\lvert \widehat{\mu_g}\left( r  \gamma_t(\theta) \right) \right\rvert^2 \, d\theta \, dr\right) \, dt \\
\notag &\quad\lesssim_N 2^{CJ} + \sum_{j \geq \max\left\{2J, (1-\kappa)^{-1} \right\}}  \\
\notag &\qquad 2^{j(s'-3/2+5N \epsilon) } \int_{2^{j-1}}^{2^j} \int_0^{2\pi}  \left(\left\lvert \widehat{\mu_{g,j}} \right\rvert^2 \ast \zeta_N \right)\left( r  \gamma(\theta) \right) \, d\theta \, dr &&\text{(by \eqref{uncertainty10})} \\
\notag &\quad\lesssim_N 2^{CJ} + \sum_{j \geq \max\left\{2J, (1-\kappa)^{-1} \right\}} \\
\notag &\qquad 2^{j(s'-5/2+6N\epsilon) } \int_{\mathcal{N}_{2 \cdot 2^{j \epsilon}}(\Gamma_{2^j}) } \left\lvert \widehat{\mu_{g,j}}\left(\xi\right) \right\rvert^2 \, d\xi + 2^{-j\epsilon N}   &&\text{(by \eqref{uncertainty20})} \\
\notag &\quad\lesssim_{\epsilon} 2^{CJ} +\sum_{j \geq \max\left\{2J, (1-\kappa)^{-1} \right\}} 2^{j\left[s'+ \frac{1}{2}-\alpha + \frac{1}{p'}\left( -3+2\alpha-\alpha^*\right)+7N\epsilon\right] }   &&\text{(by \eqref{theendisnear})} \\
\label{finalbound2} &\quad\lesssim 2^{CJ},  \end{align} 
for some large constant $C$, since $p'=3$, by the definition of $s'$ in \eqref{sdoubleprimedefn}, and by taking $N \sim \epsilon^{-1/4}$. Putting \eqref{finalbound} and \eqref{finalbound2} into \eqref{neighbourhood} and \eqref{surface} shows that the integral in \eqref{singular} satisfies
\[ \int_{0 \leq \eta_1  \leq \eta_2^{\kappa}} \int_0^{2\pi} \lvert \eta\rvert^{s'-2}\left\lvert \widehat{\mu_g}\left( \eta_1\gamma'(\theta) + \eta_2 \gamma(\theta) \times \gamma'(\theta) \right) \right\rvert^2 \, d\theta \,  d\eta \lesssim_{\epsilon} 2^{CJ}, \]
which finishes the proof of Claim~\ref{L2bound2}.

Putting the bounds from Claim~\ref{L2bound1} and Claim~\ref{L2bound2} into \eqref{L2quant}, and then into \eqref{HoneE}, gives 
\[ \mathcal{H}^1(E) \lesssim_{\epsilon} j_0^4 2^{-j_0\epsilon} 2^{CJ}. \]
At this point, choose 
\begin{equation} \label{Jchoice} J = \left\lfloor (j_0\epsilon)/(2C) \right\rfloor, \end{equation}
so that 
\[ \mathcal{H}^1(E) \lesssim_{\epsilon} \epsilon', \]
provided $\delta_1$ is sufficiently small (depending on $\epsilon'$), since $j_0 > \left\lvert \log_2 \delta_1 \right\rvert$. This bounds $\mathcal{H}^1(E)$ in the case where the second term of \eqref{bad} dominates. Since $\mathcal{H}^1(E) \lesssim \epsilon'$ in either case and $\epsilon'$ is arbitrary (with the implicit constant independent of $\epsilon'$), this implies that $\mathcal{H}^1(E)=0$. By inner regularity of the Lebesgue measure, this shows that  
\[ \dim \pi_{\theta} (A) \geq  \dim \pi_{\theta} (\supp \nu) \geq \max\left\{\frac{4\alpha}{9} + \frac{5}{6}, \frac{2\alpha+1}{3} \right\} - 200\sqrt{\epsilon}, \]
for a.e.~$\theta \in [0,2\pi)$. Since a countable union of measure zero sets has measure zero, sending $\epsilon \to 0$ along a countable sequence finishes the proof.  \end{proof}

\subsection{Nonstationary phase} \label{nonstationary}
The following lemma states that if the angle of a tube $T$ is not equal to the angle of projection $\theta$, then $\pi_{\theta\#} (M_Tf)$ is negligible. Recall the notation $(\angle \tau)^* = (\pi + \angle\tau) \bmod 2\pi$ if $\tau$ lies in the forward light cone, and $(\angle \tau)^* = \angle\tau$ if $\tau$ lies in the backward light cone.
\begin{lemma} \label{nonstationaryphase} Fix $\theta \in [0,2\pi)$ and a cap $\tau\in \Lambda_{j,k}$. If
\begin{equation} \label{angleassumption} \left\lvert (\angle \tau)^* - \theta \right\rvert \geq 10^3 \cdot 2^{k(-1/2 + \delta)}, \end{equation}
then for any positive smooth function $f$ supported in the unit ball of $\mathbb{R}^3$ and any $T \in \mathbb{T}_{j,k, \tau}$,
\[ \lVert \pi_{\theta\#} (M_Tf) \rVert_{L^1(\mathbb{R}^3, \mathcal{H}^2)} \lesssim_N 2^{-kN}\lVert f\rVert_1, \]
for arbitrarily large $N$.
\end{lemma}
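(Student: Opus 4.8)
The plan is to exploit non-stationary phase in the direction $\gamma(\theta)$ along which $\pi_\theta$ integrates. Writing $M_Tf=\eta_T\cdot\big(\widecheck{\psi_\tau}\ast f\big)$ and using the formula $\pi_{\theta\#}g(z)=\int_{\mathbb R}g\big(z+s\gamma(\theta)\big)\,ds$ for the density of the pushforward with respect to $\mathcal{H}^2$ on the plane $\gamma(\theta)^\perp$ (see \eqref{projformula}), Tonelli's theorem gives $\pi_{\theta\#}(M_Tf)(z)=\int f(y)K(z,y)\,dy$ with
\[ K(z,y)=\int_{\mathbb R}\eta_T\big(z+s\gamma(\theta)\big)\,\widecheck{\psi_\tau}\big(z+s\gamma(\theta)-y\big)\,ds, \]
so that $\lVert\pi_{\theta\#}(M_Tf)\rVert_{L^1(\mathbb{R}^3,\mathcal{H}^2)}\le\lVert f\rVert_1\,\sup_{y}\int_{\gamma(\theta)^\perp}|K(z,y)|\,d\mathcal{H}^2(z)$. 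Since $\eta_T$ is supported in a fixed dilate $C_0T$ of $T$, $K(z,y)$ vanishes unless the line $z+\mathbb R\gamma(\theta)$ meets $C_0T$, i.e.\ unless $z\in\pi_\theta(C_0T)$, a planar set of $\mathcal H^2$-measure $\lesssim wL$ (the shadow of a $w\times w\times L$ box), where $w\sim2^{-j+k/2+k\delta}$ and $L\sim2^{-j+k+k\delta}$ are the width and length of $T$. So it is enough to prove a pointwise bound $|K(z,y)|\lesssim_N2^{-2k\delta N}\,\ell\,|\tau|$, uniform in $z,y$, for the scale $\ell$ introduced below.

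For the pointwise bound I would expand $\widecheck{\psi_\tau}$ as a Fourier integral and interchange orders to get
\[ K(z,y)=\int\psi_\tau(\xi)\,e^{2\pi i(z-y)\cdot\xi}\Big(\int_{\mathbb R}\eta_T\big(z+s\gamma(\theta)\big)\,e^{2\pi is\,\gamma(\theta)\cdot\xi}\,ds\Big)\,d\xi, \]
and then integrate by parts $N$ times in $s$ (legitimate since the $s$-integrand has compact support, so there are no boundary terms). The geometric input, obtained from \eqref{angleassumption} by elementary trigonometry on $S^2$ (using $\gamma(a)\cdot\gamma(b)=\cos^2\tfrac{a-b}{2}$, that the axis of $T$ points in the direction $\gamma((\angle\tau)^*)$ normal to the cone at the cap, and that $\supp\psi_\tau\subseteq1.1\tau$ lies within angle $O(2^{-k/2})$ and within $O(2^{j-k})$ of the generator at angle $\angle\tau$), is: with $\sigma:=\big|\sin\tfrac{(\angle\tau)^*-\theta}{2}\big|$ one has $\sigma\gtrsim2^{k(-1/2+\delta)}$ with a large implied constant, $|\gamma(\theta)\cdot\xi|\gtrsim2^{j}\sigma^2$ for every $\xi\in1.1\tau$, and the angle $\beta$ between $\gamma(\theta)$ and the axis of $T$ satisfies $\sin\beta\sim\sigma$ (the forward- and backward-cone cases being symmetric under $\xi\mapsto-\xi$). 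Setting $\ell:=\min\{L,\,w/\sin\beta\}$, a splitting of $\gamma(\theta)$ into its components along and transverse to the axis of $T$ shows that $s\mapsto\eta_T(z+s\gamma(\theta))$ is a smooth bump supported on an interval of length $\lesssim\ell$ whose $n$-th derivative is $\lesssim_n\ell^{-n}$, with absolute implied constants; hence integration by parts gives
\[ \Big|\int_{\mathbb R}\eta_T\big(z+s\gamma(\theta)\big)\,e^{2\pi is\,\gamma(\theta)\cdot\xi}\,ds\Big|\lesssim_N\big(|\gamma(\theta)\cdot\xi|\,\ell\big)^{-N}\,\ell\qquad(\xi\in1.1\tau). \]
The crucial estimate is $|\gamma(\theta)\cdot\xi|\,\ell\gtrsim2^{j}\sigma^2\cdot\min\{L,w/\sigma\}\gtrsim2^{2k\delta}$ with a large implied constant, which follows by checking the two cases of the minimum using $L\sim2^{-j+k+k\delta}$, $w\sim2^{-j+k/2+k\delta}$ and $\sigma\gtrsim2^{k(-1/2+\delta)}$. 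Since $|\psi_\tau|\lesssim1$, integrating in $\xi$ over $1.1\tau$ then yields $|K(z,y)|\lesssim_N2^{-2k\delta N}\,\ell\,|\tau|$.

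Finally I would combine the two inputs: $\int_{\gamma(\theta)^\perp}|K(z,y)|\,d\mathcal H^2(z)\lesssim_N wL\cdot2^{-2k\delta N}\,\ell\,|\tau|$, and since $|\tau|\sim2^{3j-3k/2}$ and $\ell\lesssim w/\sigma\lesssim2^{-j+k}$ the product $wL\cdot\ell\cdot|\tau|$ is $\lesssim2^{k(1+2\delta)}$. Thus $\int_{\gamma(\theta)^\perp}|K(z,y)|\,d\mathcal H^2(z)\lesssim_N2^{k(1+2\delta-2\delta N)}$ uniformly in $y$, and choosing $N$ large (depending on the target decay and on $\delta$) proves the lemma. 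The main obstacle is the geometric inequality $|\gamma(\theta)\cdot\xi|\,\ell\gtrsim2^{2k\delta}$ holding uniformly over $\xi\in1.1\tau$: it requires simultaneously tracking how the angular separation forced by \eqref{angleassumption} controls both the height of the cap above $\gamma(\theta)^\perp$ and the effective longitudinal scale $\ell$ of the wave packet, and verifying that these lower bounds survive the thickness $2^{j-k}$ and angular width $2^{-k/2}$ of the cap; once that is in place, all that remains is the bookkeeping that the $2^{O(k)}$ losses from the volume factors $wL$, $|\tau|$, $\ell$ are dominated by the gain $2^{-2\delta kN}$ for $N$ large.
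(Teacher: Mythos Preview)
Your proof is correct and follows the same nonstationary-phase scheme as the paper: write the pushforward as an integral kernel, expand $\widecheck{\psi_\tau}$ as a Fourier integral, and integrate by parts in the line variable using the phase lower bound $|\gamma(\theta)\cdot\xi|\gtrsim 2^j\sigma^2$ together with a directional-derivative bound for $\eta_T$. The two key geometric inputs you isolate are exactly those the paper proves in coordinates (its \eqref{bound1} and \eqref{bound2}--\eqref{bound3}).

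The one genuine difference is in the bookkeeping of the final step. The paper bounds the $s$-integral by $\lesssim_m 2^{-2k\delta m}$, integrates crudely over $\xi\in 1.1\tau$ to get an $L^\infty$ bound on the pushforward, and then passes to $L^1$ using that $M_Tf$ is supported in a ball of radius $\sim 1$. Read literally, this produces a $|\tau|\sim 2^{3j-3k/2}$ factor that is only absorbed once one invokes the constraint $k\ge\lceil j\epsilon\rceil$ from the application (so that $j\le k/\epsilon$ and the loss is a power of $2^k$). Your argument instead integrates the kernel over the shadow $\pi_\theta(C_0T)$ of area $\lesssim wL$ and retains the factor $\ell$ from the support length of the $s$-integral; since $wL\cdot\ell\cdot|\tau|\lesssim 2^{k(1+O(\delta))}$, the $j$-dependence cancels and you obtain the stated bound uniformly in $j,k$. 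In this sense your version is slightly cleaner and proves the lemma exactly as stated, while the paper's version is tailored to (and sufficient for) its actual use.
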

\begin{proof} Assume that $\tau$ lies in the forward light cone; the proof for the backward light cone is similar. For any smooth function $g$ supported in the unit ball and any $x \in \gamma(\theta)^{\perp}$,
\begin{equation} \label{projformula} (\pi_{\theta\#} g)(x) = \int_{\mathbb{R}} g\left( x + t \gamma(\theta) \right) \, dt, \end{equation}
where the density $(\pi_{\theta\#} g)(x)$ is characterised by 
\[ \int_F (\pi_{\theta\#} g)(x) \, d\mathcal{H}^2(x) = (\pi_{\theta\#} g)(F), \] 
for all Borel sets $F \subseteq \gamma(\theta)^{\perp}$. Applying \eqref{projformula} and then Fubini to the function $g=M_Tf$ shows that for any $x \in \gamma(\theta)^{\perp}$,
\begin{equation} \label{parts3} (\pi_{\theta\#} M_T f)(x) = \int_{\mathbb{R}^3} f(y) \left[ \int_{\mathbb{R}} \eta_T(x+t\gamma(\theta)) \widecheck{\psi_{\tau}}(x+t\gamma(\theta) -y ) \, dt \right]\, dy. \end{equation}
The innermost integral is 
\begin{multline} \label{parts2} \int_{\mathbb{R}} \eta_T(x+t\gamma(\theta)) \widecheck{\psi_{\tau}}(x+t\gamma(\theta) -y ) \, dt  \\
= \int_{\mathbb{R}^3} \psi_{\tau}(\xi) e^{2\pi i \langle \xi, x-y \rangle} \left[ \int_{\mathbb{R}} \eta_T(x+t\gamma(\theta) ) e^{2\pi i t \langle \xi, \gamma(\theta) \rangle } \, dt\right] \, d\xi. \end{multline} 
By integrating by parts $m$ times, the innermost integral of this is 
\begin{multline} \label{parts} \int_{\mathbb{R}} \eta_T(x+t\gamma(\theta) ) e^{2\pi i t\langle \xi, \gamma(\theta) \rangle } \, dt \\
= \left( \frac{-1}{2\pi i \langle \xi, \gamma(\theta) \rangle } \right)^m \int_{\mathbb{R}} e^{2\pi i t \langle \xi, \gamma(\theta) \rangle } \left( \frac{d}{dt} \right)^m \eta_T(x+t\gamma(\theta)) \, dt. \end{multline} 
Therefore it will suffice to show that the right-hand side of $\eqref{parts}$ is $\lesssim_m 2^{-2k\delta m}$ in size for any $m$, whenever the variable $\xi$ occurring in \eqref{parts} lies in $1.1\tau$. Assume without loss of generality that $\angle \tau=0$. By translating $x$ and changing variables in $t$ it may be further assumed that $T$ is centred at the origin. Since $\angle \tau = 0$ this means that
\[ T = UA \left([0,1]^3 \right), \]
where $A: \mathbb{R}^3 \to \mathbb{R}^3$ is the linear map
\[ (x_1,x_2,x_3) \mapsto \left(2^{-j+k(1/2+\delta)}x_1, 2^{-j+k(1/2+\delta)} x_2 , 2^{-(j-k)+k \delta} x_3\right), \]
and $U$ is the unitary defined through the standard basis by
\[ e_1 \to \frac{1}{\sqrt{2}} (1,0,1), \quad e_2 \to e_2, \quad e_3 \to \frac{1}{\sqrt{2}} (-1,0,1). \]
By \eqref{angleassumption},
\begin{equation} \label{anglerestriction} \varepsilon := \lvert \theta-\pi\rvert \geq 10^3 2^{k(-1/2+ \delta)}. \end{equation}
Write 
\[ \xi = \frac{\xi_1}{\sqrt{2}} (1,0,1) + \xi_2 (0,1,0) + \frac{\xi_3}{\sqrt{2}} (-1,0,1), \quad \xi_1, \xi_2, \xi_3 \in \mathbb{R}. \]
Since $\xi \in 1.1\tau$, the coefficients $\xi_1$, $\xi_2$, $\xi_3$ satisfy
\begin{equation} \label{coordinates} \frac{2^j}{10} \leq \xi_1 \leq 10 \cdot 2^j, \quad \lvert \xi_2\rvert \leq 10\cdot 2^{j-k/2}, \quad \frac{2^{j-k}}{10} \leq \lvert \xi_3\rvert \leq 10 \cdot 2^{j-k}. \end{equation}
The inner product of $\xi$ and $\gamma(\theta)$ is 
\begin{equation} \label{innerproduct} \langle \xi, \gamma(\theta) \rangle =  \frac{ \xi_1}{2} \left( 1+\cos \theta\right) + \frac{\xi_2 \sin \theta}{\sqrt{2}} + \frac{\xi_3}{2} \left( 1-\cos \theta \right). \end{equation}
Hence by \eqref{anglerestriction}, \eqref{coordinates}, \eqref{innerproduct} and the triangle inequality, 
\begin{equation} \label{bound1} \left\lvert \left\langle \xi, \gamma(\theta) \right\rangle \right\rvert \gtrsim 2^j \varepsilon^2 \quad (\gtrsim 2^{j-k(1-2\delta)}). \end{equation}

It remains to bound the integrand of \eqref{parts}. The function $\eta_T$ can be written as 
\begin{multline*} \eta_T(x) = \eta(A^{-1} U^*x) \\
= \eta\left( 2^{j - k\left( 1/2 + \delta \right) } \left( \frac{x_1+x_3}{\sqrt{2}} \right), 2^{j - k\left( 1/2 + \delta \right) } x_2, 2^{j-k-k\delta}\left( \frac{x_3-x_1}{\sqrt{2}} \right) \right), \end{multline*}
where $\eta$ is a smooth function vanishing outside $B(0,2)$ and satisfying $\eta \sim 1$ on $[0,1]^3$. Hence 
\begin{equation} \label{directional} \eta_T(x+ t\gamma(\theta)) = \eta(A^{-1} U^* x + tA^{-1} U^* \gamma(\theta)), \end{equation}
and it will suffice to bound $\left\lvert A^{-1} U^* \gamma(\theta)\right\rvert$ from above. By the definition of $U$,
\[ U^* \gamma(\theta)  = \left( \frac{1}{\sqrt{2}}  \langle (1,0,1) ,\gamma(\theta) \rangle, \langle(0,1,0), \gamma(\theta) \rangle, \frac{1}{\sqrt{2}} \langle \gamma(\theta), (-1,0,1) \rangle \right), \]
and so 
\begin{multline} \label{spoon} A^{-1} U^* \gamma(\theta) = \Bigg( \frac{2^{j - k\left( 1/2 + \delta \right) }}{\sqrt{2}}  \langle (1,0,1) , \gamma(\theta) \rangle, \\
2^{j - k\left( 1/2 + \delta \right) } \langle(0,1,0), \gamma(\theta) \rangle, \frac{2^{j-k-k\delta}}{\sqrt{2}} \langle \gamma(\theta), (-1,0,1) \rangle \Bigg). \end{multline}
It will be shown that the definition of $\varepsilon$ and the assumption in \eqref{anglerestriction} imply that
\begin{equation} \label{bound2} \lvert A^{-1} U^*\gamma(\theta) \rvert \lesssim \varepsilon 2^{j-k(1/2+\delta)}. \end{equation}
To see this, expanding out the Euclidean norm of \eqref{spoon} gives
\begin{equation} \label{threeterms} \lvert A^{-1} U^*\gamma(\theta) \rvert  \sim \left\lvert 1+ \cos \theta \right\rvert 2^{j-k(1/2+ \delta) } + \left\lvert \sin \theta \right\rvert 2^{j-k(1/2+\delta) } + 2^{j-k-k\delta} \left\lvert 1- \cos \theta \right\rvert. \end{equation}
The last term satisfies 
\[ 2^{j-k-k\delta} \left\lvert 1- \cos \theta \right\rvert \lesssim 2^{j-k-k\delta} \lesssim \varepsilon 2^{j-k(1/2+\delta)}, \]
where the last inequality follows from the assumed lower bound \eqref{anglerestriction} on $\varepsilon$. The second term in \eqref{threeterms} satisfies 
\[ \left\lvert \sin \theta \right\rvert 2^{j-k(1/2+\delta) } \lesssim \left\lvert \theta - \pi \right\vert  2^{j-k(1/2+\delta) }  = \varepsilon  2^{j-k(1/2+\delta) }, \]
again by the definition of $\varepsilon$ in \eqref{anglerestriction}. Similarly, the first term in \eqref{threeterms} satisfies 
\begin{align*} \left\lvert 1+ \cos \theta \right\rvert 2^{j-k(1/2+ \delta) } &= \left\lvert \cos \theta - \cos \pi \right\rvert 2^{j-k(1/2+ \delta) } \\
&\lesssim \left\lvert \theta - \pi \right\rvert  2^{j-k(1/2+ \delta) } \\
&= \varepsilon  2^{j-k(1/2+\delta) }, \end{align*} 
which proves \eqref{bound2}.
%Differentiating \eqref{directional} $m$ times with the chain rule gives 
%\begin{align*}   \left(\frac{d}{dt} \right)^m \eta_T(x+t\gamma(\theta)) &= \left( \left\langle A^{-1} U^* \gamma(\theta), \nabla \right\rangle^m \eta \right)\left( x+tA^{-1} U^* \gamma(\theta) \right). \end{align*}
Differentiating \eqref{directional} $m$ times and using \eqref{bound2} gives
\begin{equation} \label{bound3} \left\lvert\left( \frac{d}{dt} \right)^m \eta_T(x+t\gamma(\theta))\right\rvert %\lesssim_m \lvert A^{-1} U^*\gamma(\theta) \rvert^m \left\lVert \eta\right\rVert_{W^{m,\infty} } 
\lesssim_m \left(\varepsilon 2^{j-k(1/2+\delta)} \right)^m. \end{equation}
Putting \eqref{bound1} and \eqref{bound3} into \eqref{parts} and then using \eqref{anglerestriction} gives
\[ \left\lvert\int_{\mathbb{R}} \eta_T(x+t\gamma(\theta) ) e^{2\pi i t\langle \xi, \gamma(\theta) \rangle } \, dt\right\rvert \lesssim_m 2^{-2k\delta m}. \]
Putting this into \eqref{parts}, then \eqref{parts2} and then \eqref{parts3}, and taking $m$ large enough, gives
\[ \lVert \pi_{\theta\#} M_Tf\rVert_{L^{\infty}\left(\mathbb{R}^3, \mathcal{H}^2 \right)} \lesssim_N \lVert f\rVert_1 2^{-kN}. \]
Since $M_Tf$ is supported in a ball of radius $\sim 1$, this gives
\[ \lVert \pi_{\theta\#} M_Tf\rVert_{L^1(\mathbb{R}^3, \mathcal{H}^2)} \lesssim_N \lVert f\rVert_1 2^{-kN}. \qedhere \] \end{proof}

\subsection{Refined Strichartz inequality} \label{strichartz}
This main inequality of this subsection will make use of the decoupling theorem for the cone, from \cite{bourgain} (stated here only in $\mathbb{R}^3$ with $p \leq 6$). 
\begin{theorem}[{\cite[Theorem~1.2]{bourgain}}]  \label{decoupling} Let $K \geq 1$. Let $\{\tau\}$ be a finitely overlapping cover of the truncated cone
\[ \Gamma = \left\{ \left(\xi, \left\lvert \xi \right\rvert\right) \in \mathbb{R}^3 : 1 \leq \left\lvert \xi \right\rvert \leq 2 \right\} \]
by boxes $\tau$ of dimensions $1 \times K^{-1} \times K^{-2}$, with each $\tau$ contained in the $\sim K^{-2}$-neighbourhood of $\Gamma$. Let $F = \sum_{\tau} F_{\tau}$ be such that each $F_{\tau}$ has Fourier transform supported in $\tau$. Then for $2 \leq p \leq 6$ and any $\epsilon >0$,
\[ \left\lVert F \right\rVert_p \leq C_{\epsilon} K^{\epsilon} \left( \sum_{\tau} \left\lVert F_{\tau} \right\rVert_p^2 \right)^{1/2}. \]
\end{theorem}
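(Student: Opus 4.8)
The statement is precisely the Bourgain--Demeter $\ell^2$ decoupling theorem for the cone in $\mathbb{R}^3$, so the plan is simply to invoke \cite[Theorem~1.2]{bourgain}; for orientation I outline the structure of that argument. The first step would be to reduce conical decoupling to decoupling for the parabola. After splitting $\Gamma$ into boundedly many sub-arcs and applying rotations, the $1\times K^{-1}\times K^{-2}$ planks tiling a fixed sub-arc are the images, under a linear map adapted to the cone (a parabolic ``Lorentz'' rescaling fixing the cone), of the standard $K^{-1/2}$-caps of the two-dimensional truncated parabola. Since such maps distort $L^p$ norms in a controlled fashion, one checks that $\ell^2$ decoupling for the parabola at scale $\delta$ transfers to $\ell^2$ decoupling for the cone at scale $\delta$ with the same $\delta^{-\epsilon}$ loss --- the ``cone as a union of rescaled parabolas'' principle (cf.~\cite[Chapter~12]{demeter}).

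It then remains to prove $\ell^2$ decoupling for the parabola in $\mathbb{R}^2$ at the critical exponent $p=6$. Here I would run the Bourgain--Demeter induction on the scale $K$, combined with a Bourgain--Guth broad/narrow dichotomy: fix a large but $O(1)$ intermediate scale, and for each ball of that size separate the ``broad'' case, where the mass of $F$ is spread over transverse caps, from the ``narrow'' case, where it concentrates on a bounded cluster of caps. The narrow part is treated by parabolic rescaling and the inductive hypothesis, while the broad part is estimated using a transversal (bilinear/multilinear) restriction estimate for the parabola, which is where the curvature genuinely enters. Iterating the dichotomy over dyadic scales and summing the $\epsilon$-losses would give $\|F\|_6\lesssim_\epsilon K^\epsilon(\sum_\tau\|F_\tau\|_6^2)^{1/2}$, and interpolation with the trivial $L^2$ orthogonality bound, together with Bernstein's inequality, would extend this to the full range $2\le p\le 6$.

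The hard part in such a scheme is the broad estimate --- quantifying the gain coming from transversality of separated caps --- together with the bookkeeping needed to ensure that the $\epsilon$-losses accumulated over the $O(\log K)$ scales of the induction remain subpolynomial in $K$. For the present paper, however, neither difficulty is actually encountered: Theorem~\ref{decoupling} will be used only as stated, as the key input to the refined Strichartz inequality of this subsection (Theorem~\ref{refinedstrichartz}).
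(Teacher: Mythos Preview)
Your proposal is correct and matches the paper's treatment: Theorem~\ref{decoupling} is quoted from \cite{bourgain} without proof and used only as a black box in the refined Strichartz argument, exactly as you say. The outline you give of the Bourgain--Demeter scheme is accurate but not needed here.
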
 Fix $R \geq 1$ and $\delta>0$. The proof of the refined Strichartz inequality for the cone in $\mathbb{R}^3$ given here is similar to the paraboloid case from~\cite{GIOW}, with a few extra steps (similar to those in~\cite{me}) needed to deal with the obstruction that boxes dual to $R^{-1/2}$-caps in the cone do not intersect $R^{1/2}$ cubes in a clean way. Let $Y$ be a disjoint union of cubes $Q$ of side length $R^{1/2}$, all contained in $B_R= B_3(0,R)$. The truncated cone 
\[ \Gamma = \left\{ \left(\xi, \left\lvert \xi \right\rvert \right) \in \mathbb{R}^3 : 1 \leq \left\lvert \xi \right\rvert \leq 2 \right\} \]
 has a finitely overlapping cover by boxes (or caps) $\theta$ of dimensions
\[ 1 \times R^{-1/2} \times R^{-1}. \]
For each $\theta$, $B_R$ has a finitely overlapping cover by tubes $T$ of dimensions
\[ R^{(1/2)(1+\delta)} \times R^{(1/2)(1+\delta)} \times 3R, \]
with long axis normal to the cone at $\theta$. Let $\mathbb{T}_{\theta}$ be the set of tubes corresponding to $\theta$ and let $\mathbb{T} = \bigcup_{\theta} \mathbb{T}_{\theta}$.
 \begin{theorem} \label{refinedstrichartz} For any $p$ with $2 \leq p \leq 6$ and for any $\epsilon >0$, there exists $\delta_0 \ll \epsilon$ such that the following holds whenever $\delta \in (0, \delta_0)$. Suppose that
\[ f = \sum_{T \in \mathbb{W}} f_T, \]
where $\mathbb{W} \subseteq \mathbb{T}$ is nonempty, and $f_T{\restriction_{B_R}}$ is essentially supported in $T$ with $\widehat{f_{T}}$ essentially supported in $\theta(T)$. Suppose further that $\lVert f_T\rVert_2$ is constant over $T \in \mathbb{W}$ up to a factor of 2, and that each $Q \subseteq Y$ intersects at most $M$ tubes $2T$ with $T \in \mathbb{W}$, where $M \geq 1$. Then 
\begin{equation} \label{actualinequality} \lVert f\rVert_{L^p(Y)} \leq C_{\epsilon, \delta} R^{\epsilon} \left(\frac{MR^{-3/2}}{\left\lvert\mathbb{W}\right\rvert} \right)^{\frac{1}{2} - \frac{1}{p}} \left( \sum_{T \in \mathbb{W}} \lVert f_T\rVert_2^2 \right)^{1/2}. \end{equation}
\end{theorem}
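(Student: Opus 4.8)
The plan is to prove \eqref{actualinequality} by induction on the scale $R$, descending from $R$ to $R^{1/2}$ at each step, following the scheme used for the paraboloid in~\cite{GIOW} (with the cone-specific modifications of~\cite{me}); the sole analytic input is the cone decoupling theorem, Theorem~\ref{decoupling}. By homogeneity and the hypothesis that $\lVert f_T\rVert_2$ is constant over $T\in\mathbb{W}$ up to a factor of $2$, normalise $\lVert f_T\rVert_2\sim 1$, so the claim becomes $\lVert f\rVert_{L^p(Y)}\lesssim_{\epsilon,\delta}R^{\epsilon}M^{\frac12-\frac1p}R^{-\frac32(\frac12-\frac1p)}\lvert\mathbb{W}\rvert^{1/p}$. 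Let $N(R)$ denote the best constant for which this holds over all admissible configurations $(f,\mathbb{W},Y,M)$ at scale $R$; the goal is $N(R)\lesssim_{\epsilon,\delta}R^{\epsilon}$. When $R$ is bounded by a threshold depending only on $\epsilon,\delta$ the estimate is trivial, since then $M,\lvert\mathbb{W}\rvert=O(1)$ and, by Bernstein's inequality and $L^2$-orthogonality of the finitely overlapping caps $\theta$, $\lVert f\rVert_{L^p(Y)}\lesssim\lVert f\rVert_{L^p}\lesssim\lVert f\rVert_2\lesssim\lvert\mathbb{W}\rvert^{1/2}$; this is the base case. Crucially, only $\sim\log\log R$ inductive steps are needed to reach it, so a per-step loss of the form $R^{o(1)}$ remains $R^{o(1)}$ overall.

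For the inductive step, group the $R^{-1/2}$-caps $\theta$ into $R^{-1/4}$-caps $\sigma$ and write $f=\sum_\sigma g_\sigma$ with $g_\sigma=\sum_{T\in\mathbb{W}:\,\theta(T)\subseteq\sigma}f_T$. Combining the local form of Theorem~\ref{decoupling} at parameter $K=R^{1/4}$ (which is local at scale $K^2=R^{1/2}$), $L^2$-orthogonality, and pigeonholing over the cubes $Q\subseteq Y$, one reduces to estimating, for each $\sigma$, a scale-$R^{1/2}$ instance of the inequality for $g_\sigma$: applying the conical rescaling $L_\sigma$ that dilates $\sigma$ to a cap of size $\sim1$ (in $\sigma$-adapted coordinates, a rotation composed with $\operatorname{diag}(1,R^{1/4},R^{1/2})$, which preserves the light cone and carries the $R^{-1/2}$-caps inside $\sigma$, thickened to width $R^{-1}$, onto the scale-$R^{1/2}$ cap structure), the function $g_\sigma$ is carried to a function $\widetilde g_\sigma$ of the form treated by the inductive hypothesis $N(R^{1/2})$. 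Feeding in $N(R^{1/2})$, undoing the Jacobians, and summing over the $\sim R^{1/4}$ caps $\sigma$ using $\sum_\sigma\lvert\mathbb{W}_\sigma\rvert=\lvert\mathbb{W}\rvert$, Cauchy--Schwarz and the arithmetic of the exponents, one should obtain $N(R)\lesssim_\epsilon R^{C\delta+\epsilon/100}N(R^{1/2})$; iterating to the base case then gives $N(R)\lesssim_{\epsilon,\delta}R^{\epsilon}$, and this is exactly where the hypothesis $\delta<\delta_0\ll\epsilon$ enters, namely to absorb the accumulated $R^{C\delta}$ losses into $R^{\epsilon/2}$.

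The main obstacle, and the source of the ``a few extra steps'' relative to the paraboloid, is the reorganisation of the physical-space data under the anisotropic rescaling $L_\sigma$. The planks dual to $R^{-1/2}$-caps are not tubes and do not meet $R^{1/2}$-cubes cleanly, so under $L_\sigma$ the tubes of $\mathbb{W}_\sigma$ become slabs that must be re-covered by honest scale-$R^{1/2}$ tubes, the $R^{1/2}$-cubes of $Y$ become boxes that must be re-partitioned into $(R^{1/2})^{1/2}$-cubes (the ambient ball rescaling to an anisotropic box rather than to $B_{R^{1/2}}$, which must be cut up in the generator direction accordingly), and one must re-pigeonhole so that the $L^2$-norms of the reorganised wave packets are once more constant up to a factor of $2$; each such step costs only powers of $\log R$, harmless over $\sim\log\log R$ iterations. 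The technical heart is to carry out all of this while keeping the incidence bound $M$, the tube count, and the various Jacobian and cardinality factors under simultaneous control, so that the powers of $R$, $M$ and $\lvert\mathbb{W}\rvert$ balance and the induction closes with no more than the stated $R^{\epsilon}$ loss. The decoupling input, the base case, and the disposal of tail and error terms are otherwise routine.
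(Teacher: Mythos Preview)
Your proposal is correct and follows essentially the same approach as the paper: induction $R\to R^{1/2}$ via intermediate $R^{-1/4}$-caps, cone decoupling on $R^{1/2}$-cubes, Lorentz rescaling, and $\delta\ll\epsilon$ to close. The paper organises the ``extra steps'' you anticipate by introducing physical boxes $\Box$ dual to each $R^{-1/4}$-cap, subdividing each tube $T$ into thinner pieces $T'$ (which become honest scale-$R^{1/2}$ tubes after rescaling), and pigeonholing on four parameters $(\sigma,\mu,M',M'')$ so that the combinatorial identities $M'M''\lesssim \mu M$ and $\lvert\mathbb{B}\rvert\lvert\mathbb{W}'_{\Box,\sigma,\mu}\rvert\lesssim\mu\lvert\mathbb{W}\rvert$ balance the exponents exactly as you describe.
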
 
\begin{remark} The definition assumed here for ``$f_T{\restriction_{B_R}}$ is essentially supported in $T$'' will be that $\left\lVert f_T\right\rVert_{L^2(B_R \setminus T ) } \leq C_N R^{-N}\left\lVert f_T \right\rVert_2$ for some sufficiently large but fixed positive integer $N \geq 1$. Similarly ``$\widehat{f_{T}}$ is essentially supported in $\theta(T)$'' will mean that $\left\lVert\widehat{f_T}\right\rVert_{L^2(\mathbb{R}^3 \setminus \theta(T) ) } \leq C_N R^{-N}\left\lVert f_T \right\rVert_2$. The constant $C_{\epsilon,\delta}$ in the inequality may also depend implicitly on the choice of $N$ and corresponding $C_N$, but the dependence will be ignored since it is not important (the proof works for any sufficiently large fixed $N$). Similarly, the power $R^{-N}$ in the proof may vary from line to line, but this will be suppressed in the notation. \end{remark}
\begin{proof}[Proof of Theorem~\ref{refinedstrichartz}] The proof is by induction on the dyadic range of $R$; the inductive step will be handled by Lorentz rescaling and decoupling. By dyadic pigeonholing of the cubes $Q \subseteq Y$, it may be assumed that each cube contributes equally to the left-hand side of \eqref{actualinequality}, up to a factor of 2. Create a finitely overlapping cover of the cone $\Gamma$ with larger boxes $\tau$ of dimensions 
\[ \sim 1 \times R^{-1/4} \times R^{-1/2},  \]
and for each $\tau$, create a finitely overlapping cover of $B_R$ by boxes $\Box$ of dimensions 
\[ R^{(1/2)(1+ \delta)} \times R^{(3/4)(1+ \delta)} \times R, \] 
with directions dual to $\tau(\Box)$. Each $T \in \mathbb{W}$ has a finitely overlapping cover by boxes $T'$ of dimensions
\[ \frac{R^{1/4 + \delta/2}}{100} \times \frac{R^{1/2+ \delta/2}}{100} \times 3R, \]
with long axis normal to the cone at $\theta(T)$, and short axis in the flat direction of the cone at $\theta(T)$. For each $T'$ in the cover of $T$, define $\theta(T') = \theta(T)$ and $f_{T'} = \chi_{T'}f_T$, where the functions $\chi_{T'}$ form a smooth partition of unity such that each $\chi_{T'}$ (restricted to $B(0,R)$) is essentially supported in $T'$ with Fourier transform supported in a box of dimensions 
\[   \frac{R^{-1/4}}{100} \times \frac{R^{-1/2}}{100} \times \frac{R^{-1}}{100},\]
centred at the origin and with axis directions dual to $T'$. Such a partition can be constructed using the Poisson summation formula; the functions $\chi_{T'}$ may be complex-valued but satisfy $\left\lvert \chi_{T'} \right\rvert \lesssim 1$. Each $f_{T'}$ has Fourier transform essentially supported in $1.1\theta(T')$. 

 For each $T'$ there are $\lesssim 1$ caps $\tau$ such that $\theta(T') \cap \tau \neq \emptyset$, so each $\theta(T')$ can be paired to exactly one such $\tau=\tau(\theta(T'))$. Similarly, for this $\tau$, the tube $T'$ can be paired to exactly one set $\Box= \Box(\tau(\theta(T')))$ corresponding to $\tau$ such that $T' \subseteq \Box$, and each $T$ can be similarly paired to exactly 1 set $\Box$ (this is not trivial since $\theta(T)$ may not be in the centre of $\tau$, but requires only some elementary geometry of the cone). For each dyadic value $\sigma$ and each $\Box$, let
%\[ \mathbb{W}_{\Box} = \{ T'\subseteq T : T  \in \mathbb{W}: \theta(T') \subseteq \tau(\Box) , \quad T' \subseteq \Box  \}, \]
\begin{multline} \label{Wdefn} \mathbb{W}_{\Box,\sigma}' = \{ T' : T' \sim T \text{ for some } T \in \mathbb{W}, \\
 \theta(T') \sim \tau(\Box), \quad T' \sim \Box, \quad \left\lVert f_{T'} \right\rVert_2 \in [\sigma,2\sigma) \}, \end{multline}
where the use of $\sim$ in \eqref{Wdefn} refers to the aforementioned pairings. For each dyadic value $\mu$ let $\mathbb{W}_{\Box,\sigma,\mu}'$ be the subset consisting of those $T' \in \mathbb{W}_{\Box,\sigma}'$ such that the number of boxes $T'' \in \mathbb{W}_{\Box,\sigma}'$ in the larger tube $T=T(T')$ which contains $T'$ lies in $[\mu,2\mu)$. By this definition,
\begin{equation} \label{dyadicineq} \left\lVert f_{T'} \right\rVert_2^2 \lesssim c^2/\mu, \quad T' \in  \mathbb{W}_{\Box,\sigma,\mu}', \end{equation}
where $c$ is the dyadically constant value of $\left\lVert f_T\right\rVert_2$ assumed in the theorem statement. Let
\[ f_{\Box,\sigma, \mu} = \sum_{T' \in \mathbb{W}_{\Box,\sigma,\mu}'}  f_{T'}, \]
Then $f = \sum_{\Box,\sigma,\mu} f_{\Box,\sigma,\mu} + O\left( R^{-N} \sup_T \left\lVert f_T \right\rVert_2 \right)$ in $B(0,R)$. Each $\Box$ has a finitely overlapping cover by boxes $Q_{\Box}$ of dimensions 
\[ R^{1/4 + \delta/4} \times R^{1/2 + \delta/4} \times R^{3/4 + \delta/4}, \]
with the same axis orientations as $\Box$. For each $\Box$ let $\{\chi_{Q_{\Box}}\}_{Q_{\Box}}$ be a smooth partition of unity, such that each $\chi_{Q_{\Box}}$ is essentially supported in and rapidly decaying outside of $Q_{\Box}$, and has Fourier transform supported in a box of dimensions 
\[ R^{-1/4} \times R^{-1/2} \times R^{-3/4}, \]
centred at the origin and with long direction in the flat direction of $\tau(\Box)$. %The compromise on the control of the Fourier support in the last coordinate is necessary for the spatial control in the long direction of $Q_{\Box}$. The existence of this partition follows again from Poisson summation (as in \cite{me}), by first partitioning $Q_{\Box}$ into smaller pieces along the long direction and adding up functions from the finer partition with support inside $Q_{\Box}$.

 For each dyadic triple $(M',\sigma, \mu)$, let $Y_{\Box,M',\sigma,\mu}$ be the union of those boxes $Q_{\Box}$ with the property that $Q_{\Box}$ intersects $N \in [M',2M')$ sets $10T'$ with $T' \in \mathbb{W}_{\Box,\sigma,\mu}'$. Define 
\[ \chi_{Y_{\Box,M',\sigma,\mu} }= \sum_{Q_{\Box} \subseteq Y_{\Box,M',\sigma,\mu}} \chi_{Q_{\Box}}. \]
By dyadic pigeonholing, there exists a fixed triple $\left(M',\sigma,\mu\right)$ such that 
\[ \left\lVert f \right\rVert_{L^p(Q)} \lesssim_N (\log R)^3\left\lVert \sum_{\Box} \chi_{Y_{\Box,M',\sigma,\mu} }f_{\Box,\sigma,\mu} \right\rVert_{L^p(Q)} + R^{-N}\sup_{T} \left\lVert f_T\right\rVert_2, \]
for a fraction $\gtrsim 1/(\log R)^3$ of the cubes $Q \subseteq Y$ (the $\sup$ is not strictly necessary since $\lVert f_T\rVert_2$ is essentially constant in $T$). By dyadically pigeonholing the remaining cubes, there exists a collection $\mathbb{B}$ of sets $\Box$ such that $\left\lvert \mathbb{W}_{\Box,\sigma,\mu}' \right\rvert$ is constant up to a factor of 2 as $\Box$ ranges over $\mathbb{B}$, and such that 
\begin{equation} \label{pause50} \left\lVert f \right\rVert_{L^p(Q)} \lesssim_N (\log R)^4\Bigg\lVert  \sum_{\Box \in \mathbb{B}} \chi_{Y_{\Box} }f_{\Box,\sigma,\mu} \Bigg\rVert_{L^p(Q)} + R^{-N}\sup_T \left\lVert f_T\right\rVert_2, \end{equation}
for a fraction $\gtrsim 1/(\log R)^4$  of the cubes $Q \subseteq Y$, where $Y_{\Box} = Y_{\Box,M',\sigma,\mu}$ for the fixed triple $(M',\sigma,\mu)$ and for each $\Box$. By dyadically pigeonholing the remaining cubes again, there is a dyadic number $M''$ and a fraction $\gtrsim 1/(\log R)^5$ of the cubes $Q \subseteq Y$ satisfying \eqref{pause50}, such that each cube $2Q$ intersects $\sim M''$ different sets $Y_{\Box}$, as $\Box$ ranges over $\mathbb{B}$. Let $Y'$ be the union of these remaining cubes $Q \subseteq Y$.

 The $R^{1/2}$-cubes $Q$ are smaller than the sets $\Box$ by at least a factor of $R^{\delta/2}$ in every direction, which means that for every cap $\tau$, each $R^{1/2}$-cube $Q$ intersects $\lesssim 1$ sets $\Box \in \mathbb{B}$ corresponding to $\tau$. Moreover, since the short axis of each box $Q_{\Box}$ points in the long direction of $\tau(\Box)$, the functions $\chi_{Y_{\Box} }f_{\Box,\sigma,\mu}$ each have Fourier transform essentially supported in $C\tau(\Box)$ away from the origin.  Applying the decoupling theorem for the cone (see Theorem~\ref{decoupling}) to \eqref{pause50} therefore gives
\begin{align*} &\lVert f\rVert_{L^p(Q)} \\
&\quad \lesssim_{N,\epsilon,\delta} R^{\epsilon/4+O(\delta)} \left(  \sum_{\substack{\Box \in \mathbb{B} \\ Y_{\Box} \cap 2Q \neq \emptyset} } \left\lVert \chi_{Y_{\Box} }f_{\Box,\sigma,\mu} \right\rVert_{L^p\left(R^{\delta}Q\right)}^2 \right)^{1/2} + R^{-N}\sup_T \left\lVert f_T\right\rVert_2 \\
&\quad \lesssim R^{\epsilon/4+O(\delta)} \left(M''\right)^{\frac{1}{2} - \frac{1}{p} } \left(  \sum_{\Box \in \mathbb{B} } \left\lVert \chi_{Y_{\Box} }f_{\Box,\sigma,\mu} \right\rVert_{L^p\left(R^{\delta}Q\right)}^p \right)^{1/p} + R^{-N}\sup_T \left\lVert f_T\right\rVert_2. \end{align*}
In the first inequality, the restriction $Y_{\Box} \cap 2Q \neq \emptyset$ defining the sum comes from restricting the sum in \eqref{pause50} first, using the constraint that each $\chi_{Q_{\Box}}$ is essentially supported in $Q_{\Box}$, by construction. Taking both sides to the power $p$ and summing over $Q \subseteq Y'$ yields
\begin{multline} \label{lorentz} \lVert f\rVert_{L^p(Y)} \lesssim \left( \log R\right)^5 \lVert f\rVert_{L^p(Y')} \\
 \lesssim_{N, \epsilon,\delta} R^{\epsilon/3} \left( M'' \right)^{\frac{1}{2} - \frac{1}{p} }  \left(\sum_{\Box \in \mathbb{B}  } \left\lVert  f_{\Box,\sigma,\mu} \right\rVert_{L^p\left(Y_{\Box} \right) }^p \right)^{1/p} + R^{-N}\sup_{T} \left\lVert f_T\right\rVert_2. \end{multline}
After applying a Lorentz rescaling $L$ on the Fourier side, which is unitarily equivalent to $(\xi_1, \xi_2, \xi_3) \to \left( \xi_1, R^{1/4} \xi_2, R^{1/2}\xi_3 \right)$, the set $\Box$ becomes a cube $L^{-1}\left(\Box \right)$ of side lengths $\approx R^{1/2}$. The boxes $Q_{\Box}$ become cubes $L^{-1}(Q_{\Box})$ of radius $R^{1/4+\delta/4}$.  The function $L$ can be defined as the unique linear map which rescales $\tau(\Box)$ by $R^{1/4}$ along its intermediate axis and by $R^{1/2}$ along its short axis; this map leaves the flat direction of the cone at $\tau$ fixed and applies a parabolic rescaling along parabolic sections of the cone in the other directions (see e.g.~\cite[pp.~84--85]{demeter} for a description). Under this rescaling, the boxes $T'$ become tubes $L^{-1}(T')$ of dimensions 
\[  \sim R^{1/4+\delta/2} \times R^{1/4+\delta/2}\times R^{1/2}, \]
normal to the rescaled boxes $L(\theta(T'))$\footnote{This is not trivial since $\theta$ may not lie exactly in the centre of $\tau$; one justification is as follows. The map $L^{-1}$ sends the unit normal at $\theta(T')$ to a normal at $L(\theta(T'))$, whilst $L^{-1}$ sends the short axis $\frac{1}{\sqrt{2}}(\cos \theta, \sin \theta, 1)$ of $T'$ to a vector of size $\sim 1$, which lies in the same half of the light cone as $L(\theta(T'))$, and this vector therefore makes an angle $\sim 1$ with the normal to $L(\theta)$. The third axis of $T'$ makes an angle $\lesssim R^{-1/4}$ with the vector $(-\sin \tau, \cos \tau, 0)$, whose image under $L^{-1}$ is the same vector scaled by $R^{-1/4}$. Since the directions of these three image vectors are $\sim 1$ linearly independent and since $L$ has determinant $R^{3/4}$, this determines the direction and radius of $L^{-1}(T')$.}. The rescaled version of each $f_{T'}$ has Fourier transform essentially supported in $1.1L(\theta(T'))$.

 For a given cube $L^{-1}(Q_{\Box})$, the number of tubes $2L^{-1}(T')$ intersecting $L^{-1}(Q_{\Box})$ is $\lesssim M'$ by the pigeonholing step. Assume inductively that the theorem holds with $R$ replaced by any $\widetilde{R} \leq R^{3/4}$. Applying Lorentz rescaling and this inductive assumption at scale $\approx R^{1/2}$ to each summand in the right-hand side of \eqref{lorentz} gives
\[ \left\lVert  f_{\Box,\sigma,\mu} \right\rVert_{L^p\left(Y_{\Box} \right) } \lesssim_{\epsilon,\delta} C_{\epsilon,\delta} R^{\epsilon/2+ O(\delta)} \left( \frac{M'R^{-3/2}}{\left\lvert\mathbb{W}_{\Box,\sigma,\mu}' \right\rvert }  \right)^{\frac{1}{2} - \frac{1}{p}} \left( \sum_{T' \in \mathbb{W}_{\Box,\sigma,\mu}' } \left\lVert  f_{T'} \right\rVert_2^2 \right)^{1/2}. \]
The $R^{-3/2}$ factor in the preceding inequality comes from the $\approx R^{-3/4}$ factor in the inequality assumed at scale $\approx R^{1/2}$, multiplied with the $R^{-3/4}$ factor arising from the Jacobian of the Lorentz rescaling map. 

Putting this into \eqref{lorentz}, and recalling that $\left\lvert \mathbb{W}_{\Box,\sigma,\mu}' \right\rvert$ is essentially constant as $\Box$ ranges over $\mathbb{B}$, gives 
\begin{multline} \label{pause60} \lVert f\rVert_{L^p(Y)} \lesssim_{\epsilon,\delta} C_{\epsilon,\delta}R^{5\epsilon/6+ O(\delta)} \\
\times \left( \frac{M'M''R^{-3/2}}{\left\lvert \mathbb{W}_{\Box,\sigma,\mu}' \right\rvert } \right)^{\frac{1}{2} - \frac{1}{p} } \left( \sum_{\Box \in \mathbb{B}} \left(  \sum_{T' \in \mathbb{W}_{\Box,\sigma,\mu}' } \left\lVert  f_{T'} \right\rVert_2^2 \right)^{p/2} \right)^{1/p}. \end{multline}
By \eqref{dyadicineq}, the dyadically constant assumption on $\left\lvert \mathbb{W}_{\Box,\sigma,\mu}' \right\rvert$, the dyadically constant assumption $\left\lVert  f_T \right\rVert_2\sim c$ in the theorem statement, and the dyadically constant property of $\left\lVert  f_{T'} \right\rVert_2$ from \eqref{Wdefn}, 
\begin{align*} \sum_{\Box \in \mathbb{B}} \left(  \sum_{T' \in \mathbb{W}_{\Box,\sigma,\mu}' } \left\lVert  f_{T'} \right\rVert_2^2 \right)^{p/2} &\lesssim \left\lvert \mathbb{B} \right\rvert \left( \left\lvert \mathbb{W}_{\Box,\sigma,\mu}' \right\rvert  \frac{c^2}{\mu} \right)^{p/2} \\
&\lesssim \left\lvert \mathbb{B} \right\rvert \left( \frac{\left\lvert \mathbb{W}_{\Box,\sigma,\mu}' \right\rvert}{\left\lvert\mathbb{W}\right\rvert \mu} \sum_{T \in \mathbb{W}}  \left\lVert f_T \right\rVert_2^2 \right)^{p/2}.  \end{align*} 
Taking both sides to the power $1/p$ gives
\[  \left( \sum_{\Box \in \mathbb{B}} \left(  \sum_{T' \in \mathbb{W}_{\Box,\sigma,\mu}' } \left\lVert  f_{T'} \right\rVert_2^2 \right)^{p/2} \right)^{1/p} \lesssim  \left\lvert \mathbb{B} \right\rvert^{1/p}  \left( \frac{\left\lvert \mathbb{W}_{\Box,\sigma,\mu}' \right\rvert }{\left\lvert\mathbb{W}\right\rvert \mu} \sum_{T \in \mathbb{W}} \left\lVert f_T \right\rVert_2^2 \right)^{1/2}. \]
Putting this into \eqref{pause60} gives 
\begin{multline} \label{homebase} \lVert f\rVert_{L^p(Y)} \lesssim_{\epsilon,\delta} C_{\epsilon,\delta}R^{5\epsilon/6+ O(\delta)} \\
\times \left( \frac{M'M''R^{-3/2}}{\left\lvert \mathbb{W} \right\rvert\mu } \right)^{\frac{1}{2} - \frac{1}{p} } \left( \frac{  \left\lvert \mathbb{B} \right\rvert \left\lvert \mathbb{W}_{\Box,\sigma,\mu}' \right\rvert }{\left\lvert \mathbb{W} \right\rvert\mu } \right)^{1/p} \left( \sum_{T \in \mathbb{W}} \left\lVert  f_T \right\rVert_2^2  \right)^{1/2}. \end{multline} 

It remains to bound the two terms out the front of the right-hand side. For the second term, 
\[ \left\lvert \mathbb{W} \right\rvert\mu \quad = \quad  \sum_{T \in \mathbb{W}} \mu \quad \gtrsim  \quad \sum_{\Box \in \mathbb{B}} \sum_{T' \in \mathbb{W}_{\Box,\sigma,\mu}'} 1 \quad \sim \quad \left\lvert \mathbb{B} \right\rvert \left\lvert \mathbb{W}_{\Box,\sigma,\mu}' \right\rvert.  \]
Division gives 
\begin{equation} \label{Wbound} \frac{\left\lvert \mathbb{B} \right\rvert\left\lvert \mathbb{W}_{\Box,\sigma,\mu}' \right\rvert}{\left\lvert \mathbb{W} \right\rvert\mu} \lesssim 1, \end{equation}
which bounds the second bracketed term in \eqref{homebase}. For the first term in \eqref{homebase}, fix any cube $Q \subseteq Y'$. Then (denoting Lebesgue measure by $m$)
\begin{align}\notag  M'M''  &\lesssim \sum_{\substack{\Box \in \mathbb{B} \\ Y_{\Box} \cap 2Q \neq \emptyset }} M' \\
\notag &\lesssim \sum_{\substack{\Box \in \mathbb{B} \\ Y_{\Box} \cap 2Q \neq \emptyset }} \sum_{Q_{\Box} \subseteq Y_{\Box}} \frac{ M' m(Q_{\Box} \cap 3Q)}{m(Y_{\Box} \cap 3Q ) } \\
\label{precursor} &\lesssim \sum_{\substack{\Box \in \mathbb{B} \\ Y_{\Box} \cap 2Q \neq \emptyset }} \sum_{Q_{\Box} \subseteq Y_{\Box}} \sum_{\substack{T' \in \mathbb{W}_{\Box,\sigma,\mu}'   \\
 10T' \cap Q_{\Box} \neq \emptyset}} \frac{ m(Q_{\Box} \cap 3Q)}{m(Y_{\Box} \cap 3Q ) } \\
\label{explain} &\leq \sum_{\substack{\Box \in \mathbb{B} \\ Y_{\Box} \cap 2Q \neq \emptyset }} \sum_{Q_{\Box} \subseteq Y_{\Box}} \sum_{\substack{T' \in \mathbb{W}_{\Box,\sigma,\mu}'   \\
 2T(T') \cap Q \neq \emptyset }} \frac{ m(Q_{\Box} \cap 3Q)}{m(Y_{\Box} \cap 3Q ) } \\
\notag &\lesssim \sum_{\Box \in \mathbb{B}} \sum_{\substack{T' \in \mathbb{W}_{\Box,\sigma,\mu}'   \\
 2T(T') \cap Q \neq \emptyset }}  1 \\
\label{Mbound} &\lesssim \mu M, \end{align}  
where $T(T')$ is the large tube such that $T'$ is one of the boxes covering $T(T')$. Abbreviate $T(T')$ by $T$. The only step in the preceding chain of inequalities that does not follow straightforwardly from the definitions is that \eqref{precursor} is bounded by \eqref{explain}; to check this it suffices to show that given $\Box \in \mathbb{B}$, $T' \in \mathbb{W}_{\Box,\sigma,\mu}'$ and a cube $Q \subseteq Y'$
\begin{equation} \label{annoying10}  Q_{\Box} \cap 3Q \neq \emptyset \text{ and } 10T' \cap Q_{\Box} \neq \emptyset \quad  \Rightarrow \quad 2T(T') \cap Q \neq \emptyset. \end{equation}
To verify this, the assumption $10T' \cap Q_{\Box} \neq \emptyset$ implies that $Q_{\Box} \subseteq 20 T'$ (the set $L^{-1}(Q_{\Box})$ is a cube of side lengths $R^{1/4 + \delta/4}$ whilst $L^{-1}(T')$ is a tube of side lengths $\sim R^{1/4 + \delta/2} \times R^{1/4+\delta/2} \times R^{1/2}$ as outlined in the Lorentz rescaling step). The set $1.5T$ contains $B(0,2R) \cap 20T' \supseteq Q_{\Box}$ (for sufficiently large $R$), since the two shorter side lengths of $T$ exceed those of $T'$ by a factor of $100$. Therefore $1.5T$ contains $Q_{\Box} \cap 3Q$ and in particular intersects $3Q$. Hence $2T \cap Q \neq \emptyset$ since the side lengths of $T$ are much bigger than those of $Q$. This verifies \eqref{annoying10}. 

 Putting \eqref{Wbound} and \eqref{Mbound} into \eqref{homebase} gives 
\[ \lVert f\rVert_{L^p(Y)}  \lesssim_{\epsilon, \delta} C_{\epsilon,\delta}R^{5\epsilon/6+ O(\delta)} \left( \frac{MR^{-3/2}}{\left\lvert \mathbb{W} \right\rvert } \right)^{\frac{1}{2} - \frac{1}{p} } \left( \sum_{T \in \mathbb{W}} \left\lVert  f_T \right\rVert_2^2  \right)^{1/2}. \]
The induction will close if $\delta$ is small enough compared to $\epsilon$, and if $R \geq R_0$ for some large constant $R_0$, depending on $\epsilon$ and $\delta$, which is large enough to eliminate implicit constants (the theorem holds with constant $C_{\epsilon,\delta}$ if $R \leq R_0$). This finishes the proof.  \end{proof}

\section{Projections onto 2-dimensional planes: Proofs of Theorem~\ref{curvature}, and of Proposition~\ref{projection}} \label{sectFourier}
Theorem~\ref{curvature} (and a higher dimensional result, Proposition~\ref{projection}) will both be deduced as corollaries of the following more general proposition. To state it, let $G \in C^2\left( \overline{\Omega}, S^d \right)$ for some nonempty bounded open set $\Omega \subseteq \mathbb{R}^{d-1}$, and define $\beta(\alpha) = \beta_G(\alpha)$ to be the supremum over all $\beta \geq 0$ such that
\begin{equation} \label{surfacebound2} \int_{\Omega} \int_{1/2}^1 \left\lvert \widehat{\mu} (R \rho G(y) ) \right\rvert^2 \, d\rho \, dy \leq C_{G, \beta} \lVert \mu\rVert c_{\alpha}(\mu) R^{-\beta}, \end{equation}
for all Borel measures $\mu$ supported in the unit ball and all $R \geq 1$. Equivalently, $\beta(\alpha)$ is the supremum over all $\beta \geq 0$  such that 
\begin{equation} \label{nhdbound2} \int_{\mathcal{N}_{1/R}(\Gamma(G))} \left\lvert\widehat{\mu}(R\xi)\right\rvert^2 \, d\xi \leq C_{G,\beta}\lVert \mu\rVert c_{\alpha}(\mu) R^{-1 - \beta}, \end{equation}
for all Borel measures $\mu$ supported in the unit ball and all $R \geq 1$. See e.g.~\cite{oberlin} for the equivalence of \eqref{surfacebound2} and \eqref{nhdbound2}, and see Eq.~\eqref{Gammadefn} in Subsection~\ref{notation} on notation for the definition of $\Gamma(G)$.
\begin{proposition} \label{proposition} Let $\Omega$ be a nonempty bounded open set in $\mathbb{R}^{d-1}$ and fix $F,G \in C^2\left( \overline{\Omega}, S^d \right)$, such that $\langle F, G \rangle \equiv 0$,
\begin{equation} \label{rankassumption} \rank \begin{pmatrix} G & DF \end{pmatrix} \equiv d, \quad \rank \begin{pmatrix} F & DG \end{pmatrix} \equiv \rank DG  \equiv  d-1, \end{equation}
and 
\begin{equation} \label{cdn2} \min_{\substack{\lambda \in \mathbb{R} \\
y \in \overline{\Omega} }} \left\lvert \det \begin{pmatrix} F(y) & G(y) & DF(y) + \lambda DG(y) \end{pmatrix} \right\rvert>0, \end{equation}
where $DF$ and $DG$ denote the $(d+1) \times (d-1)$ matrices of partial derivatives of $F$ and $G$. Let $\pi_y$ be projection onto $\spn\{ F(y),G(y) \}$.

Fix $\alpha \in (0, d+1)$, and let $\mu$ be a Borel measure on $\mathbb{R}^{d+1}$ with $\diam \supp \mu \leq C$. If $s< \min\left\{ \alpha, \beta_G(\alpha)+\frac{1}{2} \right\}\leq 2$ then
\[ \int_{\Omega} I_s\left( \pi_{y \#} \mu \right) \, dy \lesssim_{\alpha,s,C,F,G,\Omega} c_{\alpha}(\mu) \mu\left( \mathbb{R}^{d+1} \right), \]
and if $\min\left\{ \alpha, \beta_G(\alpha) + \frac{1}{2} \right\} > 2$, then 
\[ \int_{\Omega} \left\lVert \pi_{y\#} \mu  \right\rVert_{L^2(\mathbb{R}^{d+1}, \mathcal{H}^2 )}^2 \, dy \lesssim_{\alpha,C,F,G,\Omega} c_{\alpha}(\mu) \mu\left( \mathbb{R}^{d+1} \right).  \]

Consequently, for any analytic set $B \subseteq \mathbb{R}^{d+1}$, 
\[ \dim \pi_y (B) \geq \min\left\{2, \dim B, \beta_G(\dim B) + \frac{1}{2} \right\}  \quad \text{for a.e.~$y \in \Omega$,} \]
and if $\min\left\{ \dim B, \beta_G(\dim B) + \frac{1}{2}\right\} >2$ then $\mathcal{H}^2(\pi_y(B)) >0$ for a.e.~$y \in \Omega$. 
\end{proposition}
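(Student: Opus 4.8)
The plan is to deduce the three geometric assertions from the two displayed integral estimates — which constitute the analytic core of the proposition and are established in the remainder of Section~\ref{sectFourier} — by the usual marriage of Frostman's lemma with the energy--dimension dictionary recalled in Subsection~\ref{notation}. Write $\tau:=\min\{2,\dim B,\beta_G(\dim B)+\tfrac12\}$. Since a countable union of null sets is null, I would fix a single $s$ with $0<s<\tau$ and prove that $\dim\pi_y(B)\ge s$ for a.e.~$y\in\Omega$ (together with the sharper conclusion $\mathcal{H}^2(\pi_y(B))>0$ in the case $\tau=2$), and then let $s$ increase to $\tau$ along a countable sequence, intersecting the resulting full-measure sets. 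One preliminary observation is that $\beta_G$ is non-decreasing: for a compactly supported measure $c_{\alpha_1}(\mu)\lesssim c_{\alpha_2}(\mu)$ whenever $\alpha_1<\alpha_2$, so the validity of \eqref{surfacebound2} at the exponent $\alpha_1$ forces it at $\alpha_2$. Hence, for $\alpha$ chosen sufficiently close to $\dim B$ from below, $\min\{\alpha,\beta_G(\alpha)+\tfrac12\}$ can be made larger than $s$; this is the one place where the gap between the Frostman exponent $\alpha<\dim B$ and the exponent $\dim B$ appearing in the statement must be bridged, and it is where (left-)continuity of $\beta_G$ at $\dim B$ is used.

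Suppose first that $\tau\le 2$, so $\tau=\min\{\dim B,\beta_G(\dim B)+\tfrac12\}$. I would pick $\alpha\in(s,\dim B)$ close enough to $\dim B$ that $s<\min\{\alpha,\beta_G(\alpha)+\tfrac12\}$; monotonicity of $\beta_G$ then guarantees automatically that $\min\{\alpha,\beta_G(\alpha)+\tfrac12\}\le 2$ (either $\dim B\le2$, in which case $\alpha<2$, or else $\beta_G(\dim B)\le\tfrac32$, in which case $\beta_G(\alpha)\le\tfrac32$). By Frostman's lemma and restriction to a ball there is a nonzero compactly supported Borel measure $\mu$ with $\supp\mu\subseteq B$ and $c_\alpha(\mu)<\infty$. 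The first estimate of the proposition gives $\int_\Omega I_s(\pi_{y\#}\mu)\,dy\lesssim c_\alpha(\mu)\,\mu(\mathbb{R}^{d+1})<\infty$, so $I_s(\pi_{y\#}\mu)<\infty$ for a.e.~$y\in\Omega$. For such $y$, the characterization of dimension via finite energy (Subsection~\ref{notation}) gives $\dim\supp(\pi_{y\#}\mu)\ge s$; and since $\supp\mu$ is compact and $\pi_y$ is continuous, $\supp(\pi_{y\#}\mu)\subseteq\pi_y(\supp\mu)\subseteq\pi_y(B)$, whence $\dim\pi_y(B)\ge s$. Letting $s\nearrow\tau$ finishes this case.

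Now suppose $\tau=2$, i.e.\ $\min\{\dim B,\beta_G(\dim B)+\tfrac12\}>2$; in particular $\dim B>2$ and $\beta_G(\dim B)>\tfrac32$. I would choose $\alpha\in(2,\dim B)$ close enough to $\dim B$ that $\beta_G(\alpha)>\tfrac32$, so that $\min\{\alpha,\beta_G(\alpha)+\tfrac12\}>2$, and take a nonzero compactly supported $\mu$ with $\supp\mu\subseteq B$ and $c_\alpha(\mu)<\infty$ as before. The second estimate of the proposition gives $\int_\Omega\|\pi_{y\#}\mu\|_{L^2(\mathbb{R}^{d+1},\mathcal{H}^2)}^2\,dy\lesssim c_\alpha(\mu)\,\mu(\mathbb{R}^{d+1})<\infty$, so for a.e.~$y$ the nonzero measure $\pi_{y\#}\mu$ is absolutely continuous with respect to $\mathcal{H}^2$ with $L^2$ density. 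Such a measure cannot be carried by an $\mathcal{H}^2$-null set, so $\mathcal{H}^2(\supp(\pi_{y\#}\mu))>0$; and since $\supp(\pi_{y\#}\mu)\subseteq\pi_y(\supp\mu)\subseteq\pi_y(B)$ as above, monotonicity of $\mathcal{H}^2$ gives $\mathcal{H}^2(\pi_y(B))>0$ — and hence also $\dim\pi_y(B)=2$ — for a.e.~$y\in\Omega$. This establishes the last two assertions, and the first follows by combining the two cases.

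The deduction itself is routine; the only genuinely delicate point is the one flagged in the first paragraph, namely controlling $\beta_G(\alpha)$ as $\alpha\uparrow\dim B$ so that the constant $\beta_G(\dim B)$ — rather than merely $\liminf_{\alpha\uparrow\dim B}\beta_G(\alpha)$ — appears in the conclusion; this is harmless in the intended applications, where the relevant $\beta_G$ is piecewise linear and continuous. The remaining technicalities are standard: the measurability in $y$ of $I_s(\pi_{y\#}\mu)$ and of $\|\pi_{y\#}\mu\|_{L^2(\mathcal{H}^2)}^2$ follows from Tonelli's theorem together with the weak continuity of $y\mapsto\pi_{y\#}\mu$, and all the facts linking finite energy, the density $c_\alpha$, and Hausdorff dimension are precisely those recalled in Subsection~\ref{notation}.
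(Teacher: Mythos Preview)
Your deduction of the geometric conclusions from the two integral estimates is correct and is, in fact, more carefully spelled out than the paper's own treatment of that step (the paper dispatches it in the single phrase ``Using Frostman's lemma and the Fourier energy, it suffices to show that \eqref{L2step2}''). The caveat you flag about left-continuity of $\beta_G$ at $\dim B$ is a real subtlety, and you are right that it is harmless in the applications.

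However, there is a structural misreading that leaves a genuine gap. You write that the integral estimates ``are established in the remainder of Section~\ref{sectFourier}'', but they are not: the remainder of that section consists of the proofs of Theorem~\ref{curvature} and Proposition~\ref{projection}, which \emph{apply} Proposition~\ref{proposition} rather than prove any part of it. The integral estimates are precisely what the paper's proof of Proposition~\ref{proposition} establishes, via the key inequality \eqref{L2step2}. That argument --- splitting the $(\eta_1,\eta_2)$-integral into the region $\eta_1>\eta_2^\kappa$ (handled by the Jacobian bound from \eqref{cdn2} and a change of variables reducing to $I_\alpha(\mu)$) and the region $\eta_1\le\eta_2^\kappa$ (handled by a further dyadic decomposition in $r$ and $1-t$, the localisation trick with the partition $\{\psi_m\}$, and the definition of $\beta_G(\alpha)$) --- is the entire substantive content of the proposition, and your proposal contains none of it. What you have written is only the routine Frostman-plus-energy wrapper around that analytic core.
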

% Most of this section will be devoted to proving Theorem~\ref{proposition}. 
\begin{remark} Roughly speaking, the condition \eqref{rankassumption} means that $F$ is a partial derivative of $G$, and that the 2-dimensional plane spanned by $F(y)$ and $G(y)$ does not get stuck in any hyperplane as $y$ varies. The two examples that Proposition~\ref{proposition} is based on are Theorem~\ref{curvature} and Proposition~\ref{projection}, which will both be deduced as corollaries in Subsection~\ref{corollaries} by verifying the conditions of Proposition~\ref{proposition} in each case. The conditions \eqref{rankassumption} and \eqref{cdn2} were originally obtained by proving Theorem~\ref{curvature} and Proposition~\ref{projection} separately, and then formalising the assumptions which make the proof work; the inequality \eqref{cdn2} ensures that a Jacobian in the proof does not vanish. \end{remark}
\begin{proof}[Proof of Proposition~\ref{proposition}] Assume that $\Omega = (0,1)^{d-1} + B(0, \delta)$ for some small $\delta >0$; without loss of generality it suffices to bound the integral over $(0,1)^{d-1}$. %Assume also that $\dim B >0$ since otherwise there is nothing to prove. 
Given $\alpha$ and $\mu$ with $c_{\alpha}(\mu) < \infty$, let  
\begin{equation} \label{epsilondefn5} \epsilon \in \left(0,\min\left\{ \frac{\alpha}{4}, \frac{1}{(20(d+1))^4} \right\} \right). \end{equation} 
Set
\begin{equation} \label{alphadefn5} \alpha^* = \min  \left\{ 2,\alpha-\epsilon,\beta(\alpha) + \frac{1}{2}-10(d+1)\epsilon^{1/4} \right\}, \quad \kappa = 1-\frac{\epsilon}{2(d-1)}. \end{equation}
Then $\alpha^* >0$ and $\kappa \in (0,1)$ by \eqref{epsilondefn5}. Using Frostman's lemma and the Fourier energy, it suffices to show that 
\begin{equation} \label{L2step2}  \int_{\mathbb{R}^2} \lvert \eta\rvert^{\alpha^*-2}\int_{[0,1]^{d-1}} \left\lvert \widehat{\mu}\left( \eta_1F(y) + \eta_2 G(y) \right) \right\rvert^2 \, dy \,  d\eta \lesssim \max\left\{I_{\alpha}(\mu), \left\lVert \mu\right\rVert c_{\alpha}(\mu) \right\} \end{equation}
To show this, by symmetry it suffices to bound the integral over the positive quadrant $\mathbb{R}^2_+$. This integral can be written as
\begin{align}\notag &\int_{\mathbb{R}^2_+} \lvert \eta\rvert^{\alpha^*-2}\int_{[0,1]^{d-1}} \left\lvert \widehat{\mu}\left( \eta_1F(y) + \eta_2 G(y) \right) \right\rvert^2 \, dy \,  d\eta \\
\label{energy2} &\quad = \int_{\eta_1 > \eta_2^{\kappa} \geq 0} \lvert \eta\rvert^{\alpha^*-2}\int_{[0,1]^{d-1}} \left\lvert \widehat{\mu}\left( \eta_1F(y) + \eta_2 G(y) \right) \right\rvert^2 \, dy \,  d\eta  \\
\label{singular2} &\qquad + \int_{0 \leq \eta_1  \leq \eta_2^{\kappa}} \lvert \eta\rvert^{\alpha^*-2}\int_{[0,1]^{d-1}} \left\lvert \widehat{\mu}\left( \eta_1F(y) + \eta_2 G(y) \right) \right\rvert^2 \, dy \,  d\eta. \end{align}
 Let 
\begin{equation} \label{variablechange2} \xi = \xi(\eta, y) = \eta_1F(y) + \eta_2 G(y). \end{equation}
Then
\[ \left\lvert\frac{d\xi}{d\eta \, dy }\right\rvert = \left\lvert \det\begin{pmatrix} F(y)  & G(y) & \eta_1 DF(y) + \eta_2 DG(y) \end{pmatrix} \right\rvert \gtrsim \eta_1^{d-1}, \]
by the assumption in \eqref{cdn2}.  Applying the change of variables from \eqref{variablechange2} to the integral in \eqref{energy2} therefore results in
\begin{multline*} \int_{\eta_1 > \eta_2^{\kappa} \geq 0} \lvert \eta\rvert^{\alpha^*-2}\int_{[0,1]^{d-1}} \left\lvert \widehat{\mu}\left( \eta_1F(y) + \eta_2 G(y) \right) \right\rvert^2 \, dy \,  d\eta \\
 \lesssim \|\mu\|^2+  \int_{\mathbb{R}^{d+1}} \lvert \xi\rvert^{\alpha^* + (1-\kappa)(d-1)-(d+1)} \left\lvert \widehat{\mu}\left( \xi \right) \right\rvert^2 \,  d\xi \lesssim \|\mu\|c_{\alpha}(\mu) + I_{\alpha}(\mu), \end{multline*}
since $\alpha^* + (1-\kappa)(d-1) < \alpha$ by \eqref{alphadefn5}.

For the remaining integral in \eqref{singular2}, define $r$ and $t$ as functions of $\eta_1$ and $\eta_2$ by 
\begin{equation} \label{rchange2} r^2 = \eta_1^2 + \eta_2^2, \quad \eta_2 = rt, \end{equation}
so that 
\[ \left\lvert\frac{ dr \, dt}{d\eta_1 \, d\eta_2 } \right\rvert = \frac{\sqrt{1-t^2}}{r} \] 

 Using the change of variables from \eqref{variablechange2} and \eqref{rchange2}, the integral in \eqref{singular2} will be shown to satisfy
\begin{align} \notag & \int_{0 < \eta_1 \leq \eta_2^{\kappa} } \lvert \eta\rvert^{\alpha^*-2}\int_{[0,1]^{d-1}} \left\lvert \widehat{\mu}\left( \eta_1F(y) + \eta_2 G(y) \right) \right\rvert^2 \, dy \,  d\eta \\ 
\notag &\quad \lesssim_{\epsilon} 1  \\
\label{preneighbourhood2} &\qquad + \sum_{j \geq (1-\kappa)^{-1}} \sum_{k= \left\lfloor 2j(1-\kappa) \right\rfloor-2}^{\left\lfloor j (1-\epsilon) \right\rfloor} \\
\notag &\qquad \quad \int_{1-2^{-(k-1)}}^{1-2^{-(k+1)}}  \int_{2^{j-1}}^{2^j} \int_{[0,1]^{d-1}} \frac{2^{j(\alpha^*-1)}}{\sqrt{1-t^2}} \left\lvert \widehat{\mu}\left( r  G_t(y) \right) \right\rvert^2 \, dy \, dr \, dt \\
\notag &\qquad +  \sum_{j \geq (1-\kappa)^{-1}} \\
\notag &\qquad \quad  \int_{1-2^{-j(1-\epsilon)}}^1  \int_{2^{j-1}}^{2^j} \int_{[0,1]^{d-1}} \frac{2^{j(\alpha^*-1)}}{\sqrt{1-t^2}} \left\lvert \widehat{\mu}\left( r  G_t(y) \right) \right\rvert^2 \, dy \, dr \, dt \\
\notag &\quad \lesssim_{\epsilon} 1  \\
\label{neighbourhood2}  &\qquad+  \sum_{j \geq (1-\kappa)^{-1}} \sum_{k=\left\lfloor 2j(1-\kappa) \right\rfloor-2}^{\left\lfloor j (1-\epsilon) \right\rfloor} \int_{\mathcal{N}_{C2^{-k}}(\Gamma(G) \cup \frac{1}{2} \Gamma(G))} 2^{j\alpha^* + \frac{k}{2}} \left\lvert \widehat{\mu}\left( 2^j \xi \right) \right\rvert^2 \,   d\xi \\
\label{surface2} &\qquad+  \sum_{j \geq (1-\kappa)^{-1}} \int_{1-2^{-(j-1)(1-\epsilon)}}^1 \int_{2^{j-1}}^{2^j} \int_{[0,1]^{d-1}}  \frac{2^{j(\alpha^*-1)}}{\sqrt{1-t^2}} \left\lvert \widehat{\mu}\left( r  G_t(y) \right) \right\rvert^2 \, dy \, dr \, dt, \end{align}
where $C>2$ is a large constant to be chosen in a moment, 
\[  G_t(y) :=  \sqrt{1-t^2}F(y) + tG(y), \]
and $\Gamma(G)$ assumes $G$ has domain $[0,1]^{d-1} + B(0, \delta)$. The only preceding inequality that does not follow from the change of variables and a dyadic decomposition is that \eqref{preneighbourhood2} is bounded by \eqref{neighbourhood2}. To see this, it suffices to check that the equality   
\begin{equation} \label{equality2} r\sqrt{1-t^2}F(y) + rtG(y) = 2^j \xi, \quad 2^{j-1} \leq r \leq 2^j \end{equation}
implies that $\xi \in \mathcal{N}_{C2^{-k}}(\Gamma(G) \cup \frac{1}{2} \Gamma(G))$. This holds trivially if $k=0$ so assume $k \geq 1$. Division of \eqref{equality2} by $2^j$ gives 
\begin{equation} \label{perturb2} \xi = \lambda_1 F(y) + \lambda_2 G(y), \quad \text{where} \quad \lvert \lambda_1\rvert \leq 2^{1/2} \cdot 2^{-k/2} \text{ and } 1/4 \leq \lambda_2 \leq 1. \end{equation}
By the assumption \eqref{rankassumption} in the proposition and by compactness,
\[ F(y)  = DG(y) x, \]
for some $x \in \mathbb{R}^{d-1}$ with $\lvert x\rvert \lesssim_{F,G} 1$. Letting $h = \frac{\lambda_1x}{\lambda_2}$ in \eqref{perturb2} gives
\[ \xi = \lambda_2 G(y+h) + O\left(\left\lvert h\right\vert^2\right), \]
where the implicit constant is uniform depending only on $G$. Since $x$ is uniformly bounded, $h$ satisfies $\lvert h\rvert \lesssim \lambda_1$, and therefore $\dist(\xi, \Gamma(G) \cup \frac{1}{2} \Gamma(G)) \lesssim \lambda_1^2 \lesssim 2^{-k}$. Hence $\xi \in \mathcal{N}_{C2^{-k}}(\Gamma(G) \cup \frac{1}{2} \Gamma(G))$ provided $C$ is chosen large enough.  This verifies that \eqref{preneighbourhood2} is bounded by \eqref{neighbourhood2}.

It remains to bound the sums in \eqref{neighbourhood2} and \eqref{surface2}. Let $\rho = 2^{j-k}$ and let $\mu_{\rho}$ be the pushforward measure $\rho_{\#} \mu$, which is defined by $\mu_{\rho}(E) = \mu( \rho^{-1} E)$ for any Borel set $E$. Let $\{B_m\}$ be a finitely overlapping cover of $B(0,\rho)$ by unit balls and let $\{\psi_m\}$ be a smooth partition of unity subordinate to this cover. Let $\zeta \in C^{\infty}\left(\mathbb{R}^{d+1}\right)$ be a smooth bump function equal to 1 on $B(0,C')$ and supported in $B(0,2C')$, for some constant $C'>C$ to be chosen later. Let $\nu_k$ be the pushforward of the Lebesgue measure on $([0,1]^{d-1} +B(0,\delta))\times \left[2^{k-2},2^k \right]$ under $(y, \lambda) \mapsto \lambda G(y)$, and define $\phi$ on $\mathbb{R}^{d+1}$ by 
\begin{align*} \phi(\xi) &= 2^{k(d-1)}(\nu_k \ast \zeta)(\xi)  \\
&= 2^{k(d-1)} \int \zeta(\xi - \eta) \, d\nu_k(\eta)  \\
&= 2^{k(d-1)} \int_{[0,1]^{d-1} +B(0,\delta)} \int_{2^{k-1}}^{2^k} \zeta(\xi - \lambda G(y) ) \, d\lambda \, dy. \end{align*}
 Then $\phi$ has support in $\mathcal{N}_{2C'}(\Gamma_{2^k}(G) \cup \Gamma_{2^{k-1}}(G))$, with $\phi \sim 1$ on $\mathcal{N}_{C}(\Gamma_{2^k} \cup \Gamma_{2^{k-1}}(G))$, where $C'$ is now chosen large enough to ensure this. The inverse Fourier transform of $\phi$ satisfies 
\[ \left\lvert \widecheck{\phi}(x) \right\rvert \lesssim_N 2^{kd} \lvert x\rvert^{-N}, \]
since $\zeta$ is Schwartz. Hence 
\begin{align}\notag &\int_{\mathcal{N}_{C2^{-k}}(\Gamma(G) \cup \frac{1}{2} \Gamma(G))}  \left\lvert \widehat{\mu}\left( 2^j \xi \right) \right\rvert^2 \,   d\xi \\
 \notag &\quad \lesssim \int \phi\left(2^k \xi\right) \left\lvert \widehat{\mu_{\rho}}\left( 2^k \xi \right) \right\rvert^2 \,   d\xi \\
\notag & \quad= \frac{1}{2^{(d+1)k}}\int \phi\left(\xi\right) \left\lvert \sum_m  \widehat{\psi_m\mu_{\rho}}\left(\xi\right) \right\rvert^2 \,   d\xi \\
\label{diag20} &\quad\lesssim \rho^{(d+1)\epsilon^2} \sum_m \int_{\mathcal{N}_{2C'2^{-k}}\left(\Gamma(G) \cup \frac{1}{2} \Gamma(G)\right)} \left\lvert\widehat{\psi_m \mu_{\rho}}\left(2^k \xi\right) \right\rvert^2 \, d\xi  \\
\label{offdiag20} &\qquad + \frac{1}{2^{(d+1)k}}\sum_{\dist(B_m, B_n) \geq \rho^{\epsilon^2}} \left\lvert \int  \phi(\xi)\widehat{\psi_m \mu_{\rho}}(\xi ) \overline{\widehat{\psi_n \mu_{\rho}}(\xi )} \, d\xi  \right\rvert. \end{align}

By the definition of $\beta(\alpha)$ (see \eqref{nhdbound2}), the summands in \eqref{diag20} satisfy
\begin{equation} \label{diag4} \int_{\mathcal{N}_{2C'2^{-k}}\left(\Gamma(G) \cup \frac{1}{2} \Gamma(G) \right)} \left\lvert\widehat{\psi_m \mu_{\rho}}\left(2^k \xi\right) \right\rvert^2 \, d\xi \lesssim_{\epsilon} \lVert \psi_m \mu_{\rho} \rVert c_{\alpha}( \mu_{\rho})  2^{k(\epsilon- \beta(\alpha)-1)}. \end{equation} 
By the version of Plancherel's Theorem for measures (see e.g.~\cite[Eq. 3.27]{mattila4}), the summands in \eqref{offdiag20} satisfy 
\begin{align} \notag  \left\lvert \int_{\mathbb{R}^{d+1}} \phi(\xi)\widehat{\psi_m \mu_{\rho}}(\xi ) \overline{\widehat{\psi_n \mu_{\rho}}(\xi )} \, d\xi  \right\rvert &= \left\lvert \int \int \widecheck{\phi}(x-y) \psi_m(x) \psi_n(y) \, d\mu_{\rho}(x) \, d\mu_{\rho}(y) \right\rvert  \\
\label{offdiag30} &\lesssim_N 2^{(d+1)k} \rho^{-\epsilon^2 N}\lVert \psi_m \mu_{\rho}\rVert\lVert \psi_n\mu_{\rho}\rVert. \end{align} 
Substituting \eqref{diag4} and \eqref{offdiag30} with $N = \left\lceil \epsilon^{-4} \right\rceil$ into \eqref{diag20} and \eqref{offdiag20} gives
\begin{align*} &\int_{\mathcal{N}_{C2^{-k}}(\Gamma(G) \cup \frac{1}{2} \Gamma(G))}  \left\lvert \widehat{\mu}\left( 2^j \xi \right) \right\rvert^2 \,   d\xi \\
&\quad \lesssim_{\epsilon} \rho^{(d+1)\epsilon^2} \left\lVert \mu_{\rho}\right\rVert c_{\alpha}( \mu_{\rho}) 2^{k(\epsilon- \beta(\alpha)-1)} + 2^{-k}\rho^{\frac{-1}{\epsilon^2}}\left\lVert \mu_{\rho} \right\rVert^2  \\
&\quad \leq  \lVert \mu\rVert c_{\alpha}(\mu) \left(2^{j((d+1)\epsilon^2-\alpha)} 2^{k(\alpha-\beta(\alpha) -1+\epsilon)} +  2^{\frac{-j}{\epsilon^2}} 2^{-k+\frac{k}{\epsilon^2}}\right) \\
&\quad \lesssim \lVert \mu\rVert c_{\alpha}(\mu) 2^{j((d+1)\epsilon^2-\alpha)} 2^{k(\alpha-\beta(\alpha) -1+\epsilon)},
 \end{align*}
since $k \leq j(1-\epsilon)$ and $\epsilon$ is small (see the definition in \eqref{epsilondefn5}). Applying this bound to \eqref{neighbourhood2} gives
\begin{multline*} \eqref{neighbourhood2} \lesssim_{\epsilon} \lVert \mu\rVert c_{\alpha}(\mu) \sum_{j \geq (1-\kappa)^{-1}}  2^{j\left(\alpha^* - \alpha +(d+1)\epsilon^2\right)} \sum_{k = \left\lfloor 2j(1-\kappa) \right\rfloor-2}^{\left\lceil j(1-\epsilon) \right\rceil } 2^{k(\alpha-\beta(\alpha) -\frac{1}{2}+\epsilon)}\\ \lesssim_{\epsilon} \lVert \mu\rVert c_{\alpha}(\mu), \end{multline*}
by the definition of $\alpha^*$ and $\kappa$ in \eqref{alphadefn5}. This finishes the bound on \eqref{neighbourhood2}.

It remains to bound the integral in \eqref{surface2}. Let $\zeta_N(x) = \frac{1}{1+\lvert x\rvert^N}$ for some large $N$ to be chosen later. Then $\lvert \widehat{\mu}\rvert \lesssim_N \left\lvert\widehat{\mu}\right\rvert \ast \zeta_N$ since $\mu = \mu \varphi$ for a fixed Schwartz function $\varphi$ equal to 1 on the unit ball. Hence 
\begin{equation} \label{convolve} \int_{[0,1]^{d-1}}  \left\lvert \widehat{\mu}\left( r  G_t(y) \right) \right\rvert^2 \, dy \lesssim_N \int_{[0,1]^{d-1}}  \left(\left\lvert \widehat{\mu} \right\rvert \ast \zeta_N \right)(rG_t(y))^2 \, dy. \end{equation}
Cover the cube $[0,1]^{d-1}$ with cubes of side length $1/M$. If $M$ is large enough, then by the assumptions on $F$ and $G$ from \eqref{rankassumption}, in each cube $F(y)$ can be written as $F(y) = DG(y) x_y$, where $x$ is a smooth function of $y$ in each cube with $\lvert x\rvert \lesssim 1$ and the Jacobian of $y \mapsto x_y$ is $\lesssim 1$. Hence 
\[ G_t(y) = \sqrt{1-t^2}F(y) + tG(y) = tG\left(y + \frac{x_y\sqrt{1-t^2}}{t} \right) + O\left(2^{-j(1-\epsilon)} \right), \]
where the implicit constant is uniform. Using the essentially constant property (similarly to \eqref{constantproperty}) of $\zeta_N$ gives, with $2^{j-1} \leq r \leq 2^j$,
\[ \left(\left\lvert \widehat{\mu} \right\rvert \ast \zeta_N \right)(rG_t(y)) \lesssim_N 2^{j\epsilon N} \left(\left\lvert \widehat{\mu} \right\rvert \ast \zeta_N \right)\left( rtG\left(y + \frac{x_y\sqrt{1-t^2}}{t} \right)\right). \]
Combining this with \eqref{convolve} and applying the change of variables $\widetilde{y} = y + \frac{x_y\sqrt{1-t^2}}{t}$ gives, for $j$ large enough,
\[  \int_{[0,1]^{d-1}}  \left\lvert \widehat{\mu}\left( r  G_t(y) \right) \right\rvert^2 \, dy \lesssim_N 2^{j \epsilon N} \int_{[0,1]^{d-1}+B(0,\delta)}  \left(\left\lvert \widehat{\mu} \right\rvert \ast \zeta_N \right)(rtG(y))^2 \, dy, \]
where $t \geq 2^{-(j-1)(1-\epsilon)}$. Putting this into the two innermost integrals of \eqref{surface2} and applying Minkowski's inequality results in 
\begin{align} \notag &\int_{2^{j-1}}^{2^j} \int_{[0,1]^{d-1}} \left\lvert \widehat{\mu}\left( r  G_t(y) \right) \right\rvert^2 \, dy \, dr \\
\notag  &\quad \lesssim_N 2^{j \epsilon N} \int_{2^{j-1}}^{2^j} \int_{[0,1]^{d-1}+B(0,\delta)}  \left(\left\lvert \widehat{\mu} \right\rvert \ast \zeta_N \right)(rtG(y))^2 dy \, dr \\
\label{complex} &\quad \lesssim_{\mu}  2^{j \epsilon N}  \left(\int_{B\left(0, 2^{j \sqrt{\epsilon}}\right)}  \left(\int_{2^{j-1}}^{2^j} \int_{[0,1]^{d-1}+B(0,\delta)}  \left\lvert \widehat{\mu_{z,t}}(rG(y)) \right\rvert^2 \, dy \, dr\right)^{1/2} \, dz\right)^2 \\
\notag &\qquad + 2^{j \left(\epsilon -\sqrt{\epsilon}\right)N+j + (d+1) \sqrt{\epsilon}}\lVert \mu\rVert^2, \end{align}
where for each $t \in [1/2,1]$ and $z \in B\left(0, 2^{j \sqrt{\epsilon} } \right)$, $\mu_{z,t}$ is the complex measure defined by 
\[ \int f(x) \, d\mu_{z,t}(x) = \int f(tx) e^{2\pi i \langle z, x \rangle } \, d\mu(x). \]
The positive measure $\lvert \mu_{z,t}\rvert$ has support of comparable size to $\mu$ and satisfies $c_{\alpha}(\lvert \mu_{z,t}\rvert) \lesssim c_{\alpha}(\mu)$, so applying the triangle inequality and the definition (see  \eqref{surfacebound2}) of $\beta(\alpha)$ to \eqref{complex} gives 
\[\int_{2^{j-1}}^{2^j} \int_{[0,1]^{d-1}} \left\lvert \widehat{\mu}\left( r  G_t(y) \right) \right\rvert^2 \, dy \, dr \lesssim_{\epsilon}   \lVert \mu\rVert c_{\alpha}(\mu) 2^{j( (d+1) \sqrt{\epsilon}+\epsilon)} 2^{j(1- \beta(\alpha))},  \]
by choosing $N \sim \epsilon^{-3/4}$, since $\epsilon$ is very small. Putting this into \eqref{surface2} yields  
\begin{align*} \eqref{surface2} &\lesssim_{\epsilon} \lVert \mu\rVert c_{\alpha}(\mu) \\
&\quad + \lVert\mu\rVert c_{\alpha}(\mu) \sum_{j \geq (1-\kappa)^{-1}} 2^{j(\alpha^*-\beta(\alpha)+ (d+1) \sqrt{\epsilon}+\epsilon^{1/4})}  \int_{1-2^{-(j-1)(1-\epsilon)}}^1   \frac{1}{\sqrt{1-t^2}}  \, dt \\
& \lesssim \lVert \mu\rVert c_{\alpha}(\mu)\left(1+ \sum_{j \geq (1-\kappa)^{-1}} 2^{j(\alpha^*-\beta(\alpha) - \frac{1}{2}+ (d+1) \sqrt{\epsilon}+\epsilon+\epsilon^{1/4})}\right) \\
&\lesssim \lVert\mu\rVert c_{\alpha}(\mu)  \end{align*}
by the definition of $\alpha^*$ in \eqref{alphadefn5}.  

 This bounds the remaining piece, finishing the bound on \eqref{singular2}, which gives \eqref{L2step2} and finishes the proof. %Hence
%\[ \dim \pi_y(B) \geq \min\left\{2,s -\epsilon, \beta(s)+ \frac{1}{2}-10(d+1)\epsilon^{1/4}\right\} \quad \text{for a.e.~$y \in [0,1]^{d-1}$.} \] 
%Letting $\epsilon \to 0$ gives
%\[ \dim \pi_y(B) \geq \min\left\{2, \dim B, \beta( \dim B)+ \frac{1}{2}\right\} \quad \text{for a.e.~$y \in [0,1]^{d-1}$,} \] 
%see e.g.~\cite[Lemma~3.1]{wolff} for the continuity of $\beta(\cdot)$. If $\min\left\{\dim B, \beta( \dim B)+ \frac{1}{2}\right\}> 2$, then $\alpha = 2$ for $\epsilon$ small enough, and the finiteness of \eqref{L2step2} yields
%\[ \left\lVert  U_{y\#}\pi_{y\#}\mu \right\rVert_{2,0, \mathbb{R}^2} = \left\lVert  \pi_{y\#}\mu \right\rVert_{L^2(\mathbb{R}^{d+1}, \mathcal{H}^2)}<\infty, \]
%for a.e.~$y \in [0,1]^{d-1}$ (see e.g.~\cite[Theorem~3.3]{mattila4} for the version of Plancherel's Theorem for measures). For such $y$ the set $\pi_y(B)$ therefore supports a nonzero function in $L^2\big(\mathbb{R}^{d+1}, \mathcal{H}^2\big)$, implying that $\mathcal{H}^2(\pi_y(B)) >0$. This finishes the proof.  
\end{proof} 

\subsection{Weighted Fourier restriction inequality for curves of nonvanishing geodesic curvature, and proof of Theorem~\ref{curvature}} \label{corollaries}

The proof of Theorem~\ref{curvature} will require the following (sharp) Fourier restriction inequality from~\cite{oberlin} (see also~\cite{erdogan}).

\begin{theorem}[{\cite[Theorem~1.1]{oberlin}}] \label{torsionfourier}
If $\gamma: [a,b] \to S^2$ is $C^2$ with $\det\left(\gamma,\gamma',\gamma''\right)$ nonvanishing, then for any $\epsilon >0$ and $\alpha \in [0,3]$,  
\begin{equation} \label{surfacebound} \int_a^b \int_{1/2}^1 \left\lvert \widehat{\mu} (R \rho \gamma(\theta) ) \right\rvert^2 \, d\rho \, d\theta \leq C_{\epsilon, \gamma} \lVert \mu\rVert c_{\alpha}(\mu) R^{\epsilon - \beta(\alpha)}, \end{equation}
and 
\begin{equation} \label{nhdbound} \int_{\mathcal{N}_{1/R}(\Gamma(\gamma))} \left\lvert\widehat{\mu}(R\xi)\right\rvert^2 \, d\xi \leq C_{\epsilon, \gamma}\lVert \mu\rVert c_{\alpha}(\mu) R^{\epsilon-1- \beta(\alpha)}, \end{equation}
for all $R \geq 1$ and any positive Borel measure $\mu$ supported in the unit ball of $\mathbb{R}^3$, where
\[ \beta(\alpha) := \begin{cases} \alpha, & \alpha \in [0,1/2] \\
1/2, & \alpha \in (1/2,1] \\
\alpha/2, & \alpha \in (1,2] \\
\alpha -1, & \alpha \in (2,3]. \end{cases} \]  \end{theorem}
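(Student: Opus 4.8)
The plan is to prove \eqref{surfacebound} by a direct oscillatory-integral argument and to deduce \eqref{nhdbound} from it. The two formulations are interchangeable: \eqref{nhdbound} follows from \eqref{surfacebound} by averaging over a slightly fattened annulus and applying Fubini, while \eqref{surfacebound} is recovered by summing \eqref{nhdbound} over dyadic radii, exactly as in the equivalence of \eqref{surfacebound2} and \eqref{nhdbound2}; I would dispose of this reduction first and then focus on \eqref{surfacebound}.

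For \eqref{surfacebound}, expand the square and integrate in the cone parameters. Since $\mu$ is a positive measure, Fubini gives
\[ \int_a^b \int_{1/2}^1 \left\lvert \widehat{\mu}(R\rho\gamma(\theta)) \right\rvert^2 \, d\rho \, d\theta = \int \int \Phi_R(x-y) \, d\mu(x) \, d\mu(y), \qquad \Phi_R(w) = \int_a^b \int_{1/2}^1 e^{-2\pi i R \rho \langle \gamma(\theta), w\rangle} \, d\rho \, d\theta, \]
so the kernel $\Phi_R$ is a weighted Fourier transform of the truncated cone $\{\rho\gamma(\theta):\theta\in[a,b],\ 1/2\le\rho\le1\}$, and the estimate reduces to (i) a pointwise bound on $\Phi_R$, and (ii) integrating that bound against $\mu\times\mu$ using the Frostman inequality $\mu(B(x,r))\le c_\alpha(\mu)r^\alpha$.

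Step (i) is the geometric core and the step I expect to be the main obstacle. The surface $\{\rho\gamma(\theta)\}$ is a genuinely curved cone: its Gaussian curvature vanishes identically, but $\lvert\gamma\rvert\equiv1$ together with $\det(\gamma,\gamma',\gamma'')\not\equiv0$ forces exactly one nonvanishing principal curvature, comparable to the geodesic curvature of $\gamma$ in $S^2$. Performing the $\rho$-integral first produces a factor $\min\{1,(R\lvert\langle\gamma(\theta),w\rangle\rvert)^{-1}\}$, after which a van der Corput estimate in $\theta$ — the phase $\theta\mapsto\langle\gamma(\theta),w\rangle$ having nonvanishing second derivative away from a narrow family of directions $w$ — yields a $\tfrac{1}{2}$-power gain. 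The outcome is a bound of the shape $\lvert\Phi_R(w)\rvert\lesssim_\gamma(1+R\lvert w\rvert)^{-1/2}$, with the worst case attained only when $w$ lies near the fixed two-dimensional ``normal cone'' $\mathcal{C}=\{\lambda\,\gamma(\theta)\times\gamma'(\theta):\theta\in[a,b],\ \lambda\in\mathbb{R}\}$, and with essentially rapid decay as $w$ moves away from $\mathcal{C}$. Making this uniform in $w$ near $\mathcal{C}$, and tracking the exact exponents there, is the delicate part; the hypothesis $\det(\gamma,\gamma',\gamma'')\neq0$ is precisely what prevents $\mathcal{C}$ from degenerating.

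Step (ii): decompose the double integral dyadically in $\lvert x-y\rvert$. On the near-diagonal piece $\lvert x-y\rvert\lesssim1/R$ the trivial bound $\lvert\Phi_R\rvert\lesssim1$ and Frostman give a contribution $\lesssim c_\alpha(\mu)\lVert\mu\rVert R^{-\alpha}$, i.e. the exponent $\beta=\alpha$. On the pieces $1/R\lesssim\lvert x-y\rvert=2^{-\ell}\lesssim1$ one uses $\lvert\Phi_R\rvert\lesssim(R2^{-\ell})^{-1/2}$ together with $\iint_{\lvert x-y\rvert\sim2^{-\ell}}d\mu\,d\mu\lesssim c_\alpha(\mu)\lVert\mu\rVert2^{-\ell\alpha}$, and summing the geometric series gives $\beta=\min\{\alpha,\tfrac{1}{2}\}$; combined with the diagonal term this already yields $\beta(\alpha)$ for $\alpha\in[0,1]$. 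For $\alpha\in(1,3]$ this is not sharp, and one refines the $2^{-\ell}$-piece by splitting further according to the distance of $x-y$ from $\mathcal{C}$: near $\mathcal{C}$ one still only has $(R2^{-\ell})^{-1/2}$ decay but can instead bound the measure of a thin slab around $\mathcal{C}$ via Frostman (of size $c_\alpha(\mu)\lVert\mu\rVert$ times the diameter to the power $\alpha$ when $\alpha\le2$, gaining from the thin direction when $\alpha>2$), while away from $\mathcal{C}$ the rapid decay of $\Phi_R$ makes the contribution negligible for a suitable separation scale. Optimising these competing contributions produces the break points at $\alpha=1,2$ and the values $\alpha/2$ and $\alpha-1$. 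One then takes $s<\beta(\alpha)$, absorbs logarithmic losses into $R^\epsilon$, and concludes as in the proof of Proposition~\ref{proposition}. (Step (ii) may equivalently be organised as interpolation between the trivial $L^2$ bound and the sharp oscillatory endpoint, the piecewise shape of $\beta(\alpha)$ being the resulting convex profile; one may also observe that, since $\widehat\mu(s\gamma(\theta))=\widehat\sigma(s)$ for $\sigma$ the pushforward of $\mu$ under $x\mapsto\langle\gamma(\theta),x\rangle$, \eqref{surfacebound} is equivalent to an $L^2$-Sobolev estimate for the one-dimensional restricted projection family associated to $\gamma$.)
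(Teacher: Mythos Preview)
The paper does not prove Theorem~\ref{torsionfourier}; it quotes the result from \cite{oberlin} for $\alpha>1$ and from \cite{erdogan} for $\alpha\le 1$, with a paragraph explaining how to reduce the stated form (general $\gamma$, explicit $\lVert\mu\rVert c_\alpha(\mu)$ dependence) to the localised versions proved there. So there is no ``paper's own proof'' to match, but your sketch can still be assessed against what those references actually do.

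For $\alpha\in[0,1]$ your plan is correct and is essentially Erdo\u{g}an's argument: the uniform bound $\lvert\Phi_R(w)\rvert\lesssim (1+R\lvert w\rvert)^{-1/2}$ together with dyadic Frostman gives $\beta(\alpha)=\min\{\alpha,1/2\}$.

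For $\alpha>1$ there is a genuine gap. Your refinement---splitting by the angular distance $\varepsilon$ of $x-y$ to the normal cone $\mathcal{C}$---does not recover $\alpha/2$ or $\alpha-1$. Two things go wrong. First, the decay of $\Phi_R$ away from $\mathcal C$ is \emph{not} rapid: a stationary-phase computation gives $\lvert\Phi_R(w)\rvert\sim (R\lvert w\rvert)^{-1}\varepsilon^{-1/2}$ when $\varepsilon\gtrsim (R\lvert w\rvert)^{-1}$, which is only a half-power better than on $\mathcal C$. Second, covering the $\varepsilon$-slab around the $2$-dimensional surface $\mathcal C\cap\{\lvert w\rvert\sim 2^{-\ell}\}$ by balls of radius $\varepsilon 2^{-\ell}$ gives $\mu(\text{slab})\lesssim c_\alpha(\mu)2^{-\ell\alpha}\min(1,\varepsilon^{\alpha-2})$, so for $\alpha\le 2$ there is no gain at all from the thinness. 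Carrying the sum through, the best your scheme yields is $\beta=1/2$ for $\alpha\in(1,2]$ and $\beta\le \alpha-3/2$ for $\alpha\in(2,5/2]$, strictly weaker than $\alpha/2$ and $\alpha-1$. The missing ingredient is Wolff's sharp $L^4$ square-function/restriction estimate for the cone in $\mathbb R^3$ (equivalently, sharp local smoothing in $2{+}1$ dimensions), which is what \cite{oberlin} invokes; this is genuinely deeper than any pointwise kernel bound and cannot be replaced by Frostman geometry. Concretely, one dualises the conical $L^2$ average to a weighted extension estimate $\lVert Ef\rVert_{L^2(d\mu)}$, applies H\"older to pass to $\lVert Ef\rVert_{L^4}$, and then uses Wolff's theorem---the breakpoints at $\alpha=1,2$ arise from the interplay between this $L^4$ input and the Frostman condition, not from optimising slab widths.
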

In~\cite[Theorem~1.1]{oberlin}, the dependence on $\lVert \mu\rVert$, $c_{\alpha}(\mu)$ in \eqref{surfacebound} and \eqref{nhdbound} is not made explicit, but follows from the proof in~\cite{oberlin} (more precisely, the $c_{\alpha}(\mu)$ term comes from~\cite[Lemma~3.1]{oberlin}, and the $\lVert \mu\rVert$ term comes through deducing~\cite[Eq. 3.1]{oberlin} from~\cite[Eq. 3.4]{oberlin}). In~\cite{oberlin} the left-hand side of \eqref{surfacebound} is replaced by the localised version
\[ \int_{-1/2}^{1/2} \int_{1/2}^1 \left\lvert \widehat{\mu} (R \rho (t, \phi(t), 1) ) \right\rvert^2 \, d\rho \, dt, \]
under the assumption that $\phi : [-1/2,1/2] \to \mathbb{R}$ is $C^2$ with nonvanishing first and second derivatives. The version in \eqref{surfacebound} can be deduced from the local one by localising around $t=0$, rotating so that $\gamma(0) = (0,0,1)$, $\gamma'(0) = \frac{1}{\sqrt{2}}(1,1,0)$, letting $t = \frac{\gamma_1(\theta)}{\gamma_3(\theta)}$, $\widetilde{\rho} = \rho\gamma_3(\theta)$,  $\phi(t) = \frac{\gamma_2(\theta)}{\gamma_3(\theta)}$ and making the change of variables from $(\rho, \theta)$ to $\left(\widetilde{\rho}, t \right)$. Under this change of variables, $\phi$ satisfies 
\[ \phi'(0) = \frac{\gamma_2'(0)}{\gamma_1'(0)} = 1, \quad \phi''(0) = \frac{1}{\gamma_1'(0)} \det\begin{pmatrix} \gamma(0) & \gamma'(0) & \gamma''(0) \end{pmatrix} \neq 0, \]
which verifies the nonvanishing first and second derivative conditions in a neighbourhood of $0$. The version in \eqref{nhdbound} is equivalent to the one in \eqref{surfacebound} by the uncertainty principle (the implication $\eqref{nhdbound} \Rightarrow \eqref{surfacebound}$ is shown in~\cite[Eq. 3.2]{oberlin}). Alternatively, the version in \eqref{nhdbound} is the one proved directly in~\cite[Eq. 3.1]{oberlin} (again for the localised curve $(t,\phi(t), 1)$, but the version in \eqref{nhdbound} follows similarly to \eqref{surfacebound}). 

Finally, Theorem~\ref{torsionfourier} is only stated in~\cite{oberlin} for $\alpha >1$, but the case $\alpha \leq 1$ follows from Theorem~1 in~\cite{erdogan} (see also~\cite[Theorem~3.8]{mattila}), whose proof is much simpler than the $\alpha >1$ case. The assumptions of~\cite[Theorem~1]{erdogan} are satisfied with $a = 1/2$ and $b=2$ by rotation invariance and the van der Corput lemma.  

\begin{proof}[Proof of Theorem~\ref{curvature}] Assume $\Omega = (a,b)$, and let $\gamma: [a,b] \to S^2$ be a $C^2$ curve with nonvanishing geodesic curvature $\det\left(\gamma,\gamma',\gamma''\right)$. Without loss of generality assume that $\gamma$ has unit speed. Let $F = \gamma'$ and $G = \gamma \times \gamma'$, so that $\gamma^{\perp} = \spn\{ F, G \}$. Differentiating both sides of $\langle \gamma, \gamma' \rangle =0$ gives $\langle \gamma, \gamma'' \rangle = -1$, and hence 
\[ \rank\begin{pmatrix} G & DF \end{pmatrix} = \rank\begin{pmatrix} \gamma \times \gamma' & \gamma'' \end{pmatrix} = 2. \]
For the other condition in \eqref{rankassumption},
\begin{equation} \label{rank1} DG = (\gamma \times \gamma')' = \gamma \times \gamma'' = -\det\begin{pmatrix} \gamma & \gamma' & \gamma'' \end{pmatrix} \gamma'. \end{equation}
The last equality can be seen by expanding out $\gamma \times \gamma''$ in the basis $\gamma, \gamma', \gamma \times \gamma'$. In this expansion, the coefficient of $\gamma$ is clearly zero, and differentiating $\langle \gamma \times \gamma', \gamma \times \gamma' \rangle$ shows that the coefficient of $\gamma \times \gamma'$ is zero. The coefficient of $\gamma'$ can then be found through the scalar triple product formula $\det\left( a, b, c \right) = \langle a, b \times c \rangle$.  Eq. \eqref{rank1} implies that  
\[ \rank\begin{pmatrix} F & DG \end{pmatrix} = \rank\begin{pmatrix} \gamma' & \gamma \times \gamma'' \end{pmatrix} = 1=  \rank \gamma \times \gamma'' = \rank DG.  \]
This automatically implies \eqref{cdn2}.
% and applying Theorem~\ref{proposition} with $d=2$ then gives 
%\[ \dim \pi_{\theta} (B) \geq \min\left\{2, \dim B, \beta(\dim B) + \frac{1}{2} \right\}  \quad \text{for a.e.~$\theta \in [0,1]$,} \]
%and if $\min\left\{ \dim B, \beta(\dim B) + \frac{1}{2}\right\} >2$, then $\mathcal{H}^2(\pi_{\theta}(B)) >0$ for a.e.~$\theta \in [0,1]^{d-1}$.
By~\cite[Lemma~3.2]{fasslerorponen14}, the curve $\gamma \times \gamma'$ has nonvanishing geodesic curvature since $\gamma$ has nonvanishing geodesic curvature. Substituting the value of $\beta(\alpha)$ from Theorem~\ref{torsionfourier} and applying Proposition~\ref{proposition} with $d=2$ therefore gives \eqref{energyone} and \eqref{energytwo}.
%gives
%\[ \begin{aligned}  \dim \pi_{\theta} (B) &= \dim B, \quad && \dim B \in [0,1]  \\
%\dim \pi_{\theta} (B) &\geq \frac{\dim B +1}{2}, \quad && \dim B \in (1,2]  \\
%\dim \pi_{\theta} (B) &\geq \dim B - \frac{1}{2}, \quad && \dim B \in (2,5/2]  \\
%\mathcal{H}^{2}(\pi_{\theta} (B)) &>0,  \quad && \dim B > 5/2, \end{aligned} \]
%for a.e.~$\theta \in [a,b]$. This proves Theorem~\ref{curvature}. 
\end{proof}

\subsection{Weighted Fourier restriction for the cone, and projections onto 2-dimensional planes in higher dimensions}

For $d \geq 2$ let $\Gamma^d$ be the $d$-dimensional truncated cone in $\mathbb{R}^{d+1}$:
\[ \Gamma^d := \left\{ (\xi,\lvert\xi\rvert) \in \mathbb{R}^d \times \mathbb{R} : \frac{1}{2} \leq \lvert\xi\rvert \leq 1 \right\}. \]
For $d=2$, $\Gamma^2 \cap S^2 = \left\{ \frac{1}{\sqrt{2}}(\cos\theta, \sin \theta, 1) : \theta \in [0,2\pi) \right\}$ is the curve from before, which serves as the model example of a curve on the sphere $S^2 \subseteq \mathbb{R}^3$ with strictly positive geodesic curvature. For $d \geq 2$, $\Gamma^d \cap S^d$ is a codimension 1 submanifold of $S^d$ with second fundamental form corresponding to a constant multiple of the identity matrix at every point, and it therefore serves as a simple example of a codimension 1 submanifold of $S^d$ which is ``positively curved''; meaning the second fundamental form is everywhere strictly positive definite. A restricted family of projections parametrised by $\Gamma^d \cap S^d$ will be constructed here using vector fields on the sphere $S^{d-1}$. 

A function $F: S^{d-1} \to \mathbb{R}^d$ is called a vector field on $S^{d-1}$ if $\langle F(x), x \rangle =0$ for every $x \in S^{d-1}$. A vector field $F: S^{d-1} \to \mathbb{R}^d$ on $S^{d-1}$ is called linear if there exists a $d \times d$ matrix $A$ such that $F(x) = Ax$ for every $x \in S^{d-1}$, and $F$ is called a unit vector field if $\lvert F\rvert \equiv 1$. A $d \times d$ matrix $A$ corresponds to a linear vector field on $S^{d-1}$ if and only if $A^*=-A$.  It is well known that there are no nonvanishing continuous vector fields on $S^{d-1}$ if $d$ is odd. Assume then that $d$ is even, fix a linear unit vector field $A$ on $S^{d-1}$ (e.g. multiplication by $i$), and for $v \in S^{d-1}$ let $\pi_v = \pi_{v,A}$ be the orthogonal projection onto the $2$-dimensional plane
\begin{equation} \label{kplanes} \spn \left\{ \frac{1}{\sqrt{2}}(v,-1), ( Av,0) \right\} \subseteq \mathbb{R}^{d+1}. \end{equation}
This family forms a $(d-1)$-dimensional submanifold of the $2(d-1)$-dimensional Grassmannian $\gr(d+1,2)$. If $d=2$, the family in \eqref{kplanes} parametrises the planes orthogonal to $\frac{1}{\sqrt{2}}\left( \cos \theta, \sin \theta, 1 \right)$, so this generalises the projection family occurring in Theorem~\ref{neat}.\end{sloppypar}

\begin{proposition} \label{projection} Fix  an even integer $d \geq 4$.  For any analytic subset $B$ of $\mathbb{R}^{d+1}$,
\[ \begin{aligned}  \dim \pi_v (B) &= \dim B, && \dim B \in [0,2] \\
\mathcal{H}^{2}(\pi_v (B)) &>0, &&\dim B > 2, \end{aligned} \]
for $\mathcal{H}^{d-1}$-a.e.~$v \in S^{d-1}$. \end{proposition}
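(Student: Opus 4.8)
The plan is to deduce Proposition~\ref{projection} from the general Proposition~\ref{proposition} in exactly the same way that Theorem~\ref{curvature} was deduced, namely by exhibiting the projection family \eqref{kplanes} as one of the families $\pi_y$ covered by Proposition~\ref{proposition} and then invoking a weighted Fourier restriction estimate for the cone $\Gamma^d$ to compute (a lower bound for) the exponent $\beta_G(\alpha)$. Since $d$ is even and $A$ is a linear unit vector field on $S^{d-1}$, the maps $F(v) = (Av, 0)$ and $G(v) = \tfrac{1}{\sqrt{2}}(v,-1)$ take values in $S^d$; after fixing a local parametrisation of $S^{d-1}$ by a bounded open set $\Omega \subseteq \mathbb{R}^{d-1}$ one regards $F,G$ as elements of $C^2(\overline{\Omega}, S^d)$ (indeed real-analytic, since $A$ is linear). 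The first step is to verify the structural hypotheses \eqref{rankassumption} and \eqref{cdn2} of Proposition~\ref{proposition} for this pair $(F,G)$.

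For \eqref{rankassumption}: since $\langle F, G\rangle = \tfrac{1}{\sqrt 2}\langle Av, v\rangle = 0$ (as $A^* = -A$), the orthogonality condition holds. The derivative $DG$ is, up to the constant $\tfrac{1}{\sqrt2}$, the inclusion of the tangent space $T_vS^{d-1}$ into $\mathbb{R}^d\times\{0\}$, so $\rank DG \equiv d-1$, and since $F(v) = (Av,0)$ is tangent to $S^{d-1}$ at $v$ (again because $A^*=-A$), $F$ lies in the column space of $DG$, giving $\rank\begin{pmatrix} F & DG\end{pmatrix} \equiv d-1$ — this is precisely the relation ``$F$ is a derivative of $G$'' used in the proof of Proposition~\ref{proposition}. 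For the condition $\rank\begin{pmatrix} G & DF\end{pmatrix}\equiv d$: the columns of $DF$ span $A(T_vS^{d-1})$ inside $\mathbb{R}^d\times\{0\}$, which together with $G(v)$ (which has nonzero last coordinate) will span a $d$-dimensional space provided $v\notin A(T_vS^{d-1})$, i.e. $v$ is not $A$-orthogonal to itself — but $\langle Av, v\rangle \ne \langle v,v\rangle = 1$ forces $v\notin A(T_vS^{d-1})$ only after a short linear-algebra check using $\dim A(T_vS^{d-1}) = d-1$ and $A$ invertible; this is the one place needing a little care. As noted in the Remark after Proposition~\ref{proposition}, once $F$ is a derivative of $G$ and these rank conditions hold, \eqref{cdn2} follows automatically (the determinant in \eqref{cdn2} is, up to sign and the constant from $DG$, $\det\begin{pmatrix} F & G & DF\end{pmatrix}$, which is nonvanishing by the rank hypothesis and compactness).

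The second step is to identify $\Gamma(G)$: since $G(v) = \tfrac{1}{\sqrt2}(v,-1)$ ranges over $\Gamma^d\cap S^d$ (the backward sheet of the unit-normalised cone) as $v$ ranges over $S^{d-1}$, the set $\Gamma(G)$ is (an affine copy of) a piece of the truncated cone $\Gamma^d$, and $\beta_G(\alpha)$ is governed by a weighted $L^2$ restriction / Fourier-decay estimate for the cone of the form \eqref{nhdbound2}. One then invokes the known weighted Fourier restriction bound for the $d$-dimensional cone — this is the cone analogue of Theorem~\ref{torsionfourier}; the relevant estimate gives $\beta_{\Gamma^d}(\alpha) \ge \min\{\alpha, (\alpha+1)/2\}$ or better for $\alpha \le 2$ (this is what is needed to get $\dim\pi_v(B) \ge \min\{2,\dim B\}$), and in fact for the cone the relevant $L^2$ bound is clean enough — following e.g. the cone decoupling or the direct argument in the cited literature — that $\beta_{\Gamma^d}(\alpha) \ge \alpha - \tfrac12$ for $\alpha$ near $2$ and $\beta_{\Gamma^d}(\alpha) + \tfrac12 \ge \alpha$ exactly when $\beta_{\Gamma^d}(\alpha)\ge\alpha-\tfrac12$; the point is that for all $\alpha \in [0,2]$ one has $\min\{\alpha, \beta_{\Gamma^d}(\alpha)+\tfrac12\} = \alpha$, and for $\alpha > 2$ one has $\min\{\alpha, \beta_{\Gamma^d}(\alpha)+\tfrac12\} > 2$. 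I would cite the appropriate statement from \cite{oberlin} (or \cite{erdogan}, \cite{demeter}) for the cone in $\mathbb{R}^{d+1}$ rather than reprove it.

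Finally, with these inputs, Proposition~\ref{proposition} applied with this $(F,G)$ yields: for $\dim B \in [0,2]$, $\dim\pi_v(B) \ge \min\{2, \dim B, \beta_G(\dim B)+\tfrac12\} = \dim B$ for a.e.\ $v$, and the reverse inequality $\dim\pi_v(B)\le\dim B$ is trivial since $\pi_v$ is Lipschitz; combining gives equality. For $\dim B > 2$, $\min\{\dim B, \beta_G(\dim B)+\tfrac12\} > 2$, so Proposition~\ref{proposition} gives $\mathcal{H}^2(\pi_v(B)) > 0$ for a.e.\ $v$. Transferring ``a.e.\ $y\in\Omega$'' to ``$\mathcal{H}^{d-1}$-a.e.\ $v\in S^{d-1}$'' is routine since $S^{d-1}$ is covered by finitely many such charts $\Omega$ with the chart maps and their inverses Lipschitz. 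The main obstacle I expect is pinning down the precise weighted Fourier restriction exponent for the higher-dimensional cone and confirming it is strong enough across the whole range $\alpha\in(0,2]$ to yield the sharp conclusion $\dim\pi_v(B) = \dim B$ (as opposed to a weaker lower bound); the verification of the algebraic conditions \eqref{rankassumption}–\eqref{cdn2}, while requiring attention, should be straightforward linear algebra once one uses $A^* = -A$ and the invertibility of $A$.
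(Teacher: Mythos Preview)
Your overall plan is correct and matches the paper's: set $F(v)=(Av,0)$, $G(v)=\tfrac{1}{\sqrt{2}}(v,-1)$, verify the hypotheses of Proposition~\ref{proposition}, and invoke the known decay exponent $\beta(\alpha,\Gamma^d)$ for the cone. Your verification of the rank conditions is essentially right, though the aside about ``$v\notin A(T_vS^{d-1})$'' is a red herring: $\rank\begin{pmatrix}G & DF\end{pmatrix}=d$ follows immediately because every column of $DF$ has vanishing last coordinate while $G$ does not.

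The genuine gap is your treatment of \eqref{cdn2}. The claim that it ``follows automatically'' from the rank hypotheses is false for $d\geq 4$, and the Remark after Proposition~\ref{proposition} does not say this. In the proof of Theorem~\ref{curvature} ($d=2$) it \emph{is} automatic, but only because there $DG$ is a single column equal to a scalar multiple of $F$; one subtracts that multiple of the first column and obtains $\det\begin{pmatrix}F&G&DF\end{pmatrix}$. When $d\geq 4$ the matrix $DG$ has $d-1\geq 3$ columns; the condition $\rank\begin{pmatrix}F&DG\end{pmatrix}=d-1$ says $F$ lies in the column span of $DG$, not the other way around, so no column operation eliminates the $\lambda DG$ term. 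The paper establishes \eqref{cdn2} by a genuinely different argument: it compares two expressions for $\int g$ after the change of variables $(\lambda_1,\lambda_2,y)\mapsto \lambda_1 F(y)+\lambda_2 G(y)$, exploiting the rotation invariance $\lambda_1 Av+\lambda_2 v\in (\lambda_1^2+\lambda_2^2)^{1/2}S^{d-1}$ to compute the Jacobian indirectly. An alternative direct computation also works: expanding along the last row and using that $A$ is a linear unit vector field with $A^*=-A$ forces $A^2=-I$, whence $(A+\mu I)^{-1}=(\mu I-A)/(1+\mu^2)$ and one finds
\[
\bigl|\det\begin{pmatrix}F&G&DF+\lambda DG\end{pmatrix}\bigr|\sim (1+\lambda^2)^{(d-2)/2}\,\bigl|\det(v,\partial_1 v,\dots,\partial_{d-1}v)\bigr|,
\]
which is uniformly bounded below. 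Either way, this step requires work you have not supplied.

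On the Fourier side, the input you need (and what the paper uses) is simply Mattila's bound $\beta(\alpha,\Gamma^d)\geq\alpha$ for $\alpha\leq(d-1)/2$ together with monotonicity in $\alpha$; since $d\geq 4$ this gives $\beta(\alpha,\Gamma^d)+\tfrac12\geq\min\{2,\alpha\}$ for every $\alpha$, with strict inequality off $\alpha=2$, which is exactly what Proposition~\ref{proposition} requires.
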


%The proof of Proposition~\ref{projection} given here uses results from weighted Fourier restriction on the cone. 
The Peres-Schlag projection theorem does not imply Proposition~\ref{projection}, since the family of projections fails their transversality condition (see Appendix~\ref{appendix}). Moreover, Proposition~\ref{projection} is not implied (at least not for all possibilities) by the results of~\cite{jarvenpaa1,jarvenpaa2} for more general families of planes without curvature assumptions. I don't know if it is possible to prove Proposition~\ref{projection} directly via sublevel set estimates. If an analogous projection family $\pi_v$ existed for $d=3$, then Fourier analytic method here would give the sharp projection theorem, and this would not (trivially) follow from sublevel set estimates. If the number of vector fields is increased so that the projections are onto $k$-dimensional planes with $k \geq 3$, e.g. $\spn \left\{ \frac{1}{\sqrt{2}}(v,-1), ( A_1v,0), ( A_2v,0) \right\}$, then the sharp bound can be proved by a simple change of variables, without Fourier restriction.

For $\alpha \in [0,d+1]$ let $\beta\big(\alpha, \Gamma^d\big)$ be the supremum over all $\beta \geq 0$ satisfying
\[ \int_{\Gamma^d} \left\lvert \widehat{\mu}(R\xi) \right\rvert^2 \, d\sigma_{\Gamma}(\xi) \lesssim_{\beta} c_{\alpha}(\mu) \mu\big(\mathbb{R}^{d+1}\big)  R^{-\beta}, \]
for all $R>0$ where $\sigma_{\Gamma}$ is the surface measure on the truncated cone $\Gamma^d$. By ignoring constant factors, $\sigma_{\Gamma}$ is essentially equal to the pushforward of the Lebesgue measure on $B(0,1) \setminus B(0,1/2)$ under $\xi \mapsto (\xi,\lvert \xi\rvert)$.

For $d \geq 4$, the currently known lower bound on $\beta\big(\alpha,\Gamma^d\big)$ is
\begin{equation} \label{betabound} \beta\big(\alpha,\Gamma^d\big) \geq \left\{ \begin{aligned} 
&\alpha,														&												& \alpha \in \left[ 0, \frac{d-1}{2} \right] 						 &&\text{(\cite{mattila})}, \\
&\alpha - \frac{1}{2} \left( \alpha - \frac{(d-1)}{2} \right), & &\alpha \in \left( \frac{d-1}{2}, \frac{d+1}{2} \right] && \text{(\cite{cho})}, \\
&\alpha-1 + \frac{d-\alpha}{d-1}, 		&													& \alpha \in \left(\frac{d+1}{2}, d \right]						 &&\text{(\cite{harris})}, \\
&\alpha-1,													&												&\alpha \in \left( d, d+1 \right] 											 &&\text{(\cite{sjolin})}. \end{aligned} \right. \end{equation}
Only the first bound, due to Mattila, will be used here. The same lower bound holds and is sharp for $d=3$; this is due entirely to~\cite{cho} (the lower bound $\beta\big(\alpha,\Gamma^d\big) \geq \alpha-1$ holds for any $d \geq 2$ as a straightforward consequence of duality and Plancherel~\cite{erdogan,cho}).
\begin{proof}[Proof of Proposition~\ref{projection}] Let $B \subseteq \mathbb{R}^{d+1}$ be an analytic set. Let $\phi:B_{d-1}(0,1) \to S^{d-1}$ be the map
\[ y \mapsto \left(y, \sqrt{1-\lvert y\rvert^2} \right). \]
For the first part define $\widetilde{G}, \widetilde{F}: S^{d-1} \to S^d$ by
\[ \widetilde{G}(v) =  \frac{1}{\sqrt{2}}(v,-1), \quad \widetilde{F}(v) =  (Av,0),  \]
 Define $G,F: B_{d-1}(0,1/2) \to S^d$ by $G = \widetilde{G} \circ \phi$ and $F = \widetilde{F} \circ \phi$, so that 
\[ G(y) = \frac{1}{\sqrt{2}}\left(y, \sqrt{1-\lvert y\rvert^2}, -1 \right) \in S^d \subseteq \mathbb{R}^{d-1} \times \mathbb{R} \times \mathbb{R}, \]
and 
\[ F(y) = \left(A \begin{pmatrix} y \\ \sqrt{1-\lvert y\rvert^2} \end{pmatrix}, 0 \right) \in S^d \subseteq \mathbb{R}^{d-1} \times \mathbb{R} \times \mathbb{R}. \]
Then $F$ is a diffeomorphism onto its $(d-1)$-dimensional image, and therefore $\rank DF \equiv d-1$. Hence 
\[ \rank \begin{pmatrix} G & DF \end{pmatrix}  = d, \]
since the last entry of $G$ is everywhere nonzero. The derivative of $G$ is the $(d+1) \times (d-1)$ matrix 
\[ DG(y) = \frac{1}{\sqrt{2}}\begin{pmatrix} I_{d-1} \\ \frac{-y}{\sqrt{1-\lvert y\rvert^2}} \\ 0 \end{pmatrix}. \]
 Moreover, 
\[ F(y) = \sqrt{2} \sum_{j=1}^{d-1} (Ay)_j \frac{ \partial G}{\partial y_j }. \]
This can be checked by comparing with the formula for $DG(y)$; the only nontrivial equality to check is in the $d$-th coordinate, which holds since $A$ is a vector field on $S^{d-1}$. This implies that
\[ \rank\begin{pmatrix} F & DG \end{pmatrix} \equiv \rank DG = d-1, \]
which verifies \eqref{rankassumption}. 

For \eqref{cdn2}, let $U$ be the set $(1,\infty) \times \mathbb{R} \times B_{d-1}(0,1/2)$ and define $H: U \to \mathbb{R}^{d+1}$ by $H(\lambda_1, \lambda_2, y ) = \lambda_1 F(y) + \lambda_2G(y)$. Then for any positive function $g \in L^1\left(\mathbb{R}^{d+1}\right)$ supported in $H(U)$,
\begin{align*} &\int_{\mathbb{R}^{d+1}} \left( \lambda_1^2 + \lambda_2^2 \right)^{\frac{d-2}{2}} (g \circ H)(\lambda_1, \lambda_2, y)  \, d\lambda_1 \, d\lambda_2 \, dy  \\
&\quad =\int_1^{\infty} \int_{-\infty}^{\infty} \int_{B_{d-1}(0,1/2)} \left( \lambda_1^2 + \lambda_2^2 \right)^{\frac{d-2}{2}} (g \circ H)(\lambda_1 \lambda_2, y) \, d\lambda_1 \, d\lambda_2 \, dy  \\
&\quad \lesssim \int_1^{\infty} \int_{-\infty}^{\infty} \int_{S^{d-1}}  \left( \lambda_1^2 + \lambda_2^2 \right)^{\frac{d-2}{2}} g \begin{pmatrix} \lambda_1 Av + \lambda_2 v \\ -\lambda_2 \end{pmatrix} \, d\sigma(v) \, d\lambda_1 \, d\lambda_2 \\
&\quad = \int_1^{\infty} \int_{-\infty}^{\infty} \int_{S^{d-1}}  \left( \lambda_1^2 + \lambda_2^2 \right)^{\frac{d-2}{2}} g \begin{pmatrix} \left(\lambda_1^2 +\lambda_2^2\right)^{1/2} v \\ -\lambda_2 \end{pmatrix} \, d\sigma(v) \, d\lambda_1 \, d\lambda_2 \\
&\quad \lesssim \int_{-\infty}^{\infty} \int_{\lvert \lambda_2\rvert}^{\infty} \int_{S^{d-1}} r^{d-1} g \begin{pmatrix} r v \\ -\lambda_2 \end{pmatrix} \, d\sigma(v) \, dr \, d\lambda_2  \\
&\quad\lesssim \int_{\mathbb{R}^{d+1}} g(\xi) \, d\xi  \\
&\quad= \int_{\mathbb{R}^{d+1}} \left\lvert \det DH (\lambda_1, \lambda_2, y ) \right\rvert (g \circ H)(\lambda_1, \lambda_2, y)  \, d\lambda_1 \, d\lambda_2 \, dy. \end{align*} 
It follows that 
\begin{multline*} \left\lvert \det DH (\lambda_1, \lambda_2, y ) \right\rvert  = \left\lvert\det\begin{pmatrix} F(y) & G(y) & \lambda_1DF(y) + \lambda_2 DG(y) \end{pmatrix}\right\rvert \\
\gtrsim \left(\lambda_1^2 + \lambda_2^2 \right)^{\frac{d-2}{2}}, \end{multline*} 
for all $\lambda_1 \geq 1$, $\lambda_2 \in \mathbb{R}$ and $y \in B_{d-1}(0,1/2)$. Setting $\lambda_1 = 1$ gives 
\[ \min_{\substack{\lambda \in \mathbb{R} \\
y \in B_{d-1}(0,1/2)  }}\left\lvert\det\begin{pmatrix} F(y) & G(y) & DF(y) + \lambda DG(y) \end{pmatrix}\right\rvert >0,  \]
which verifies \eqref{cdn2} with $\Omega =B_{d-1}(0,1/2)$. 

Applying Proposition~\ref{proposition} and using symmetry gives
\[  \dim \pi_v (B) \geq \min\left\{2, \dim B, \beta\big(\dim B, \Gamma^d\big) + \frac{1}{2} \right\}  \quad \text{for $\mathcal{H}^{d-1}$-a.e.~$v \in S^{d-1}$,} \]
and $\mathcal{H}^{2}(\pi_v (B)) >0$ for $\mathcal{H}^{d-1}$-a.e.~$v \in S^{d-1}$ if $\min\left\{\dim B, \beta\big(\dim B, \Gamma^d\big) + \frac{1}{2} \right\}>2$. Since $d \geq 4$, the lower bound for $\beta\left( \cdot, \Gamma^d \right)$ in \eqref{betabound} shows that
\[ \beta\big(\dim B, \Gamma^d\big) + \frac{1}{2} \geq \min\{2,\dim B\}, \]
with strict inequality  if $\dim B \neq 2$. Hence
\[ \begin{aligned}  \dim \pi_v (B) &= \dim B, && \dim B \in [0,2] \\  
\mathcal{H}^{2}(\pi_v (B)) &>0, && \dim B > 2, \end{aligned} \]
for $\mathcal{H}^{d-1}$-a.e.~$v \in S^{d-1}$. This proves Proposition~\ref{projection}.  \end{proof}
\appendix
\section{Proof that the Oberlin-Oberlin inequality implies \texorpdfstring{Lemma~\ref{venierilemma}}{lemma} when \texorpdfstring{$s\geq 9/4$}{s exceeds 2.25}} \label{appendix2}
Before considering the case $s \geq 9/4$, the $\nu\left(\mathbb{R}^3\right)$ factor in Lemma~\ref{venierilemma} will briefly be explained in the case $s < 9/4$. In~\cite[p.~12]{venieri} an inequality of the form 
\[ \nu^4\left\{ (z,z_1, z_2, z_3) \in Z' \times B_0^3 : z_j \sim_j z \text{ for all } 1 \leq j \leq 3 \right\} \lessapprox t^{2s} \delta^s, \]
is obtained by taking a $\nu^3$ triple integral of the bound
\[ \nu\left\{ z \in Z' : z_j \sim_j z \text{ for all } 1 \leq j \leq 3 \right\} \lessapprox \delta^s, \quad z_1, z_2, z_3 \in B_0. \]
The inner two integrals in $z_2, z_3$ apply the $s$-Frostman condition to the inequalities $\lvert z_2-z_1\rvert, \lvert z_3-z_1\rvert \lesssim t$ for fixed $z_1$, whilst the remaining $\nu$-integral over $z_1$ is 1 since $\nu$ is a probability measure. In the case where $\nu$ is not a probability measure, this last bound gives a factor of $\nu\left(\mathbb{R}^3\right)$ whilst the other steps remain unchanged. 

\begin{proof}[Proof of Lemma~\ref{venierilemma} for $s \geq 9/4$] Fix 
\[ \kappa > \left\{ 0, \min\left\{ \frac{2s}{3} -1 , \frac{1}{2} \right\} \right\} = \frac{1}{2}, \]
since $s \geq 9/4$. All sets occurring in the statement of the lemma are measurable; see e.g.~\cite{harris2}. Let $\eta>0$ be a small constant to be chosen, and let 
\[ Z= Z(\delta) = \left\{ y \in \mathbb{R}^3: \mathcal{H}^1\left\{ \theta \in [0,2\pi) : \pi_{\theta\#} \nu\left( B\left( \pi_{\theta}(y), \delta \right)\right) > \delta^{s-\kappa} \right\} \geq \delta^{\eta} \right\}. \]
It is required to show that $\nu(Z) \leq \nu\left(\mathbb{R}^3\right)\delta^{\eta}$ for all $\delta$ sufficiently small. Let $t = s-\kappa + 100\eta$. By three applications of Chebychev's inequality,
\begin{align*} \nu(Z) &\leq \int_{\mathbb{R}^3} \delta^{-\eta} \int_0^{2 \pi} \frac{ \left(\pi_{\theta \#} \nu\right)\left( B\left(\pi_{\theta}(y), \delta \right) \right) }{\delta^{s-\kappa} } \, d\theta \, d\nu(y) \\
&= \delta^{-(s-\kappa+\eta) } \int_0^{2 \pi} \int_{\mathbb{R}^3 } \int_{ \left\{x \in \mathbb{R}^3 : \left\lvert \pi_{\theta}(x-y) \right\rvert < \delta \right\}} \, d\nu(x) \, d\nu(y) \, d\theta \\
&\leq \delta^{99 \eta } \int_0^{2 \pi} I_t\left(\pi_{\theta\#} \nu \right) \, d\theta \\
&\leq \delta^{\eta}\nu\left( \mathbb{R}^3 \right), \end{align*}
where the last inequality is from~\cite{oberlin} (see also Theorem~\ref{curvature}), and holds provided $\eta$ is chosen small enough to ensure that $s-\kappa+100\eta < s-1/2$, and $\delta_0$ is chosen small enough (after $\eta$). \end{proof} 

\section{Failure of transversality} \label{appendix}
Peres and Schlag's Theorem~4.8 from~\cite{peres} gives a projection theorem for a general family of projections satisfying a transversality condition. Here it will be shown that this transversality condition is not satisfied by the family of projections occurring in Proposition~\ref{projection}. Let $Q \subseteq \mathbb{R}^{d-1}$ be an open connected set. Let $v: Q \to S^{d-1}_+$ parametrise the open upper hemisphere smoothly. Define $\Pi: Q \times \mathbb{R}^{d+1} \to \mathbb{R}^2$ by 
\[ \Pi(\lambda, x) = \begin{pmatrix} \left\langle x, \frac{1}{\sqrt{2}} \begin{pmatrix} v(\lambda) \\ -1 \end{pmatrix} \right\rangle  \\
\left\langle x, \begin{pmatrix} Av(\lambda) \\ 0 \end{pmatrix} \right\rangle \end{pmatrix}. \]
For distinct $x,y \in \mathbb{R}^{d+1}$, let
\[ \Phi_{\lambda}(x,y) = \frac{ \Pi(\lambda, x) - \Pi(\lambda, y) }{\lvert x-y\rvert} = \Pi\left(\lambda, \frac{x-y}{\lvert x-y\rvert} \right) = \Pi\left(\lambda, z\right), \quad z := \frac{x-y}{\lvert x-y\rvert}. \] 
Write $x=(x',x_{d+1}) \in \mathbb{R}^d \times \mathbb{R}$, so that for fixed $x$, 
\[ \partial_{\lambda_i} \Pi(\lambda, x)= \begin{pmatrix} \frac{1}{\sqrt{2}}  \langle x', \partial_i v(\lambda) \rangle \\
\langle -Ax', \partial_i v(\lambda) \rangle \end{pmatrix}. \]
The derivative of $\Phi_{(\cdot)}(x,y)$ with respect to $\lambda \in Q$ is therefore the $2 \times (d-1)$ matrix
\[ D_{\lambda} \Phi_{\lambda}(x,y)  = \begin{pmatrix} \frac{1}{\sqrt{2}} \langle z', \partial_1 v(\lambda) \rangle & \cdots & \frac{1}{\sqrt{2}} \langle z', \partial_{d-1} v(\lambda) \rangle  \\
\langle -Az', \partial_1 v(\lambda)\rangle & \cdots & \langle -Az', \partial_{d-1} v(\lambda) \rangle \end{pmatrix}. \]
The $\beta$-transversality condition is: for any fixed compact $\Omega \subseteq \mathbb{R}^{d+1}$, there exists a constant $C_{\beta}>0$ such that for all for all $\lambda \in Q$ and all distinct $x,y \in \Omega$, the condition
\begin{equation} \label{condition1} \lvert \Pi\left(\lambda, z\right)\rvert\leq C_{\beta}\lvert x-y\rvert^{\beta}, \end{equation}
implies that
\begin{equation} \label{condition2} \det\left(D_{\lambda}\Phi(x,y) \left[D_{\lambda}\Phi(x,y) \right]^*\right) \geq C_{\beta}^2 \lvert x-y\rvert^{2\beta}. \end{equation}
This definition is from~\cite{peres,falconer}; the one in~\cite{peres} has an extraneous condition (probably a typo). 

Let $\Omega$ be the closed unit ball centred at the origin, and suppose for a contradiction that the $\beta$-transversality condition is satisfied for some positive constant $C_{\beta}$, for some $\beta \geq 0$. Fix any $\lambda \in Q$ and define $x,y \in \Omega$ by
\[ x = \frac{1}{2\sqrt{2}}\begin{pmatrix} v(\lambda) \\ 1 \end{pmatrix}= -y, \quad \text{so that} \quad x-y = \frac{1}{\sqrt{2}}\begin{pmatrix} v(\lambda) \\ 1 \end{pmatrix}. \]
The left-hand side of \eqref{condition1} vanishes. Since the tangent space to any point on $S^{d-1}$ is the plane orthogonal to that point, all entries in the top row of $D_{\lambda}\Phi(x,y)$ are zero. Hence the $2 \times 2$ matrix $D_{\lambda}\Phi(x,y)\left[D_{\lambda}\Phi(x,y) \right]^*$ has at most 1 nonzero entry, and its determinant vanishes. Therefore \eqref{condition2} contradicts the assumption that $C_{\beta}$ is positive.

\end{document}